\theoremstyle{plain}
\newtheorem{theorem}{Theorem}
\newtheorem{lemma}{Lemma}
\newtheorem{proposition}{Proposition}
\newtheorem{corollary}{Corollary}
\theoremstyle{definition}
\newtheorem{definition}{Definition}
\newtheorem{example}{Example}
\theoremstyle{remark}
\newtheorem{remark}{Remark}
\newtheorem{assumption}{Assumption}
\DeclareRobustCommand{\gobblefive}[5]{}
\definecolor{darktaupe}{rgb}{0.24, 0.08, 0.08}
\definecolor{blueN}{rgb}{0.00, 0.75, 1.00}
\newcommand{\proofpart}[2]{%
  \par
  \addvspace{\medskipamount}%
  \noindent\emph{Part #1: #2}\par\nobreak
  \addvspace{\smallskipamount}%
  \@afterheading
}
\newcommand{\R}{\mathbb R}
\newcommand{\cF}{\mathcal F}
\newcommand{\cS}{\mathcal S}
\newcommand{\eps}{\varepsilon}
\title[Random Spectral Radius for a Matrix Family]{Probabilistic Analysis of the Random Spectral Radius for a Matrix Family}
\author[F.P. Maiale, A. Trofimova, N. Guglielmi]{Francesco Paolo Maiale, Anastasiia Trofimova, Nicola Guglielmi}
\address {Francesco Paolo Maiale \newline \indent
	Gran Sasso Science Institute \newline \indent
	Viale Rendina 26--28, L'Aquila, 67100, Italy}
\email{francescopaolo.maiale@gssi.it}
\address {Anastasiia Trofimova \newline \indent
	Gran Sasso Science Institute \newline \indent
	Viale Rendina 26--28, L'Aquila, 67100, Italy}
\email{anastasiia.trofimova@gssi.it}
\address {Nicola Guglielmi \newline \indent
	Gran Sasso Science Institute \newline \indent
	Viale Rendina 26--28, L'Aquila, 67100, Italy}
\email{nicola.guglielmi@gssi.it}
\begin{document}

\subjclass[2020]{AMS class: 93D40, 60F05, 60B15, 65C20}
\keywords{joint spectral characteristics, random spectral radius, product semigroup, stability of switching systems}

\begin{abstract}
We investigate joint spectral characteristics of a family of matrices $\mathcal F $, associated with products in the semigroup generated by $\mathcal F$. In the literature, extremal measures such as the well-known joint spectral radius and the lower spectral radius have been extensively studied. However, these measures fail to capture the typical growth rate of matrix products, focusing instead on the worst and best-case scenarios. Nevertheless, when examining, for instance, a switching dynamical system, a probabilistic rate of growth, which characterizes typical trajectories, emerges as a highly intriguing and significant measure.

In this article, we study the \textbf{random spectral radius}, defined as the spectral radius of a length-$n$ product sampled at random from the semigroup according to a given probability measure. We establish asymptotic results, namely a Law of Large Numbers and a Central Limit Theorem, for diagonal (equivalently, commuting), upper- or lower-triangular, and small perturbations of diagonal matrices. Beyond recovering the correct scaling, we obtain exact closed-form expressions for the limiting value and variance, which may be useful in numerical applications requiring precise constants. In the coalescence regime, where the leading eigenvalues merge, the limiting distribution is non-Gaussian: it is given by the maximum of a correlated Gaussian vector with explicit covariance structure. This phenomenon governs phase transitions between distinct growth regimes in switching systems.
\end{abstract}

\maketitle
\setcounter{tocdepth}{1}

\section{Introduction} 

The study of the \emph{joint spectral characteristics} of a bounded family of matrices 
\[
\cF = \bigl\{ A(\vartheta) \bigr\}_{\vartheta \in \Theta}
\]
(with $\Theta$ a $p$-dimensional compact set) concerns the spectral radii of all finite products
\[
P = A_{\vartheta_k} A_{\vartheta_{k-1}} \cdots A_{\vartheta_1}, \qquad \vartheta_1, \ldots, \vartheta_k \in \Theta, \quad k \ge 1
\]
which generate the \emph{product semigroup} associated with $\mathcal{F}$. In most cases $\mathcal{F}$ is assumed to be finite, in which case one can identify $\Theta$ as $\{ 1,2,\ldots,m\}$.

Among the various spectral characteristics of $\mathcal{F}$, the most classical one is the \emph{joint spectral radius} (JSR), first introduced by Rota and Strang~\cite{RotaStrang1960}. Intuitively, the JSR measures the \emph{maximum possible rate of growth} of products generated by $\mathcal{F}$. For a single matrix $A$, in which case the product semigroup is simply given by all powers of $A$, this quantity coincides with its spectral radius $\rho(A)$; however, for a family of matrices, the situation is considerably more complex, since one must consider all unordered and repeated products in the semigroup. The crucial feature of the JSR, which was proved by Berger and Wang~\cite{BergerWang1992}, is that a necessary and sufficient condition to guarantee that all products of degree $k$ (that is, formed by the products of $k$ matrices taken from $\mathcal{F}$), converge to zero as $k \to \infty$ is that the JSR is smaller than $1$, in complete analogy to the property, for a single matrix $A$, $\rho(A)<1$.

The JSR plays a central role in a variety of applications, including discrete switched systems (see, e.g., Shorten et al.~\cite{MaSh07}), convergence analysis of subdivision schemes and wavelets (see, e.g., Daubechies and Lagarias~\cite{DauLag91}, Hechler, M\"o\ss ner and Reif~\cite{HMR09}, Villemoes~\cite{Vill}, and~\cite{GugPro16}), functional equations, coding theory (see, e.g., Moision, Orlitsky and Siegel~\cite{MOS01}), and combinatorics (see, e.g., Blondel, Jungers, and Protasov~\cite{JPB06}).

A well-known application of the JSR arises in the stability analysis of variable-step numerical methods for differential equations, where the problem often reduces to studying linear difference equations with variable coefficients. In this context, one considers a parameterized family $\cF = \{A(\vartheta)\}_{\vartheta \in \Theta}$, where $\vartheta$ represents parameters such as stepsizes. The asymptotic behavior of solutions depends on the convergence properties of products generated by $\cF$. This is expressed by the discrete system
\[
x_k = A(\vartheta_k) x_{k-1}, \qquad k \ge 1
\]
with an initial vector $x_0 \in \R^d$. Here $\vartheta_k \in \Theta$ denotes the parameter value at step $k$, often determined by an adaptive stepsize strategy. The solution satisfies $x_k = P_k x_0$ where $P_k = A(\vartheta_k) A(\vartheta_{k-1}) \cdots A(\vartheta_1)$. A sufficient condition for asymptotic stability (i.e., convergence of solutions to zero) is the existence of a norm such that $\| A(\vartheta) \| < 1$ for all $\vartheta \in \Theta$. However, finding a suitable norm in which all matrices are contractive can lead to overly restrictive stability conditions. A more general and powerful criterion is obtained through the JSR $\rho(\mathcal{F})$, since all products of matrices in $\mathcal{F}$ vanish asymptotically if and only if $\rho(\mathcal{F}) < 1$ (see Jungers~\cite{J2009} for a comprehensive discussion). The JSR-based stability condition is often \textbf{too conservative} for probabilistic questions: even when $\rho(\mathcal{F}) > 1$, typical randomly generated trajectories may still decay, or remain stable with high probability, because the JSR captures only the \emph{worst-case} growth scenario, which is associated with a very specific parameter selection in the generation of an infinite product.

In summary, both criteria are not necessary (but only sufficient) to guarantee the vanishing behavior of a product sequence $\{ P_k \}_{k \ge 1}$ as $k \to \infty$ and it would be interesting to characterize in a probabilistic way the asymptotic stability behavior of typical sequences (instead of worst-case ones, as the JSR analysis would provide).

\smallskip

A complementary quantity is the \emph{lower spectral radius} (LSR), introduced by Gurvits~\cite{Gu}, which measures the minimum possible rate of growth of matrix products in $\cF$. The LSR serves as a crucial stability measure with applications in control theory~\cite{GuLaPr17,JuMa17}, where it reveals the most stable achievable trajectories and provides sharp bounds in stabilization problems. It also appears in the computation of lower and upper bounds for Euler partition functions (see, e.g.,~\cite{Eul45,Kru50}), and in the analysis of refinement equations (see, e.g.,~\cite{Pro17}). Again, the crucial property of the LSR is that--if this is smaller than $1$--there exist infinite products converging to zero.

While the JSR and LSR capture extremal growth behaviors, they offer limited information on the typical or average dynamics of a system. To bridge this gap, we introduce the concept of the \emph{random spectral radius} (RSR) of a family of matrices, defined as the probabilistic asymptotic growth rate of products generated by random sampling according to a prescribed probability distribution.

The RSR thus complements the JSR and LSR by providing a probabilistic characterization of the system's growth: it describes what happens along typical random switching sequences, rather than the extreme best- or worst-case scenarios.

When the LSR is smaller than $1$ and the JSR is larger than $1$, the product semigroup contains both vanishing and unbounded sequences. In such cases, the RSR provides a probabilistic lens for addressing fundamental questions:
\begin{itemize}
\item How likely is a randomly switched system to exhibit the worst-case growth rate (JSR), corresponding to unstable behavior?
\item Do most trajectories remain bounded, even when the JSR exceeds $1$ and unstable trajectories exist? And do most trajectories stay near the most stable regime (LSR), indicating robust stability?
\item How concentrated or dispersed are the growth rates within these bounds?
\end{itemize}

Understanding this is essential in probabilistic stability analysis, where one seeks not only worst-case guarantees but also likelihood estimates of various behaviors under random or uncertain switching. In control design, such insights inform the construction of switching strategies: a narrow growth-rate distribution suggests robustness, while a wide and asymmetric one indicates sensitivity to specific switching patterns.

Moreover, the distribution of spectral radii provides a refined measure of system robustness beyond the JSR--LSR gap, revealing whether small perturbations in switching induce gradual or abrupt changes in asymptotic growth. This probabilistic perspective also enriches our understanding of average convergence rates in subdivision schemes and the regularity of refinement equations, where typical rather than extremal behavior governs smoothness and stability.

In summary, the RSR provides a new tool for probabilistic stability assessment, switching strategy optimization, and average-case performance evaluation, bridging deterministic extremal analysis and stochastic system behavior.

We remark that an alternative approach, although conceptually different from ours, to defining a probabilistic joint spectral radius $\rho(\nu, P, \mathcal{F})$ for Markov random switching signals with transition matrix $P$ and invariant probability measure $\nu$ has been proposed in~\cite{CMS21}. In that work, the authors consider probability measures on the space of sequences generated by discrete-time shift-invariant Markov chains.

\subsection{Main results of the paper}

The asymptotic analysis of random matrix products has classical roots in the work of Furstenberg and Kesten~\cite{FurstenbergKesten1960}, who established almost sure convergence of the normalized logarithmic norm to the top Lyapunov exponent. For the spectral radius specifically, Aoun and Sert~\cite{AounSert2021} recently proved a law of large numbers in full generality. Moreover,  \cite{Aoun2021} established the central limit theorem around the limiting Lyapunov exponent for eigenvalues of random matrix products under a strong irreducibility condition in the general non-commutative cases.

We define the \emph{random spectral radius} (RSR) $\rho_n(\cF,\mathbb P_{\cF})$ as the normalized spectral radius of products of length $n$, namely the random variable
\[
\rho_n(\sigma) \coloneqq \rho \left(A_{i_n}\cdots A_{i_1}\right)^{1/n}, \qquad \sigma=(i_1,\dots,i_n)\in [m]^n,
\]
where the indices are sampled independently according to $\mathbb P_{\cF}$. We present theoretical results for families of commuting and upper- or lower-triangular matrices, together with numerical investigations for general complex matrices.

Our first result establishes a Law of Large Numbers (LLN) for $\rho_n$ when $\cF$ consists of commuting or upper- or lower-triangular matrices. As $n \to +\infty$, the spectral radius of typical products converges almost surely to a deterministic limit, namely
\[
\mathbb{P}\left(\lim_{n\to\infty}\rho_n\bigl(\mathcal F,\mathbb P_{\mathcal F}\bigr) = \rho_\infty\bigl(\mathcal F,\mathbb P_{\mathcal F}\bigr)\right)=1.
\]
Our second contribution is a Central Limit Theorem (CLT) describing fluctuations of the RSR around $\rho_{\infty} \bigl(\mathcal F,\mathbb P_{\mathcal F}\bigr)$. In the generic finite-family case, when a unique active coordinate determines the spectral radius and the associated asymptotic variance is positive, the normalized fluctuations converge in distribution to a standard normal:
\[
\frac{\rho_n(\mathcal F,\mathbb P_{\mathcal F})-\rho_\infty(\mathcal F,\mathbb P_{\mathcal F})}{n^{-1/2} \, \sigma_\infty(\mathcal F,\mathbb P_{\mathcal F})}
\Longrightarrow \mathcal N(0,1).
\]
In special commuting or triangular configurations, several diagonal coordinates may contribute equally to the deterministic limit. In that coalescence regime, the limiting fluctuation is no longer Gaussian in general: if $\boldsymbol G=(G_j)_{j\in J}$ is the centered Gaussian vector with covariance matrix $\Sigma_{\mathcal F,\mathbb P_{\mathcal F}}$ associated with the active coordinates, then
\[
\sqrt n\bigl(\rho_n(\mathcal F,\mathbb P_{\mathcal F})-\rho_\infty(\mathcal F,\mathbb P_{\mathcal F})\bigr)
\Longrightarrow
\rho_\infty(\mathcal F,\mathbb P_{\mathcal F})\,\max_{j\in J} G_j.
\]
To capture finite-size effects, we then analyze corrections beyond the CLT. Because finite matrix families lead to discrete laws for the underlying logarithmic variables, we derive exact multinomial formulas in the aligned unique-maximizer regime and finite-size asymptotic expansions for the mean and variance of $\rho_n$; in the coalescence regime we identify the universal leading terms and, under an additional weak Edgeworth hypothesis on the active law, the corresponding next-order corrections. These formulas show that the mean typically converges to $\rho_\infty$ with error of order $n^{-1}$ in the unique-maximizer case and of order $n^{-1/2}$ in the coalescence case, while the variance is of order $n^{-1}$ with finer corrections depending on the regime.

Definitions of $\rho_{\infty}$, $\sigma_{\infty}$, and $\Sigma_{\cF, \mathbb{P}_{\cF}}$ in terms of the initial data, along with detailed proofs, are provided in~\cref{sec:expected_spectral_radius}. 

Moreover, we extend our results to families $\cF_{\eps}$ obtained via small perturbations (of norm $\eps$) of diagonal matrices $D(\vartheta)$ as
\[
\tilde{A}(\vartheta) = D(\vartheta) + \eps \Delta_\vartheta,
\]
where $\Delta_{\vartheta}$ is normalized (in Frobenius norm), that is $\| \Delta _\vartheta \| = 1$. Under a nondegenerate unique-maximizer assumption, our perturbation analysis yields a uniform first-order logarithmic approximation for the perturbed random spectral radius. When the perturbed family is assumed to remain simultaneously triangularizable, this approximation upgrades to exact fixed-$\eps$ LLN and CLT statements and yields continuous dependence of the spectral parameters $\rho_{\infty}$ and $\sigma_{\infty}$ on $\eps$ for sufficiently small $|\eps|$. Although multi-directional fluctuation patterns can arise in the unperturbed diagonal setting, small generic perturbations separate the formerly coalescent branches at first order, so the perturbative approximation selects a preferred dominant branch.

Beyond the LLN and CLT, we establish a \textbf{large deviation principle} (LDP) for the RSR. The LDP provides exponential decay rates for the probability that $\rho_n$ deviates from $\rho_\infty$ by a fixed amount, directly answering how likely a randomly switched system is to exhibit worst-case (JSR) or best-case (LSR) growth. In the nondegenerate unique-maximizer case, the rate function is computed explicitly via a Legendre--Fenchel transform.

While prior work established the existence of limiting values and Gaussian fluctuations for random matrix products, our contributions provide several new results. First, we derive \emph{explicit formulas} for $\rho_\infty(\mathcal{F}, \mathbb{P}_{\mathcal{F}})$ and $\sigma_\infty^2(\mathcal{F}, \mathbb{P}_{\mathcal{F}})$ for diagonal, triangular, and perturbed families. Second, we characterize the \emph{coalescence phenomenon}: when multiple eigenvalues achieve the same maximal growth rate, fluctuations are governed by the maximum of a correlated Gaussian vector, yielding a non-Gaussian phase transition. Third, we develop finite-size asymptotic expansions of Edgeworth type, large deviation principles with explicit rate functions in the scalar regime, and a perturbation theory with uniform bounds.

\subsection{Outline of the article}

The paper is organized as follows. In~\cref{sec:expected_spectral_radius}, we introduce the random spectral radius and present the main theoretical results. Our analysis begins with the simplest illustrative case where each matrix in the diagonal family has a strictly dominant eigenvalue (\cref{subsec:ordered_diagonal}), in which we establish a LLN and a CLT with explicit formulas for the limit values and their fluctuations. We then extend these results to upper- and lower-triangular families, which include the generic diagonal case without the aligned dominance assumption (\cref{subsec:general_diagonal_matrices}). Finally, in~\cref{subsec:approx_errors} we provide explicit finite-size error bounds, exact finite-family expectation formulas, and Edgeworth-type corrections adapted to the discrete matrix setting; in~\cref{subsec:large_deviations} we establish a large deviation principle quantifying the exponential decay of tail probabilities and linking the RSR to the JSR and LSR; and in~\cref{subsec:rho_infty_prob} we analyze the dependence of $\rho_\infty$ on the probability distribution, establishing log-convexity and explicit sensitivity formulas. Following this, in~\cref{sec:perturbation_theory} a detailed perturbation analysis explores the behavior of small deviations from diagonal and triangular structures and shows the robustness of the one-dimensional fluctuation regime under small perturbations. Finally,~\cref{sec:numerics} and~\cref{app:numerics} present numerical experiments for generic dense matrix families.

\section{Random spectral radius} \label{sec:expected_spectral_radius}

In this section, we study the \emph{random spectral radius} (RSR) of a finite family of complex-valued $d \times d$ matrices, which we denote by
\[
\cF \coloneqq \{A_1,\ldots,A_m\} \subset \mathbb C^{d \times d}.
\]

\begin{assumption}
Whenever logarithms of eigenvalue moduli are used in this section, we assume that the
corresponding eigenvalues are nonzero. In particular, in the diagonal and triangular
settings considered below, we assume that
\[
|\lambda_j^{(i)}|>0, \qquad i=1,\dots,m,\quad j=1,\dots,d.
\]
\end{assumption}

We equip the family $\cF$ with a probability distribution $\mathbb{P}_{\cF}: \cF \rightarrow [0,1]$, assigning to each matrix $A_i$ the mass
\[
p_i \coloneqq \mathbb{P}_{\cF}(A_i), \qquad i = 1,\ldots,m.
\]
Denoting the set of all sequences of length $n$ with elements in $\{1,\ldots,m\}$ by $[m]^n \coloneqq \{1,\ldots,m\}^n,$ we define, for $\sigma=(i_1,\ldots,i_n)\in [m]^n$, the product map
\[
\Pi(\sigma) \coloneqq \prod_{j=1}^n A_{i_j} = A_{i_n} \cdots A_{i_1},
\]
and the \emph{random spectral radius of length $n$} as the random variable
\[
\rho_n : [m]^n\longrightarrow [0,\infty), \qquad \rho_n(\sigma) \coloneqq \rho\bigl(\Pi(\sigma)\bigr)^{1/n}.
\]
We equip $[m]^n$ with the product probability measure $\mathbb P(\sigma)=\prod_{j=1}^n p_{i_j}$. If one wishes to regard $\rho_n$ as a random variable on the set of products 
\[
\Sigma_n(\cF) \coloneqq \{\Pi(\sigma)\ : \ \sigma\in [m]^n\},
\]
then the corresponding law is the push-forward $\mathbb P_n(P) \coloneqq \sum_{\sigma: \, \Pi(\sigma)=P}\mathbb P(\sigma)$.

\begin{remark}
Matrix multiplication is generally non-commutative, so the order of factors affects both $\Pi(\sigma)$ and $\rho\bigl(\Pi(\sigma)\bigr)$. Exceptions include diagonal families (which commute) and triangular ones, for which $\rho\bigl(\Pi(\sigma)\bigr)$ is order-independent. More generally, since $\rho(XY)=\rho(YX)$ for any square matrices $X,Y$, the spectral radius is invariant under cyclic permutations.
\end{remark}

\begin{remark}
The definition of the RSR extends to infinite families: if matrices are sampled independently from a probability space via a measurable map, one can form products of arbitrary length and define the RSR as the normalized spectral radius of these products.
\end{remark}

To describe the Gaussian fluctuations that arise in the limit, we use the following standard notation for the cumulative distribution function (CDF) and probability density function (PDF) of the \textbf{standard normal distribution} $\mathcal{N}(0,1)$:
\begin{equation}\label{eq:cdf_pdf}
\Phi(x) = \frac{1}{\sqrt{2 \pi}} \int_{-\infty}^{x} \mathrm{e}^{-\frac{u^2}{2}}\,\mathrm{d}u, \qquad
\phi(x) = \frac{1}{\sqrt{2\pi}}\, \mathrm{e}^{-\frac{x^2}{2}}.
\end{equation}
Here, $\Phi(x)$ gives the probability that a standard normal random variable is less than or equal to $x$, and $\phi(x)$ is the corresponding density function. 

We start with diagonal families. This also covers commuting families that are simultaneously diagonalizable, after a change of basis. The second case involves families of upper or lower triangular matrices, which, while not necessarily commuting, share a common triangular structure that enables explicit analysis. In both settings, this structure allows for the derivation of explicit probabilistic limit laws for the RSR.

\subsection{RSR of commuting families with aligned dominance} \label{subsec:ordered_diagonal}

We now specialize to the case where the family consists of diagonal matrices 
\[
\mathcal{S}= \bigl\{ D_1,\ldots,D_m \bigr\} \subset \mathbb C^{d \times d}, \qquad D_i = \operatorname{diag} \bigl(\lambda^{(i)}_1, \lambda^{(i)}_2,\ldots,\lambda^{(i)}_d\bigr).
\]
Throughout this section, we assume that the family $\cS$ satisfies the \emph{aligned dominance assumption}: there exists  $r_\star\in\{1,\dots,d\}$ such that for each matrix $D_i$, the corresponding eigenvalue $\lambda^{(i)}_{r_\star}$ is strictly dominant in modulus,
\begin{equation} \label{def:aligned dom convergence}
\bigl|\lambda^{(i)}_{r_\star}\bigr| > \max_{r\neq r_\star} \, \bigl|\lambda^{(i)}_{r}\bigr|.
\end{equation}
By simultaneously permuting coordinates (i.e., conjugating all $D_i$ by the same permutation matrix), we may assume without loss of generality that $r_\star=1$.

Since diagonal matrices commute, the spectral radius of any product is simply the maximum modulus among the diagonal entries of that product. By~\eqref{def:aligned dom convergence}, this maximum is always achieved by the first diagonal entry. Consequently,
\[
\rho \bigl( \Pi(\sigma) \bigr) = \max_{1\le r\le d}\ \prod_{k=1}^n \bigl|\lambda^{(i_k)}_r\bigr| =\prod_{k=1}^n \bigl|\lambda^{(i_k)}_1\bigr|
\]
for any $\sigma = (i_1,\ldots,i_n) \in [m]^n$. Equipping $\mathcal{S}$ with a probability distribution $\mathbb{P}_{\mathcal{S}}$ with weights $p_i:=\mathbb{P}_{\mathcal{S}}(D_i)$, we define an i.i.d. sequence of random variables $\{Y_k\}_{k \geq 1}$, where each $Y_k$ takes the absolute value of a dominant eigenvalue according to
\[
\mathbb{P}_Y\Bigl(Y_k=|\lambda^{(i)}_1|\Bigr)=p_i, \qquad i = 1, \dots, m.
\]
In other words, the probability $\mathbb{P}_{\mathcal{S}}$ on matrices induces a corresponding probability distribution $\mathbb{P}_Y$ on the dominant eigenvalues, transferring the sampling mechanism from the matrix level to the scalar level. Under this framework, the (length-$n$) RSR reduces to the geometric mean of $n$ i.i.d. samples:
\[
\rho_n \bigl(\mathcal{S},\mathbb{P}_{\mathcal{S}} \bigr) = \left(\prod_{k=1}^n Y_k\right)^{1/n}.
\]
This i.i.d. structure is crucial for establishing asymptotic results, starting with a LLN that characterizes the spectral radius of a typical matrix product.

\begin{theorem}[LLN]
The random spectral radius of the family $\cS$ converges almost surely to a deterministic limit. Specifically, it satisfies
\begin{equation} \label{def:rho_infty}
\rho_n(\mathcal S,\mathbb P_{\mathcal S}) \xrightarrow{\ \text{a.s.}\ } \rho_\infty\bigl(\mathcal S,\mathbb P_{\mathcal S}\bigr) \coloneqq \prod_{i=1}^m \bigl|\lambda^{(i)}_1\bigr|^{p_i} \qquad \text{as } \, n\to\infty.
\end{equation}
Equivalently, the limit holds with probability one:
\[
\mathbb{P}\left(\lim_{n\to\infty}\rho_n\bigl(\mathcal S,\mathbb P_{\mathcal S}\bigr) = \rho_\infty\bigl(\mathcal S,\mathbb P_{\mathcal S}\bigr)\right)=1.
\]
\end{theorem}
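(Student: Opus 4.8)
The plan is to reduce the claim to the classical strong law of large numbers (SLLN) by passing to logarithms, exploiting the fact that the aligned dominance assumption has already collapsed the spectral radius of every product to a single product of scalar random variables. The excerpt establishes that
\[
\rho_n(\mathcal S,\mathbb P_{\mathcal S}) = \left(\prod_{k=1}^n Y_k\right)^{1/n},
\]
where $\{Y_k\}_{k\ge 1}$ are i.i.d.\ and each $Y_k$ takes the value $|\lambda^{(i)}_1|$ with probability $p_i$. Thus it suffices to analyze the geometric mean of i.i.d.\ positive random variables, and the entire proof becomes a one-dimensional probabilistic statement.

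First I would verify that each dominant eigenvalue satisfies $|\lambda^{(i)}_1| > 0$, so that $\log Y_k$ is well defined. This is immediate from the strict aligned dominance inequality~\eqref{def:aligned dom convergence}, which forces $|\lambda^{(i)}_1| > \max_{r\ne 1}|\lambda^{(i)}_r| \ge 0$. Setting $X_k := \log Y_k$ then yields i.i.d.\ real-valued random variables with
\[
\log \rho_n(\mathcal S,\mathbb P_{\mathcal S}) = \frac{1}{n}\sum_{k=1}^n X_k.
\]
Next I would dispatch integrability: since $\mathcal S$ is finite, each $X_k$ takes only finitely many values, so $\bE\bigl[|X_1|\bigr] = \sum_{i=1}^m p_i \bigl|\log|\lambda^{(i)}_1|\bigr| < \infty$ holds trivially, and the SLLN applies to give
\[
\frac{1}{n}\sum_{k=1}^n X_k \xrightarrow{\ \text{a.s.}\ } \bE[X_1] = \sum_{i=1}^m p_i \log|\lambda^{(i)}_1| \qquad \text{as } n\to\infty.
\]

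Finally, I would exponentiate, using that almost sure convergence is preserved under the continuous map $\exp$, to obtain
\[
\rho_n(\mathcal S,\mathbb P_{\mathcal S}) \xrightarrow{\ \text{a.s.}\ } \exp\!\left(\sum_{i=1}^m p_i \log|\lambda^{(i)}_1|\right) = \prod_{i=1}^m \bigl|\lambda^{(i)}_1\bigr|^{p_i} = \rho_\infty(\mathcal S,\mathbb P_{\mathcal S}),
\]
which is exactly~\eqref{def:rho_infty}; the equivalent probability-one formulation follows immediately. I do not expect any genuine obstacle here: the dominance assumption and the finiteness of $\mathcal S$ have already done the essential work, reducing a statement about matrix products to a direct application of the SLLN. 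The only points requiring (trivial) care are the strict positivity of the dominant eigenvalues, ensuring the logarithm is defined, and the integrability of $\log Y_1$, both of which are guaranteed by the standing hypotheses.
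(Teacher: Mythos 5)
Your proposal is correct and follows essentially the same route as the paper: take logarithms to reduce the geometric mean of i.i.d.\ dominant eigenvalue moduli to an arithmetic mean, apply the strong law of large numbers, and exponentiate back by continuity. Your explicit verification that the strict dominance inequality~\eqref{def:aligned dom convergence} forces $|\lambda^{(i)}_1|>0$ (so the logarithm is defined) is a small point the paper's proof leaves implicit, but otherwise the two arguments coincide.
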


\begin{proof}
Define $Z_k \coloneqq \log Y_k$ for $k\geq 1$, the i.i.d. random variables with finite moments. The random spectral radius can be expressed as the exponential:
\begin{equation} \label{eq:rho=exp(sum)}
\rho_n(\mathcal S,\mathbb P_{\mathcal S})=\exp\left(\frac1n\sum_{k=1}^n Z_k\right),
\end{equation}
where $Z_k$ takes the value $\log|\lambda^{(i)}_1|$ with probability $p_i$, i.e. $\mathbb{P}_Z \bigl(Z_k=\log|\lambda^{(i)}_1|\bigr) = p_i$. By the strong law of large numbers,
\[
\frac1n\sum_{k=1}^n Z_k \xrightarrow{\ \text{a.s.}\ } \mathbb E[Z_1] = \sum_{i=1}^m p_i \log\bigl|\lambda^{(i)}_1 \bigr| =\log\left(\prod_{i=1}^m \bigl|\lambda^{(i)}_1\bigr|^{p_i}\right).
\]
Therefore, by continuity of the exponential, the random spectral radius $\rho_n(\mathcal S, \mathbb{P}_{\mathcal S})$ converges almost surely to the exponential of the mean value of $Z_1$, that is,
\[
\rho_n(\mathcal S,\mathbb P_{\mathcal S})  \xrightarrow{\ \text{a.s.}\ }
\exp\Bigl(\mathbb{E}[Z_1]\Bigr) = \prod_{i=1}^m \bigl|\lambda_1^{(i)}\bigr|^{p_i}.
\]
\end{proof}

\begin{remark} 
For the matrix family $\cS$, under the aligned dominance assumption~\eqref{def:aligned dom convergence}, the moments of the RSR have compact exact formulas. For every $q>0$,
\[
\mathbb E\bigl[\rho_n(\mathcal S,\mathbb P_{\mathcal S})^q\bigr]
=
\left(\sum_{i=1}^m p_i |\lambda_1^{(i)}|^{q/n}\right)^n.
\]
In particular,
\[
\mathbb{E}\bigl[\rho_n(\mathcal{S},\mathbb{P}_{\mathcal{S}})\bigr]
=
\left(\sum_{i=1}^m p_i |\lambda_1^{(i)}|^{1/n}\right)^n,
\]
and
\[
\operatorname{Var}\bigl(\rho_n(\mathcal{S},\mathbb{P}_{\mathcal{S}})\bigr)
=
\left(\sum_{i=1}^m p_i |\lambda_1^{(i)}|^{2/n}\right)^n
-
\left(\sum_{i=1}^m p_i |\lambda_1^{(i)}|^{1/n}\right)^{2n}.
\]
As $n \to \infty$,
\[
\mathbb E[\rho_n(\mathcal{S},\mathbb{P}_{\mathcal{S}})] = \rho_\infty(\mathcal S, \mathbb P_{\mathcal S}) \bigl(1+O(n^{-1})\bigr),
\]
and
\[
\operatorname{Var}\left(\rho_n(\mathcal{S},\mathbb{P}_{\mathcal{S}})\right)
=  \frac{\sigma_{\infty}^2(\mathcal S, \mathbb P_{\mathcal S})}{n}+O(n^{-2}),
\]
where
\begin{equation} \label{def:sigma_infty}
\sigma_{\infty}^2 (\mathcal S, \mathbb P_{\mathcal S}) := \rho_\infty^2(\mathcal S, \mathbb P_{\mathcal S}) \left(\sum_{i=1}^m p_i\bigl(\log \bigl|\lambda^{(i)}_1 \bigr|\bigr)^2 - \left(\sum_{i=1}^m p_i\log \bigl|\lambda^{(i)}_1\bigr|\right)^2 \right).
\end{equation}
\end{remark}

Having characterized the limiting behavior of $\rho_n$ via the law of large numbers, we now turn to the fluctuations around this limit.

\begin{theorem}[CLT] \label{thm:CLT}
Let $Z_1=\log Y_1$, and set
\[
\mu_Z\coloneqq \mathbb E[Z_1]=\log\rho_\infty(\mathcal S,\mathbb P_{\mathcal S}),
\qquad
\sigma_Z^2\coloneqq \operatorname{Var}(Z_1).
\]
Then
\[
\sqrt{n}\,\bigl(\rho_n(\mathcal S,\mathbb P_{\mathcal S})-\rho_\infty(\mathcal S,\mathbb P_{\mathcal S})\bigr)  \xRightarrow{\ d\ } \mathcal N\left(0, \sigma_\infty^2(\mathcal S,\mathbb P_{\mathcal S})\right),
\]
where $\sigma_\infty^2(\mathcal S,\mathbb P_{\mathcal S})=\rho_\infty^2(\mathcal S,\mathbb P_{\mathcal S})\sigma_Z^2$. If $\sigma_Z^2>0$, the standardized fluctuations satisfy
\begin{equation}\label{eq:normal_diag}
\lim_{n\to\infty}\mathbb P\left( \frac{\rho_n(\mathcal S,\mathbb P_{\mathcal S})-\rho_\infty(\mathcal S,\mathbb P_{\mathcal S})}{n^{-1/2} \, \sigma_\infty(\mathcal S,\mathbb P_{\mathcal S})} < x\right)=\Phi(x),\qquad x\in\mathbb R.
\end{equation}
If $\sigma_Z^2=0$, then $\rho_n=\rho_\infty$ almost surely for every $n$ and the limiting fluctuation is degenerate.
\end{theorem}

\begin{proof}
When $\sigma_Z^2=0$, the random variable $Z_1$ is almost surely constant, and the assertion is immediate. Assume $\sigma_Z^2>0$. By the classical CLT,
\[
\sqrt n\left(\frac1n\sum_{k=1}^n Z_k-\mu_Z\right)\xRightarrow{\ d\ }\mathcal N(0,\sigma_Z^2).
\]
Since
\[
\rho_n=\exp\left(\frac1n\sum_{k=1}^n Z_k\right),
\]
the delta method applied to the exponential map at $\mu_Z$ gives
\[
\sqrt n(\rho_n-\rho_\infty)\xRightarrow{\ d\ }\rho_\infty\,\mathcal N(0,\sigma_Z^2)=\mathcal N(0,\rho_\infty^2\sigma_Z^2).
\]
The standardized convergence follows because $\sigma_\infty=\rho_\infty\sigma_Z>0$.
\end{proof}

The CLT provides an asymptotic approximation, but it is natural to ask the rate of convergence. The following proposition gives a non-asymptotic bound.

\begin{proposition}[Berry-Esseen bound]\label{prop:berry_esseen}
Under the hypotheses of~\cref{thm:CLT}, assume $\sigma_Z^2>0$, and let $\beta_3 := \mathbb{E}\bigl[|Z_1 - \mu_Z|^3\bigr]$. Define
\[
\overline Z_n\coloneqq \frac{\frac1n\sum_{k=1}^n Z_k-\mu_Z}{n^{-1/2}\sigma_Z}.
\]
Then, for every $n \geq 1$,
\begin{equation}\label{eq:berry_esseen_Z}
\sup_{x \in \mathbb{R}} \left| \mathbb{P}\bigl(\overline{Z}_n \leq x\bigr) - \Phi(x) \right| \leq \frac{C_{\mathrm{BE}} \, \beta_3}{\sigma_Z^3 \, \sqrt{n}},
\end{equation}
where $C_{\mathrm{BE}} \leq 0.4748$ is the Berry--Esseen constant. Consequently, for every $A>0$ there exists a constant $C_A>0$ such that
\begin{equation}\label{eq:berry_esseen_rho}
\sup_{|x|\le A} \left| \mathbb P \left( \frac{\rho_n(\mathcal S,\mathbb P_{\mathcal S})-\rho_\infty(\mathcal S,\mathbb P_{\mathcal S})}{n^{-1/2}\sigma_\infty(\mathcal S,\mathbb P_{\mathcal S})} \le x \right) -\Phi(x) \right| \le \frac{C_A}{\sqrt n}.
\end{equation}
\end{proposition}

\begin{proof}
The bound~\eqref{eq:berry_esseen_Z} follows from the Berry--Esseen theorem (see, e.g., \cite[Theorem 3.4.17]{durrett2019probability}) applied to the i.i.d. sequence $\{(Z_k-\mu_Z)/\sigma_Z\}_{k\ge1}$; the constant $C_{\mathrm{BE}} \le 0.4748$ is due to~\cite{shevtsova2011improvement}. For the transfer to $\rho_n$, use $\sigma_\infty=\rho_\infty\sigma_Z$ and write, uniformly for $|x|\le A$,
\[
\mathbb P\left(\frac{\rho_n-\rho_\infty}{n^{-1/2}\sigma_\infty}\le x\right)
=
\mathbb P\left(\overline Z_n\le a_{n,x}\right),
\qquad
 a_{n,x}\coloneqq \frac{\sqrt n}{\sigma_Z}\log\left(1+\frac{x\sigma_Z}{\sqrt n}\right).
\]
The Taylor expansion of the logarithm gives $a_{n,x}=x+O_A(n^{-1/2})$. Combining~\eqref{eq:berry_esseen_Z} with the Lipschitz continuity of $\Phi$ gives~\eqref{eq:berry_esseen_rho}.
\end{proof}

\subsection{RSR of triangular matrices} \label{subsec:general_diagonal_matrices}

We now consider a more general setting where the aligned dominance assumption does \textbf{not} hold. Let $\mathcal{T}$ be a finite collection of upper(lower)-triangular matrices with
\[
\mathcal{T}=\{T_1,\ldots, T_m\},\qquad \left(T_i\right)_{jj} = \lambda^{(i)}_j, \quad j = 1, \dots, d.
\]                                
The distribution $\mathbb{P}_{\mathcal{T}}$ on $\mathcal{T}$ is defined by weights $p_i:=\mathbb{P}_{\mathcal{T}}(T_i)$. We emphasize that the family $\mathcal{T}$ consists exclusively of either upper or lower triangular matrices, \textbf{not both}. 

The spectrum of the product of upper (or lower) triangular matrices is independent of the multiplication order, hence we find the spectral radius in the following form:
\[
\rho\bigl(\Pi(\sigma)\bigr)=\rho\!\left(\prod_{k=1}^n T_{i_k}\right)=\max_{1\le j\le d}\ \prod_{k=1}^n \bigl|\lambda^{(i_k)}_j\bigr|.
\]
The key difference from the aligned dominance case is the presence of the maximum function, which chooses a value among \emph{all} diagonal positions. To capture this structure, we introduce $d$-dimensional i.i.d. random vector variables
\[
\boldsymbol{Y}_k = \bigl(Y_k^{(1)}, \dots, Y_k^{(d)}\bigr), \qquad k \geq 1
\]
representing the absolute eigenvalues of each sampled matrix. In particular, each random vector $\boldsymbol{Y}_k$ takes the value $(|\lambda^{(i)}_1|,\ldots, |\lambda_d^{(i)}|)$ with probability
\[
\mathbb{P}_{\boldsymbol{Y}} \Bigl(\boldsymbol{Y}= \left( \bigl|\lambda^{(i)}_1\bigr|,\dots, \bigl|\lambda_d^{(i)}\bigr| \right) \Bigr) = p_i,\qquad i=1,\ldots,m.
\]
The random spectral radius is then the maximum component of the componentwise geometric mean of these vectors. Writing $\odot$ for componentwise multiplication,
\begin{equation}\label{eq:rho_n_max_prodY}
\rho_n(\mathcal{T},\mathbb P_{\mathcal{T}}) = \max_{1\le j\le d} \left(\boldsymbol{Y}_1\odot\cdots\odot \boldsymbol{Y}_n\right)_j^{1/n}
\end{equation}
where the subscript $j$ denotes the $j$-th component. Unlike the aligned dominance case, where we tracked a single scalar sequence, here the RSR value is formed from all the $d$ components of a resulting random vector.

\begin{theorem}[LLN]\label{thm:LLN gen diag}
The RSR of the family $\mathcal{T}$ converges almost surely, i.e.
\[
\rho_n(\mathcal{T},\mathbb P_{\mathcal{T}}) \xrightarrow{\ \text{a.s.}\ } \rho_\infty\bigl(\mathcal{T},\mathbb P_{\mathcal{T}}\bigr)  \qquad \text{as } \, n\to\infty,
\]
where 
\begin{equation}\label{def:rho_infty2}
\rho_\infty(\mathcal{T},\mathbb P_{\mathcal{T}}) \coloneqq \max_{1\le j\le d} \rho_\infty^{(j)}(\mathcal{T},\mathbb P_{\mathcal{T}}), \qquad \rho_\infty^{(j)}(\mathcal{T},\mathbb P_{\mathcal{T}}) \coloneqq \prod_{i=1}^m \bigl|\lambda^{(i)}_j\bigr|^{p_i}.
\end{equation}
Equivalently, the corresponding limit holds with probability one:
\[
\mathbb P \left(\lim_{n\to\infty}\rho_n(\mathcal{T},\mathbb P_{\mathcal{T}})=\rho_\infty(\mathcal{T},\mathbb P_{\mathcal{T}})\right)=1.
\]
\end{theorem}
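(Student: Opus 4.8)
The plan is to reduce this vector-valued statement to $d$ independent applications of the scalar law of large numbers — one per diagonal coordinate — and then push the limit through the (continuous) maximum in~\eqref{eq:rho_n_max_prodY}. First I would fix a coordinate $j \in \{1,\ldots,d\}$ and isolate the corresponding component of the componentwise geometric mean, namely
\[
\rho_n^{(j)}(\mathcal{T},\mathbb{P}_{\mathcal{T}}) := \left( \prod_{k=1}^n Y_k^{(j)} \right)^{1/n}.
\]
For each fixed $j$ the scalars $\{Y_k^{(j)}\}_{k\ge 1}$ are i.i.d. in $k$ (being the $j$-th marginals of the i.i.d. vectors $\boldsymbol Y_k$), taking the value $|\lambda^{(i)}_j|$ with probability $p_i$; since every diagonal entry is nonzero, $\log Y_k^{(j)}$ takes finitely many finite values and is therefore bounded, hence integrable. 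Writing $\rho_n^{(j)} = \exp\!\left(\tfrac{1}{n}\sum_{k=1}^n \log Y_k^{(j)}\right)$ exactly as in the proof of the first law of large numbers, applying the strong law to $\log Y_k^{(j)}$ and using continuity of the exponential yields
\[
\rho_n^{(j)}(\mathcal{T},\mathbb{P}_{\mathcal{T}}) \xrightarrow{\ \text{a.s.}\ } \exp\!\left( \sum_{i=1}^m p_i \log \bigl|\lambda^{(i)}_j\bigr| \right) = \rho_\infty^{(j)}(\mathcal{T},\mathbb{P}_{\mathcal{T}}),
\]
with $\rho_\infty^{(j)}$ as defined in~\eqref{def:rho_infty2}.

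Second, I would upgrade these $d$ separate almost-sure statements into a single joint one. For each $j$ let $\Omega_j$ be the full-measure event on which $\rho_n^{(j)} \to \rho_\infty^{(j)}$; since $d$ is finite, the intersection $\Omega_\star := \bigcap_{j=1}^d \Omega_j$ still has probability one. On $\Omega_\star$ the entire vector $(\rho_n^{(1)},\ldots,\rho_n^{(d)})$ converges to $(\rho_\infty^{(1)},\ldots,\rho_\infty^{(d)})$, and because the map $\mathbb{R}^d \ni x \mapsto \max_{1\le j\le d} x_j$ is continuous, the recombination~\eqref{eq:rho_n_max_prodY} gives
\[
\rho_n(\mathcal{T},\mathbb{P}_{\mathcal{T}}) = \max_{1\le j\le d} \rho_n^{(j)} \longrightarrow \max_{1\le j\le d} \rho_\infty^{(j)} = \rho_\infty(\mathcal{T},\mathbb{P}_{\mathcal{T}})
\]
on $\Omega_\star$, which is precisely the claimed almost-sure convergence.

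There is essentially no deep obstacle: the result is a continuous-mapping corollary of the scalar law of large numbers applied coordinatewise. The only points requiring a little care are (i) the integrability of $\log Y_k^{(j)}$, which is immediate from the nonvanishing-spectrum hypothesis, and (ii) the bookkeeping that takes a \emph{finite} intersection of almost-sure events before invoking continuity of the maximum — it is exactly the finiteness of the number of diagonal coordinates that keeps this step clean (a countable intersection would still work, but is not needed). I would also emphasize that, by contrast, the identity of the \emph{maximizing} coordinate need not stabilize with $n$: several coordinates may compete whenever their limits $\rho_\infty^{(j)}$ coincide. This causes no difficulty at the level of the law of large numbers, since only the \emph{value} of the maximum — not its location — enters the statement; it is, however, precisely the phenomenon that forces the more delicate multivariate analysis in the corresponding central limit theorem.
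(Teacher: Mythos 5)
Your proposal is correct and follows essentially the same route as the paper: the paper applies the strong law of large numbers to the log-transformed vectors $\boldsymbol{Z}_k$ (which is exactly your coordinatewise scalar SLLN plus a finite intersection of full-measure events) and then passes to the limit through the continuous maps $\exp$ and $\max$. Your added remarks on integrability of $\log Y_k^{(j)}$ and on the possible non-uniqueness of the maximizing coordinate are accurate and consistent with the paper's subsequent treatment of the set $J$ in the central limit theorem.
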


\begin{proof}
Define the log-transformed vectors by
\begin{equation}\label{def:Zvector}
\boldsymbol{Z}_k \coloneqq (Z_k^{(1)}, \dots, Z_k^{(d)}), \qquad Z_k^{(j)} = \log Y_k^{(j)}.
\end{equation}
The random vectors $\{\boldsymbol{Z}_k\}_{k\geq 1}$ are i.i.d. with finite moments, hence by the strong law of large numbers,
\[
\frac1n\sum_{k=1}^n \boldsymbol{Z}_k \xrightarrow{\, \text{a.s.}\, } \mathbb E[\boldsymbol{Z}_1]
= \left( \sum_{i=1}^m p_i \log|\lambda_1^{(i)}|, \ldots, \sum_{i=1}^m p_i \log|\lambda_d^{(i)}| \right).
\]
Since the exponential function and the maximum of finitely many quantities are continuous operations, passing to the limit from~\eqref{eq:rho_n_max_prodY} we get
\begin{equation}\label{eq:rho_n_via_maxZ}
\rho_n(\mathcal{T},\mathbb P_{\mathcal{T}}) = \max_{1\le j\le d} \exp\left(\frac1n\sum_{k=1}^n Z_k^{(j)} \right) \xrightarrow{\, \text{a.s.} \,} \max_{1\le j\le d}\ \exp \bigl(\mathbb E\bigl[Z_1^{(j)}\bigr]\bigr).
\end{equation}
\end{proof}

Before stating the central limit theorem for the general case, we observe that the maximum in the law of large numbers~\eqref{eq:rho_n_via_maxZ} may be attained at several indices. To keep track of this information, we introduce the set of maximizing indices:
\[
J \coloneqq \arg\max_{1\le j\le d} \rho_\infty^{(j)}(\mathcal{T},\mathbb{P}_{\mathcal{T}}), \qquad s \coloneqq |J|.
\]
If $J=\{j^\star\}$, then the maximum is uniquely attained at index $j^{\star}$, and the asymptotic fluctuations for $n$ large enough are governed solely by the $j^\star$-coordinate, recovering the result of~\cref{thm:CLT}. However, if multiple coordinates share the maximal asymptotic growth rate, they jointly influence the fluctuations. With the following theorem, we show how fluctuations for typical matrix products change in this case.

\begin{theorem}[CLT] \label{thm:clt_general}
Let $\rho_\infty(\mathcal{T}, \mathbb P_{\mathcal{T}})$ be defined by~\eqref{def:rho_infty2}. Then
\begin{equation}\label{CLT_general}
\sqrt n\Bigl(\rho_n(\mathcal{T},\mathbb P_{\mathcal{T}})-\rho_\infty(\mathcal{T},\mathbb P_{\mathcal{T}})\Bigr) \ \xRightarrow{\ d\ }\  \rho_\infty(\mathcal{T},\mathbb P_{\mathcal{T}})\cdot \max_{j\in J} G_j,
\end{equation}
where $\boldsymbol{G} = (G_j)_{j \in J}$ is a zero-mean Gaussian vector with covariance matrix
\begin{equation} \label{def:Sigma}
\Sigma_{\mathcal{T}, \mathbb{P}_{\mathcal{T}}} = \{\Sigma(\ell,h)\}_{\ell,h \in J}, \qquad 
 \Sigma(\ell,h) \coloneqq \sum_{i=1}^m p_i \log \bigl|\lambda_\ell^{(i)}\bigr| \log \bigl|\lambda_h^{(i)}\bigr| - \mu_\ell\mu_h,
\end{equation}
and
\begin{equation} \label{def:mu_j}
\mu_\ell \coloneqq \log \rho_\infty^{(\ell)}(\mathcal{T},\mathbb P_{\mathcal{T}})=\sum_{i=1}^m p_i\log\bigl|\lambda_\ell^{(i)}\bigr|, \qquad \mu^\star \coloneqq \max_{1\le j\le d} \mu_j=\log\rho_\infty(\mathcal{T},\mathbb P_{\mathcal{T}}).
\end{equation}
Equivalently, for every continuity point $x$ of the limiting distribution,
\[
\lim_{n\to\infty}\mathbb P\left(\sqrt n\bigl(\rho_n-\rho_\infty\bigr)<x\right) = \mathbb P \left(\max_{j\in J}G_j<\frac{x}{\rho_\infty}\right) \eqqcolon M_s \left(\frac{x}{\rho_\infty}; \; \Sigma_{\mathcal{T}, \mathbb{P}_{\mathcal{T}}}\right).
\]
If $\Sigma_{\mathcal{T}, \mathbb{P}_{\mathcal{T}}}$ is positive definite, then the limiting distribution has no atoms and the equality holds for every $x\in\mathbb R$.
\end{theorem}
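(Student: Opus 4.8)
The plan is to reduce the statement to a multivariate central limit theorem for the vector of log-eigenvalue averages, and then to isolate the maximizing coordinates by a Slutsky/continuous-mapping argument. First I would introduce the centered and scaled partial sums
\[
\boldsymbol{W}_n := \frac{1}{\sqrt n}\sum_{k=1}^n \bigl(\boldsymbol{Z}_k - \boldsymbol{\mu}\bigr), \qquad \boldsymbol{\mu} := (\mu_1,\dots,\mu_d),
\]
where $\boldsymbol{Z}_k=(Z_k^{(1)},\dots,Z_k^{(d)})$ are the i.i.d. log-eigenvalue vectors from the proof of~\cref{thm:LLN gen diag}. Since the spectra are nonzero and $\mathcal{T}$ is finite, each $\boldsymbol{Z}_k$ has finite second moments, so the classical multivariate CLT — obtained, e.g., via the Cramér--Wold device together with the scalar CLT underlying~\cref{thm:CLT} — gives $\boldsymbol{W}_n \xRightarrow{d} \mathcal{N}(0,\Sigma_{\mathrm{full}})$, where $\Sigma_{\mathrm{full}}$ is the full $d\times d$ covariance matrix of $\boldsymbol{Z}_1$, whose restriction to the index set $J$ is precisely $\Sigma_{\mathcal{T},\mathbb{P}_{\mathcal{T}}}$ as defined in~\eqref{def:Sigma}.

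Next I would rewrite the random spectral radius from~\eqref{eq:rho_n_via_maxZ} as
\[
\rho_n = \max_{1\le j\le d} \exp\!\left(\mu_j + \frac{W_n^{(j)}}{\sqrt n}\right),
\]
and split the maximum into the contributions of $j\in J$ and $j\notin J$. The crucial reduction is that the subdominant coordinates are asymptotically irrelevant: setting $\delta := \mu^\star - \max_{j\notin J}\mu_j > 0$ and using $W_n^{(j)} = O_{\mathbb{P}}(1)$ (so that $W_n^{(j)}/\sqrt n = o_{\mathbb{P}}(1)$), the terms with $j\notin J$ stay below $\rho_\infty e^{-\delta/2}$ while those with $j\in J$ exceed $\rho_\infty e^{-\delta/4}$, each with probability tending to one. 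Consequently the event $A_n := \bigl\{\rho_n = \max_{j\in J}\exp(\mu^\star + W_n^{(j)}/\sqrt n)\bigr\}$ satisfies $\mathbb{P}(A_n)\to 1$, and I may replace $\rho_n$ by $M_n := \max_{j\in J}\exp(\mu^\star + W_n^{(j)}/\sqrt n)$ without altering the weak limit, since two sequences that agree with probability tending to one share the same limit in distribution (for bounded Lipschitz test functions the difference of expectations is at most $2\|f\|_\infty\,\mathbb{P}(A_n^c)\to 0$).

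On $M_n$ the analysis is direct. By monotonicity of the exponential, $M_n = \rho_\infty\exp(V_n/\sqrt n)$ with $V_n := \max_{j\in J} W_n^{(j)}$, so
\[
\sqrt n\,(M_n - \rho_\infty) = \rho_\infty\sqrt n\Bigl(\exp(V_n/\sqrt n) - 1\Bigr) = \rho_\infty\bigl(V_n + o_{\mathbb{P}}(1)\bigr),
\]
where the first-order Taylor expansion of the exponential controls the remainder because $V_n = O_{\mathbb{P}}(1)$ forces $V_n^2/\sqrt n = o_{\mathbb{P}}(1)$. The continuous mapping theorem, applied to the coordinate projection onto $J$ and to the maximum, gives $V_n \xRightarrow{d} \max_{j\in J} G_j$ with $\boldsymbol{G}\sim\mathcal{N}(0,\Sigma_{\mathcal{T},\mathbb{P}_{\mathcal{T}}})$, and Slutsky's theorem then yields $\sqrt n(M_n-\rho_\infty)\xRightarrow{d}\rho_\infty\max_{j\in J} G_j$. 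Combining this with the reduction on $A_n$ establishes~\eqref{CLT_general}, and the equivalent CDF formulation follows by unwrapping the definition of $M_s(\cdot\,;\Sigma_{\mathcal{T},\mathbb{P}_{\mathcal{T}}})$.

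I expect the main obstacle to be the reduction step: one must verify carefully that the $\sqrt n$ scaling does not amplify the subdominant coordinates, i.e. that the deterministic exponential separation of the means $\mu_j$ dominates the $O_{\mathbb{P}}(1)$ Gaussian fluctuations, so that the maximizing set $J$ alone carries the limiting fluctuation. The remaining ingredients — the multivariate CLT and the Taylor linearization of the exponential — are routine once this separation is in place.
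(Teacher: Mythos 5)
Your proposal is correct and follows essentially the same route as the paper: a multivariate CLT for the log-eigenvalue vectors $\boldsymbol{Z}_k$, elimination of the subdominant coordinates $j\notin J$ via the deterministic gap in the means, and a first-order expansion linking the exponential to the linear scale. The only cosmetic difference is that you argue at the level of random variables (replacement on a high-probability event plus Slutsky) while the paper manipulates the CDF directly by taking logarithms inside the probability; your treatment of the negligible coordinates and of the Taylor remainder is in fact slightly more explicit than the paper's.
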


\begin{proof}
Recall the log-transformed random vectors $\boldsymbol{Z}_k = \log \boldsymbol{Y}_k$ from~\cref{thm:LLN gen diag}. Their law is
\[ 
\mathbb{P}_{ \boldsymbol{Z}} \left( \boldsymbol{Z} = \bigl(\log |\lambda_1^{(i)}|, \ldots, \log |\lambda_d^{(i)} |\bigr)\right) = p_i, \qquad i=1,\ldots,m.
\]
Let $\boldsymbol\mu=(\mu_1,\ldots,\mu_d)$. By the multivariate CLT,
\[
\boldsymbol W_n\coloneqq \sqrt{n} \left( \frac{1}{n}\sum_{k=1}^n \boldsymbol{Z}_k-\boldsymbol{\mu}\right)
\xRightarrow{\ d\ }
\boldsymbol W\sim\mathcal{N}(\boldsymbol{0}, \Gamma),
\]
where $\Gamma$ is the full covariance matrix of $\boldsymbol Z_1$ and the restriction of $\Gamma$ to the coordinates in $J$ is~\eqref{def:Sigma}. Consider the map $F:\mathbb R^d\to(0,\infty)$ given by
\[
F(\boldsymbol z)=\exp\left(\max_{1\le j\le d}z_j\right).
\]
Then $\rho_n=F(\boldsymbol\mu+n^{-1/2}\boldsymbol W_n)$ and $\rho_\infty=F(\boldsymbol\mu)$. The map $F$ is directionally differentiable at $\boldsymbol\mu$, with derivative
\[
F_{\boldsymbol\mu}'(\boldsymbol h)=\rho_\infty\max_{j\in J}h_j.
\]
Indeed, for bounded $\boldsymbol h$ and sufficiently small $t>0$, the inactive coordinates $j\notin J$ remain separated from the maximum by the positive gaps $\mu^\star-\mu_j$, so
\[
\max_j(\mu_j+t h_j)=\mu^\star+t\max_{j\in J}h_j.
\]
Therefore
\[
\frac{F(\boldsymbol\mu+t\boldsymbol h)-F(\boldsymbol\mu)}{t}
=\rho_\infty\frac{\exp(t\max_{j\in J}h_j)-1}{t}
\longrightarrow \rho_\infty\max_{j\in J}h_j.
\]
Applying this expansion with $t=n^{-1/2}$ and $\boldsymbol h=\boldsymbol W_n=O_p(1)$ gives
\[
\sqrt n(\rho_n-\rho_\infty)
=
\rho_\infty\max_{j\in J}W_{n,j}+o_p(1).
\]
By the continuous mapping theorem and Slutsky's lemma \cite[Thm.~2.3, Lem.~2.8]{vandervaart1998}, together with the delta method for directionally differentiable functions \cite{fang2019delta}, we obtain~\eqref{CLT_general}.
\end{proof}

\begin{corollary} \label{Cor:genericCLT}
Assume $J = \{j^\star\}$ and let
\[
\sigma_{\infty}^{(j)}(\mathcal{T}, \mathbb{P}_{\mathcal{T}})
:= \rho_\infty^{(j)} (\mathcal{T}, \mathbb{P}_{\mathcal{T}}) \left(\sum_{i=1}^m p_i (\log|\lambda_{j}^{(i)}|)^2 - \left(\sum_{i=1}^m p_i\log|\lambda_{j}^{(i)}|\right)^2\right)^{1/2}.
\]
If $\sigma_{\infty}^{(j^\star)}(\mathcal T,\mathbb P_{\mathcal T})>0$, then
\[
\lim_{n \to \infty}\mathbb{P}\biggl(\frac{\rho_n(\mathcal{T}, \mathbb{P}_{\mathcal{T}}) - \rho_{\infty}(\mathcal{T}, \mathbb{P}_{\mathcal{T}})}{n^{-1/2}\sigma_{\infty}^{(j^\star)}(\mathcal{T}, \mathbb{P}_{\mathcal{T}})}  < x \biggr) = \Phi(x),\qquad x\in\mathbb R.
\]
If $\sigma_{\infty}^{(j^\star)}(\mathcal T,\mathbb P_{\mathcal T})=0$, then the limiting fluctuation in~\cref{thm:clt_general} is degenerate.
\end{corollary}

\begin{remark}
The case $|J|>1$ in~\cref{thm:clt_general} corresponds to families where multiple diagonal positions share the same maximal asymptotic growth rate, i.e., $\rho_\infty^{(j_1)} = \rho_\infty^{(j_2)}$ for some indices $j_1 \neq j_2 \in J$. Since this requires the equality 
\[
\sum_{i=1}^m p_i \log|\lambda_{j_1}^{(i)}| = \sum_{i=1}^m p_i \log|\lambda_{j_2}^{(i)}|,
\]
which defines a codimension-one submanifold in the parameter space of eigenvalues and probabilities, the condition $|J|>1$ is \textbf{non-generic}: for a ``random'' choice of matrices and probabilities, one expects $|J|=1$ and hence a Gaussian CLT. The multivariate-maximum limit law is thus a boundary phenomenon governing transitions between different dominant eigenvalue regimes. As we demonstrate in~\cref{sec:perturbation_theory}, infinitesimal perturbations of diagonal matrices generically separate the formerly coalescent branches at first order. Under the additional assumption that the perturbed family remains simultaneously triangularizable, this separation yields an exact fixed-$\eps$ single-maximizer LLN/CLT regime for sufficiently small $|\eps|$, reinforcing the view that the $|J|>1$ case is structurally unstable.
\end{remark}

\begin{figure}[ht!]
  \centering
  \includegraphics[width=.5\textwidth]{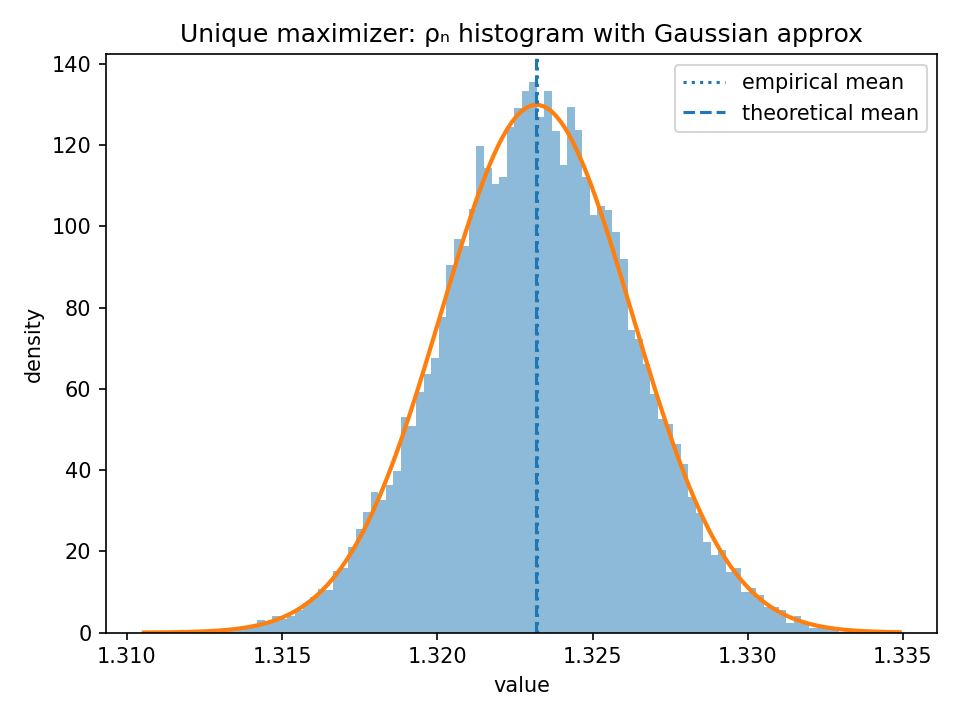}
  \caption{Histogram of $\rho_{800}(\mathcal{T}, \mathbb P_{\mathcal T})$ compared with the Gaussian pdf. Here, the family consists of two $3 \times 3$ real-valued matrices with $J = \{1\}, $ $\rho_\infty \approx 1.323194$ and $\sigma_\infty \approx 0.086861$.}
  \label{fig:unique}
\end{figure}

\begin{remark}
When the covariance matrix $\Sigma_{\mathcal{T}, \mathbb{P}_{\mathcal{T}}}$ is diagonal and the active variances are positive, the components of the Gaussian vector $\boldsymbol{G} = (G_j)_{j \in J}$ are independent. In this case, writing $J=\{j_1,\ldots,j_s\}$,
\begin{equation*}
\lim_{n\to\infty}\mathbb{P}\left(\sqrt{n}(\rho_n-\rho_\infty)<x\right) =\prod_{k=1}^s \Phi \left(\frac{x}{\sigma_{\infty}^{(j_k)} (\mathcal{T}, \mathbb{P}_{\mathcal{T}})}\right)
\end{equation*}
at every continuity point of the limiting distribution.
\end{remark}

\begin{remark}
When $J=\{j_1,\ldots,j_s\}$ with $s>1$, the limiting distribution in~\eqref{CLT_general} is generally non-Gaussian. The limiting random variable
\[
\xi^{(s)} = \rho_\infty \cdot \max_{j\in J} G_j
\]
has nonnegative mean, because $\max_j G_j\ge s^{-1}\sum_jG_j$ and the right-hand side has expectation zero. The mean is strictly positive unless the active Gaussian components coincide almost surely. In that exceptional case, $\max_{j\in J}G_j$ reduces to a single Gaussian random variable and the limit is Gaussian. At continuity points of the distribution function,
\[
\mathbb{P}\bigl(\xi^{(s)}\le x\bigr)  = M_s \left(\frac{x}{\rho_\infty};\,\Sigma_{\mathcal{T}, \mathbb{P}_{\mathcal{T}}}\right),\qquad x\in\mathbb{R}.
\]
When the law is continuous, the corresponding density is
\[
\frac{\mathrm{d}}{\mathrm{d}x} \mathbb{P}\bigl(\xi^{(s)}\le x\bigr) = \frac{1}{\rho_\infty}\frac{\mathrm{d}}{\mathrm{d}t} M_s \left(t;\,\Sigma_{\mathcal{T}, \mathbb{P}_{\mathcal{T}}}\right)\bigg|_{t=x/\rho_\infty}.
\]
The expected value admits the integral representation
\[
\mathbb{E}\bigl[\xi^{(s)}\bigr] = \rho_\infty \int_0^\infty \Bigl(1- M_s(x;\, \Sigma_{\mathcal{T}, \mathbb{P}_{\mathcal{T}}}) - M_s(-x; \, \Sigma_{\mathcal{T}, \mathbb{P}_{\mathcal{T}}})\Bigr) \,\mathrm{d}x,
\]
which follows from the identity $\mathbb{E}[X] = \int_0^\infty \bigl(1-F_X(x)-F_X(-x)\bigr)\,\mathrm{d}x$ for any random variable $X$ with $\mathbb{E}|X| < \infty$.
\end{remark}

In some simple cases, closed-form expressions for both $\mathbb{E}[\xi^{(s)}]$ and $\operatorname{Var}(\xi^{(s)})$ can be derived. We provide these formulas below, with numerical illustrations. All symbolic computations were performed using \texttt{Mathematica}.

\begin{example}
Assume $J=\{j_1,j_2\}$ and let $s_\ell^2:=\Sigma_{\mathcal{T}, \mathbb P_{\mathcal{T}}}(j_\ell,j_\ell)$ denote the variance of $G_{j_\ell}$ for $\ell=1,2$. The correlation coefficient between $G_{j_1}$ and $G_{j_2}$ is
\[
\rho := \frac{\Sigma_{\mathcal{T}, \mathbb P_{\mathcal{T}}}(j_1,j_2)}{s_1 s_2}\in[-1,1].
\]
Then
\[
\mathbb E \left[\xi^{(2)}\right] = \rho_\infty\, \sqrt{\frac{{s_1^{2}+s_2^{2}-2\rho\,s_1 s_2}}{2\pi}}.
\]
If the components of $\boldsymbol{G}$ are independent ($\rho=0$), this simplifies to
\[
\mathbb E \left[\xi^{(2)}\right] = \rho_\infty\,\frac{\sqrt{s_1^{2}+s_2^{2}}}{\sqrt{2\pi}}.
\]
When $s_1=s_2=:s$, the variance has the closed formula
\[
\operatorname{Var}\left(\xi^{(2)}\right) = \rho_\infty^{2}\,s^{2}\left(1-\frac{1-\rho}{\pi}\right).
\]
For unequal variances and $\rho = 0$,
\[
\operatorname{Var}\left(\xi^{(2)}\right) = \rho_\infty^{2}\, \bigl(s_1^{2}+s_2^{2}\bigr)\left(\frac12-\frac{1}{2\pi}\right).
\]
\end{example}

\begin{example}
Assume $J=\{j_1,j_2,j_3\}$ and let the components of $\boldsymbol{G}$ have equal variances $s_1=s_2=s_3=:s$ and equal pairwise correlation $\rho\in[-1/2,1]$. Then
\[
\mathbb E \left[\xi^{(3)}\right] = \rho_\infty\,s\sqrt{1-\rho}\, \frac{3}{2\sqrt{\pi}},\qquad \operatorname{Var}\left(\xi^{(3)}\right) = \rho_\infty^{2}\,s^{2}\left(\rho+(1-\rho)\frac{2\sqrt{3}+4\pi-9}{4\pi}\right).
\]
For $0\le\rho\le1$, these formulas follow from the representation $G_j = s\bigl(\sqrt{\rho}\,Z+\sqrt{1-\rho}\,\eps_j\bigr)$ with $Z,\eps_1,\eps_2,\eps_3$ i.i.d. standard normal. For the admissible negative range $-1/2\le\rho<0$, the same formulas follow by decomposing the equicorrelated Gaussian vector into its sample mean and its orthogonal centered component. A simple computation gives
\[
\mathbb E \left[\max_{1\le k\le 3}\eps_k\right]=\frac{3}{2\sqrt{\pi}},\qquad \operatorname{Var} \left(\max_{1\le k\le 3}\eps_k\right)=\frac{2\sqrt{3}+4\pi-9}{4\pi}.
\]
If the components are uncorrelated ($\rho=0$) with potentially unequal variances, i.e. $\Sigma_{\mathcal{T}, \mathbb{P}_{\mathcal{T}}} = \operatorname{diag}\bigl(s_1^{2},s_2^{2},s_3^{2}\bigr)$, then
\[
\mathbb E \left[\xi^{(3)}\right] =\frac{\rho_\infty}{2\sqrt{2\pi}} \sum_{1\le i<h\le 3} \sqrt{s_i^{2}+s_h^{2}}.
\]
A closed formula for the variance, even when $\rho = 0$, is unavailable assuming that $s_1\neq s_2 \neq s_3$. See \Cref{fig:s3}, which illustrates that the presence of multiple maximizing indices results in a net positive average fluctuation, leading to a skewed limiting distribution.

\begin{figure}[ht!]
  \centering
  \includegraphics[width=.48\textwidth]{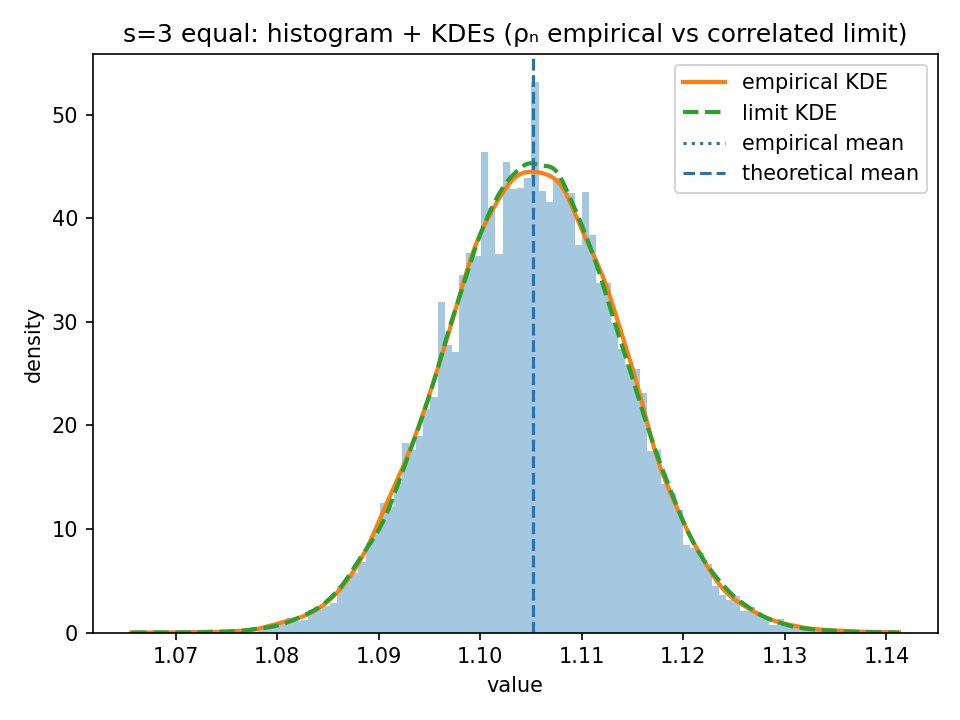}
  \includegraphics[width=.48\textwidth]{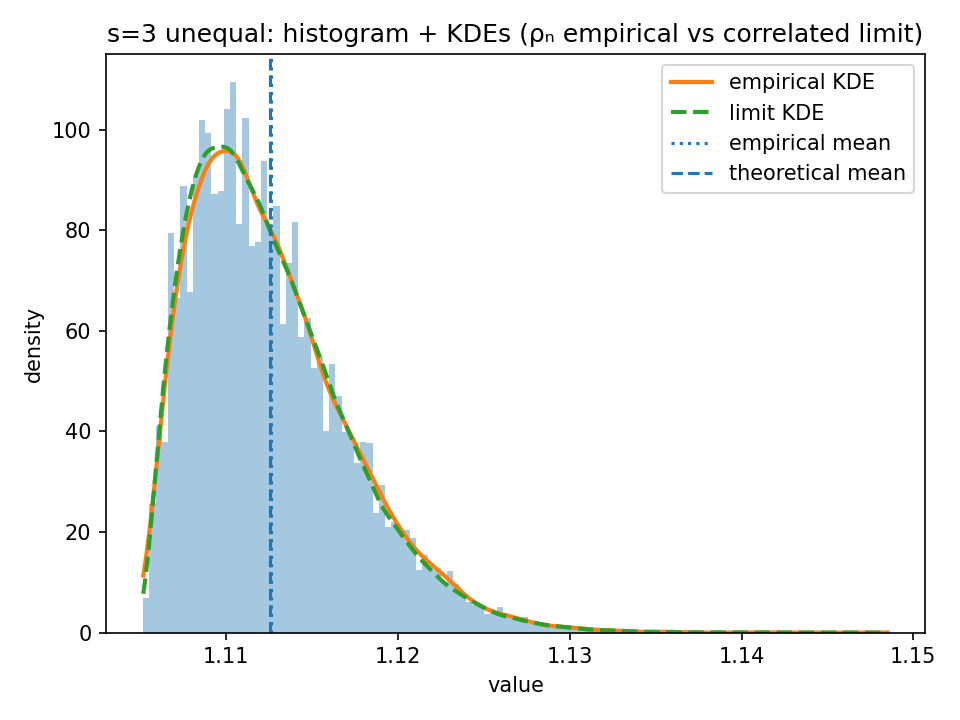}
  
  \caption{The histogram represents the empirical distribution, and its kernel density estimate (KDE) is shown as the green curve. These are compared against the theoretical limiting pdf (orange curve) for $\xi^{(3)} = \rho_\infty \cdot \max_{j \in \{j_1, j_2, j_3\}} G_j$.  The components of $\boldsymbol{G}$ have \textbf{(left)} equal variances and correlations; \textbf{(right)} unequal variances and are uncorrelated.}
  \label{fig:s3}
\end{figure}
\end{example}

\begin{example}
Assume $J=\{j_1,j_2,j_3,j_4\}$, equal variances $s_j=s$ for all $j\in J$, and equal pairwise correlation coefficient $\rho\in[-1/3,1]$. Then
\[
\mathbb E \left[\xi^{(4)}\right] = \rho_\infty\, c_4\,s \sqrt{1-\rho}, \qquad \operatorname{Var} \left(\xi^{(4)}\right) = \rho_\infty^{2}\,s^{2}\bigl(\rho+(1-\rho)\,v_4\bigr),
\]
where $c_4$ and $v_4$ are the mean and variance of the maximum of four independent standard normals, i.e. taking $Z_1,\ldots,Z_4 \stackrel{\text{i.i.d.}}{\sim} \mathcal{N}(0,1)$,
\[
c_4=\mathbb E \left[\max_{1\le k\le 4}Z_k\right]\approx 1.029375,\qquad v_4=\operatorname{Var} \left(\max_{1\le k\le 4}Z_k\right)\approx 0.491715.
\]
For $0\le\rho\le1$ this follows from the common-factor representation; for $-1/3\le\rho<0$ it follows from the mean/orthogonal-component decomposition of the equicorrelated Gaussian vector. The constants $c_4$ and $v_4$ do not have simple closed forms but can be computed numerically to arbitrary precision.
\end{example}

\subsection{Approximation error analysis} \label{subsec:approx_errors}

The CLT established above shows that for diagonal families, and more generally for families of upper(lower)-triangular matrices, the centered and scaled distribution of $\rho_n$ exhibits fluctuations at the $n^{-1/2}$ scale as $n\to\infty$. In numerical applications one works with finite $n$, so it is useful to quantify the finite-sample errors.

For a finite matrix family, the logarithmic variables are finitely supported. Such laws are not absolutely continuous, and a finitely supported law is not necessarily lattice-valued in the one-dimensional sense. Consequently, density-level Edgeworth expansions are not the right object for the matrix model. In this subsection we use finite-family expansions for expectations. The unique-maximizer case follows directly from the cumulant generating function. In the multi-maximizer case we state the leading finite-family expansion and, when a first-order weak Edgeworth expansion is available for the active finitely supported law, the corresponding next-order correction.

For later reference, note the exact multinomial identity: for every $q>0$,
\begin{equation}\label{eq:finite_multinomial_moment}
\mathbb E[\rho_n^q] = \sum_{\nu_1+\cdots+\nu_m=n} \binom{n}{\nu_1,\ldots,\nu_m} \prod_{i=1}^m p_i^{\nu_i} \left[ \max_{1\le j\le d} \exp\left(\sum_{i=1}^m \frac{\nu_i}{n}\log |\lambda_j^{(i)}|\right) \right]^q .
\end{equation}
This formula is exact for finite diagonal or triangular families and may be used directly when $m$ and $n$ are moderate.

\subsubsection{Unique maximizer}

\begin{theorem}\label{thm:edgeworth_exp_univariate}
Let $\rho_n(\mathcal{T},\mathbb P_{\mathcal{T}})$ be as in~\eqref{eq:rho_n_via_maxZ} and assume $J=\{j^\star\}$. Set
\[
\rho_\infty(\mathcal{T},\mathbb P_{\mathcal{T}}) = \rho_\infty^{(j^\star)}(\mathcal{T},\mathbb P_{\mathcal{T}}),
\]
and let $\kappa_r$ be the $r$-th cumulant of $Z^{(j^\star)}$ defined by~\eqref{def:Zvector}. Then, as $n\to\infty$,
\begin{align}
\mathbb E\big[\rho_n(\mathcal{T},\mathbb P_{\mathcal{T}})\big] &=\rho_\infty(\mathcal{T},\mathbb P_{\mathcal{T}}) \left(1+\frac{\kappa_2}{2n} + \frac{\kappa_3/6+\kappa_2^2/8}{n^2} + O(n^{-3})\right)+O(e^{-cn}),\label{eq:Avrho}\\
n \operatorname{Var}\big(\rho_n(\mathcal{T},\mathbb P_{\mathcal{T}})\big) &=\rho_\infty^2(\mathcal{T},\mathbb P_{\mathcal{T}}) \left(\kappa_2 + \frac{2\kappa_3+3\kappa_2^2}{2n} + O(n^{-2})\right)+O(ne^{-cn}),\label{eq:Varrho}
\end{align}
for some constant $c>0$. In particular, the exponentially small terms are negligible compared with all powers of $n^{-1}$.
\end{theorem}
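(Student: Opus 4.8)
The plan is to reduce $\rho_n$ to its single dominant coordinate $\rho_n^{(j^\star)}$ and then read off the moments from the \emph{exact} moment generating function of the centered sample mean, which is available because the log-eigenvalues are bounded and hence possess an entire cumulant generating function.

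First I would handle the reduction. Since $J=\{j^\star\}$ is a singleton, $\mu_j<\mu^\star$ strictly for every $j\neq j^\star$, and each $Z_k^{(j)}=\log|\lambda_{i_k}^{(j)}|$ is bounded (the family is finite with nonzero spectra). A Cramér/Hoeffding large-deviation estimate then gives $\mathbb P\big(\rho_n\neq\rho_n^{(j^\star)}\big)\le\mathbb P\big(\max_{j\neq j^\star}\rho_n^{(j)}\ge\rho_n^{(j^\star)}\big)\le C\mathrm e^{-cn}$ for some $c,C>0$. Because boundedness of the spectra confines $\rho_n,\rho_n^{(j^\star)}$ to a deterministic interval $[c_1,c_2]\subset(0,\infty)$, this yields $\big|\mathbb E[\rho_n^p]-\mathbb E[(\rho_n^{(j^\star)})^p]\big|\le C'\mathrm e^{-cn}=o(n^{-k})$ for every $k$ and for $p\in\{1,2\}$. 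It therefore suffices to establish~\eqref{eq:Avrho} and~\eqref{eq:Varrho} with $\rho_n$ replaced by $\rho_n^{(j^\star)}$.

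Next I would write $\rho_n^{(j^\star)}=\rho_\infty\,\mathrm e^{W_n}$, where $W_n=\tfrac1n\sum_{k=1}^n U_k$ and $U_k:=Z_k^{(j^\star)}-\mu^\star$ are i.i.d., centered and bounded. Letting $K_U$ denote the (entire) cumulant generating function of $U_1$, additivity and homogeneity of cumulants give the exact identity $\mathbb E[\mathrm e^{tW_n}]=\exp\!\big(nK_U(t/n)\big)=\exp\!\Big(\sum_{r\ge2}\tfrac{\kappa_r\,t^r}{r!\,n^{r-1}}\Big)$, with $\kappa_r=\kappa_r(Z^{(j^\star)})$ for $r\ge2$. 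In particular $\mathbb E[\rho_n^{(j^\star)}]=\rho_\infty\exp\!\big(nK_U(1/n)\big)$ and $\mathbb E[(\rho_n^{(j^\star)})^2]=\rho_\infty^2\exp\!\big(nK_U(2/n)\big)$. Expanding the inner series in powers of $1/n$ and then the exponential gives $\mathbb E[\mathrm e^{W_n}]=1+\tfrac{\kappa_2}{2n}+\tfrac{\kappa_3/6+\kappa_2^2/8}{n^2}+O(n^{-3})$, which is precisely~\eqref{eq:Avrho}; the $\kappa_2^2/8$ term originates from the square of the leading term $\kappa_2/(2n)$ inside the exponential. For the variance I would carry out the same expansion at $t=2$, obtaining $\mathbb E[\mathrm e^{2W_n}]=1+\tfrac{2\kappa_2}{n}+\tfrac{4\kappa_3/3+2\kappa_2^2}{n^2}+O(n^{-3})$, then subtract the square of the $t=1$ expansion to get $\operatorname{Var}(\mathrm e^{W_n})=\tfrac{\kappa_2}{n}+\tfrac{\kappa_3+3\kappa_2^2/2}{n^2}+O(n^{-3})$; multiplying by $\rho_\infty^2 n$ and rewriting $\kappa_3+3\kappa_2^2/2=(2\kappa_3+3\kappa_2^2)/2$ yields~\eqref{eq:Varrho}.

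The computations of the last step are purely mechanical once the exact formula is in place, the only care being the bookkeeping of the $n^{-2}$ coefficients. The hard part will be the reduction step: one must confirm that replacing the maximum over all $d$ coordinates by the single dominant one perturbs the moments only beyond every polynomial order in $1/n$, which rests on the strict spectral gap $\mu^\star>\max_{j\neq j^\star}\mu_j$ together with the boundedness of the log-eigenvalues. This boundedness is also what makes $K_U$ entire, so the power-series manipulation in the previous step is legitimate and, unlike in~\cref{lemma:EdgeUni}, no Cramér smoothness condition on the characteristic function is required here.
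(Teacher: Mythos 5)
Your proposal is correct, and its computational core coincides with the paper's: both rest on the exact identity $\mathbb E[\mathrm e^{tW_n}]=\exp\bigl(nK_U(t/n)\bigr)$ for the centered sample mean (the paper writes the same thing as $\mathbb E[\rho_n]=\rho_\infty\,\mathrm e^{K_n(\sigma_Z/\sqrt n)}$ in terms of the standardized sum), followed by expanding the cumulant series and then the outer exponential in powers of $1/n$; your bookkeeping at $t=1$ and $t=2$ reproduces \eqref{eq:Avrho} and \eqref{eq:Varrho} exactly. Where you genuinely differ---and improve on the paper---is twofold. First, the paper's proof simply asserts that for large $n$ the maximum over coordinates equals $\rho_n^{(j^\star)}$ and computes with that single coordinate; you supply the missing justification via a Hoeffding/large-deviation bound $\mathbb P\bigl(\rho_n\neq\rho_n^{(j^\star)}\bigr)\le C\mathrm e^{-cn}$, which exploits the strict gap $\mu^\star>\max_{j\neq j^\star}\mu_j$ and the boundedness of the log-eigenvalues, and which (together with the deterministic confinement of $\rho_n$ to a fixed interval) shows that the first two moments of $\rho_n$ and $\rho_n^{(j^\star)}$ differ by $o(n^{-k})$ for every $k$. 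This step is genuinely needed for \eqref{eq:Avrho}--\eqref{eq:Varrho} to hold for $\rho_n$ itself and not merely for its $j^\star$-th coordinate. Second, you correctly observe that \cref{lemma:EdgeUni}, and in particular the Cram\'er condition, is not required for the moment expansions: analyticity of the cumulant generating function near the origin (a consequence of boundedness) suffices, so the paper's appeal to the Edgeworth lemma at this point is cosmetic. One minor caveat: the cumulant generating function of a bounded variable need not be entire on $\mathbb C$ (the moment generating function may vanish off the real axis), but it is real-analytic on $\mathbb R$ and analytic in a neighborhood of $0$, which is all your power-series manipulation actually uses.
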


\begin{proof}
Let
\[
Z \coloneqq Z^{(j^\star)}, \qquad \mu_Z \coloneqq \mathbb E[Z]=\log \rho_\infty(\mathcal T,\mathbb P_{\mathcal T}),
\]
and define the auxiliary variable
\[
\widetilde\rho_n\coloneqq \exp\left(\frac1n\sum_{k=1}^n Z_k\right).
\]
Since $J=\{j^\star\}$, for every $j\ne j^\star$ the gap
\[
\Delta_j\coloneqq \mu_Z-\mu_j>0
\]
is positive. The variables $Z_k^{(j)}-Z_k^{(j^\star)}$ are bounded and have mean $-\Delta_j$. Hence, by Hoeffding's inequality (see, e.g., \cite[Lemma 2.2]{boucheron2013concentration} and \cite{hoeffding1963probability}), there exist constants $C,c>0$ such that
\[
\mathbb P\left(\rho_n\ne \widetilde\rho_n\right)
\le
\sum_{j\ne j^\star}\mathbb P\left(\frac1n\sum_{k=1}^n\bigl(Z_k^{(j)}-Z_k^{(j^\star)}\bigr)\ge0\right)
\le Ce^{-cn}.
\]
Since all eigenvalue moduli in the finite family are bounded, $|\rho_n^q-\widetilde\rho_n^{\,q}|\le C_q\mathbf 1_{\{\rho_n\ne\widetilde\rho_n\}}$ for $q=1,2$. Thus the first two moments of $\rho_n$ and $\widetilde\rho_n$ differ by $O(e^{-cn})$.

Let $K_Z(t) \coloneqq \log \mathbb E[\mathrm e^{tZ}]$ be the cumulant generating function. Since $Z$ is finitely supported, $K_Z$ is analytic and
\[
K_Z(t)=\mu_Z t + \sum_{r\ge 2}\frac{\kappa_r}{r!}\,t^r.
\]
Therefore
\[
\mathbb E[\widetilde\rho_n]=\left(\mathbb E[\mathrm e^{Z/n}]\right)^n=\exp\bigl(nK_Z(1/n)\bigr),
\]
with
\[
nK_Z(1/n)=\mu_Z+\frac{\kappa_2}{2n}+\frac{\kappa_3}{6n^2}+\frac{\kappa_4}{24n^3}+O(n^{-4}).
\]
Expanding the exponential gives the formula for the expectation. Similarly,
\[
\mathbb E[\widetilde\rho_n^{\,2}]=\left(\mathbb E[\mathrm e^{2Z/n}]\right)^n=\exp\bigl(nK_Z(2/n)\bigr),
\]
and
\[
nK_Z(2/n)=2\mu_Z+\frac{2\kappa_2}{n}+\frac{4\kappa_3}{3n^2}+\frac{2\kappa_4}{3n^3}+O(n^{-4}).
\]
Subtracting $\mathbb E[\widetilde\rho_n]^2$ from $\mathbb E[\widetilde\rho_n^{\,2}]$ and multiplying by $n$ yields~\eqref{eq:Varrho}. The exponentially small error estimated above completes the proof for $\rho_n$.
\end{proof}

\begin{remark}
Assume $J=\{j^\star\}$ and $\kappa_2>0$. By the preceding proof and the Borel--Cantelli lemma (see, e.g., \cite[Theorem 2.3.1]{durrett2019probability}), the inactive coordinates overtake the active coordinate only finitely often almost surely. Hence the law of the iterated logarithm for the scalar sums gives the sharp bounds
\[
\liminf_{n\to\infty}
\frac{\rho_n(\mathcal{T},\mathbb P_{\mathcal{T}})-\rho_\infty(\mathcal{T},\mathbb P_{\mathcal{T}})}
{\sigma_\infty^{(j^\star)}\sqrt{2n^{-1}\log\log n}}
=-1, \qquad \limsup_{n\to\infty}
\frac{\rho_n(\mathcal{T},\mathbb P_{\mathcal{T}})-\rho_\infty(\mathcal{T},\mathbb P_{\mathcal{T}})}
{\sigma_\infty^{(j^\star)}\sqrt{2n^{-1}\log\log n}}
=1.
\]
Thus the largest almost-sure fluctuations scale as $n^{-1/2}\sqrt{\log\log n}$, whereas the typical Gaussian fluctuations scale as $n^{-1/2}$.
\end{remark}

\subsubsection{Multiple maximizers}

When $|J|>1$, the function determining the fluctuation is no longer linear but the maximum of the active coordinates. The following finite-family statement separates the always-valid leading terms from the optional first Edgeworth correction.

\begin{theorem}\label{thm:edgeworth_exp_multivariate}
Let $J=\{j_1,\ldots,j_s\}$ with $s>1$, and let $\Sigma_J =\Sigma_{\mathcal{T}, \mathbb{P}_{\mathcal{T}}}$ be the active covariance matrix defined in~\eqref{def:Sigma}--\eqref{def:mu_j}. Assume $\Sigma_J$ is positive definite. Define
\[
S_n \coloneqq \frac{1}{\sqrt n}\sum_{k=1}^n(\boldsymbol Z_{k,J}-\boldsymbol\mu_J),
\qquad
M_n\coloneqq \max_{j\in J} (S_n)_j,
\]
and let $\boldsymbol U\sim\mathcal N(\boldsymbol 0,\Sigma_J)$ with
\[
M(\boldsymbol U)\coloneqq \max_{j\in J}U_j.
\]
Set
\[
m_k\coloneqq \mathbb E[M(\boldsymbol U)^k]\quad(k=1,2,3),
\qquad
v_0\coloneqq m_2-m_1^2,
\qquad
v_1\coloneqq m_3-m_1m_2.
\]
Then, for the finite matrix-family model,
\begin{align}
\mathbb E[\rho_n]&=\rho_\infty\left(1+\frac{m_1}{\sqrt n}+o(n^{-1/2})\right),\label{eq:multi-leading-mean}\\
n\operatorname{Var}(\rho_n)&=\rho_\infty^2\left(v_0+o(1)\right).\label{eq:multi-leading-var}
\end{align}
Suppose in addition that the active centered finite-support law satisfies the first-order weak Edgeworth expansion for the test functions $f(\boldsymbol u)=M(\boldsymbol u)$ and $f(\boldsymbol u)=M(\boldsymbol u)^2$. More precisely, let $\alpha!=(\alpha_1!\cdots\alpha_s!)$, $\partial^\alpha=\partial_{u_1}^{\alpha_1}\cdots\partial_{u_s}^{\alpha_s}$, and let $\varphi_{\Sigma_J}$ denote the density of $\mathcal N(\boldsymbol 0,\Sigma_J)$. Define the generalized cumulants of $\boldsymbol Z_{1,J}-\boldsymbol\mu_J$ by
\[
\kappa_\alpha
\coloneqq
\left.\partial_\beta^\alpha\log\mathbb E\exp\bigl(\boldsymbol\beta^\top(\boldsymbol Z_{1,J}-\boldsymbol\mu_J)\bigr)\right|_{\boldsymbol\beta=0}.
\]
The $\Sigma_J$-Hermite polynomials are
\[
H_\alpha^{\Sigma_J}(\boldsymbol u)
\coloneqq
(-1)^{|\alpha|}\frac{1}{\varphi_{\Sigma_J}(\boldsymbol u)}\partial^\alpha\varphi_{\Sigma_J}(\boldsymbol u).
\]
Assume
\begin{equation}\label{eq:weak_edgeworth_expectation}
\mathbb E[f(S_n)]
=
\mathbb E[f(\boldsymbol U)]
+\frac{1}{\sqrt n}\sum_{|\alpha|=3}\frac{\kappa_\alpha}{\alpha!}\,
\mathbb E\bigl[f(\boldsymbol U)H_\alpha^{\Sigma_J}(\boldsymbol U)\bigr]
+o(n^{-1/2})
\end{equation}
for these two functions. Define
\[
h_\alpha\coloneqq\mathbb E\bigl[M(\boldsymbol U)H_\alpha^{\Sigma_J}(\boldsymbol U)\bigr],
\qquad
r_\alpha\coloneqq\mathbb E\bigl[M(\boldsymbol U)^2H_\alpha^{\Sigma_J}(\boldsymbol U)\bigr].
\]
Then
\begin{align}
\label{eq:multi-mean-expansion}
\mathbb E[\rho_n]
&=
\rho_\infty\Bigg[1+\frac{m_1}{\sqrt n}
+\frac{1}{n}\bigg(\frac{m_2}{2}+\sum_{|\alpha|=3}\frac{\kappa_\alpha}{\alpha!}h_\alpha\bigg)
+o(n^{-1})\Bigg],\\[3pt]
\label{eq:multi-var-expansion}
n\operatorname{Var}(\rho_n)
&=
\rho_\infty^2\Bigg[v_0
+\frac{1}{\sqrt n}\Bigg(v_1+
\sum_{|\alpha|=3}\frac{\kappa_\alpha}{\alpha!}\bigl(r_\alpha-2m_1h_\alpha\bigr)\Bigg)
+o(n^{-1/2})\Bigg].
\end{align}
\end{theorem}

\begin{remark}
For finite non-lattice laws satisfying the usual Cram\'er condition,~\eqref{eq:weak_edgeworth_expectation} is the standard weak Edgeworth expansion. In a genuinely lattice case, the exact multinomial formula~\eqref{eq:finite_multinomial_moment} remains valid, but the next-order weak Edgeworth term may require the corresponding periodic lattice correction.
\end{remark}

\begin{proof}
For every inactive coordinate $j\notin J$, the gap $\Delta_j=\mu^\star-\mu_j$ is positive. As in the proof of~\cref{thm:edgeworth_exp_univariate}, Hoeffding's inequality~\cite{hoeffding1963probability} gives
\[
\mathbb P\left(\rho_n\ne \rho_\infty\exp\left(\frac{M_n}{\sqrt n}\right)\right)\le Ce^{-cn}
\]
for some $C,c>0$. Hence the error made by replacing $\rho_n$ with $\rho_\infty\exp(M_n/\sqrt n)$ is exponentially small in the first two moments.

By the multivariate CLT, $S_n\Rightarrow\boldsymbol U$. Since the variables $S_n$ have uniformly bounded moments of every order and $M_n$ has at most linear growth in $S_n$, the family $\{M_n^q:q\le3\}$ is uniformly integrable. Therefore $\mathbb E[M_n]\to m_1$ and $\mathbb E[M_n^2]\to m_2$. Expanding
\[
\exp(M_n/\sqrt n)=1+\frac{M_n}{\sqrt n}+\frac{M_n^2}{2n}+O\left(\frac{|M_n|^3}{n^{3/2}}\right)
\]
and using uniform integrability gives~\eqref{eq:multi-leading-mean}. The same argument applied to $\exp(2M_n/\sqrt n)$ gives~\eqref{eq:multi-leading-var}. Assume now that~\eqref{eq:weak_edgeworth_expectation} holds. Then
\[
\mathbb E[M_n]=m_1+\frac{1}{\sqrt n}\sum_{|\alpha|=3}\frac{\kappa_\alpha}{\alpha!}h_\alpha+o(n^{-1/2}),
\]
and
\[
\mathbb E[M_n^2]=m_2+\frac{1}{\sqrt n}\sum_{|\alpha|=3}\frac{\kappa_\alpha}{\alpha!}r_\alpha+o(n^{-1/2}),
\qquad
\mathbb E[M_n^3]=m_3+o(1).
\]
Substituting these identities into the Taylor expansions of $\mathbb E\exp(M_n/\sqrt n)$ and $\mathbb E\exp(2M_n/\sqrt n)$ yields~\eqref{eq:multi-mean-expansion} and~\eqref{eq:multi-var-expansion} after subtracting the square of the mean. The exponentially small inactive-coordinate error does not affect the displayed polynomial orders.
\end{proof}

\subsection*{Numerical experiments}

We validate the finite-family formulas using exact evaluations of~\eqref{eq:finite_multinomial_moment}, so the comparisons below contain no Monte Carlo noise. The examples are chosen to reflect the three regimes of~\cref{thm:edgeworth_exp_univariate,thm:edgeworth_exp_multivariate}: aligned unique maximizer, non-aligned unique maximizer, and coalescence with $|J|>1$.

\begin{enumerate}[label=\arabic*)]
\item \textbf{Aligned unique maximizer.} Consider
\[
D_1=\operatorname{diag}(4.0,1.3),\quad
D_2=\operatorname{diag}(2.2,1.5),\quad
D_3=\operatorname{diag}(3.4,1.1),
\]
with probabilities $\boldsymbol p=(0.5,0.3,0.2)$. Here the first coordinate dominates matrix-by-matrix, so $J=\{1\}$ and $\rho_n=\widetilde\rho_n$ exactly. Therefore
\[
\mathbb E[\rho_n^q]
=
\left(\sum_{i=1}^3 p_i |\lambda_1^{(i)}|^{q/n}\right)^n,
\qquad q=1,2.
\]

\item \textbf{Non-aligned unique maximizer.} Consider
\[
D_1=\operatorname{diag}(4.5,1.1,0.7),\quad
D_2=\operatorname{diag}(2.0,3.8,0.8),\quad
D_3=\operatorname{diag}(3.0,1.4,0.6),
\]
with $\boldsymbol p=(0.55,0.25,0.20)$. Then $J=\{1\}$ in the sense of expected growth, but the second coordinate dominates on $D_2$. This is the regime where~\cref{thm:edgeworth_exp_univariate} requires the exponentially small correction between $\rho_n$ and the surrogate
\[
\widetilde\rho_n
\coloneqq
\exp\left(\frac1n\sum_{k=1}^n Z_k^{(1)}\right).
\]

\item \textbf{Coalescence models.} For $|J|=2$ we take
\[
D_1^{(2)}=\operatorname{diag}(8.0,2.0,0.8),\quad
D_2^{(2)}=\operatorname{diag}(2.0,8.0,0.8),\quad
D_3^{(2)}=\operatorname{diag}(5.0,5.0,0.8),
\]
with $\boldsymbol p=(1/3,\,1/3,\,1/3)$. Then $J=\{1,2\}$ and $\Sigma_J$ is positive definite. For $|J|=3$ we take
\begin{align*}
D_1^{(3)}& =\operatorname{diag}(9.0,2.0,2.0,0.9),\quad
D_2^{(3)}=\operatorname{diag}(2.0,9.0,2.0,0.9),\\
D_3^{(3)}&=\operatorname{diag}(2.0,2.0,9.0,0.9),\quad
D_4^{(3)}=\operatorname{diag}(4.0,4.0,4.0,0.9),
\end{align*}
with $\boldsymbol p=(1/4,\,1/4,\,1/4,\,1/4)$. Then $J=\{1,2,3\}$ and again $\Sigma_J$ is positive definite. In both coalescence models, the exact mean and variance are computed from~\eqref{eq:finite_multinomial_moment}, while the asymptotic constants come from the Gaussian formulas in the preceding examples.
\end{enumerate}

\Cref{fig:edgeworth-finite-unique} compares the exact first two moments with the leading and corrected formulas from~\cref{thm:edgeworth_exp_univariate}. In the aligned model, where $\rho_n=\widetilde\rho_n$ exactly, the corrected approximation improves the mean error from order $n^{-1}$ to order $n^{-3}$ and the scaled-variance error from order $n^{-1}$ to order $n^{-2}$. In the non-aligned model the same improvement is visible after a short transient, which is precisely the regime where the exponentially small replacement error becomes smaller than the polynomial terms.

\begin{figure}[ht!]
  \centering
  \includegraphics[width=.92\textwidth]{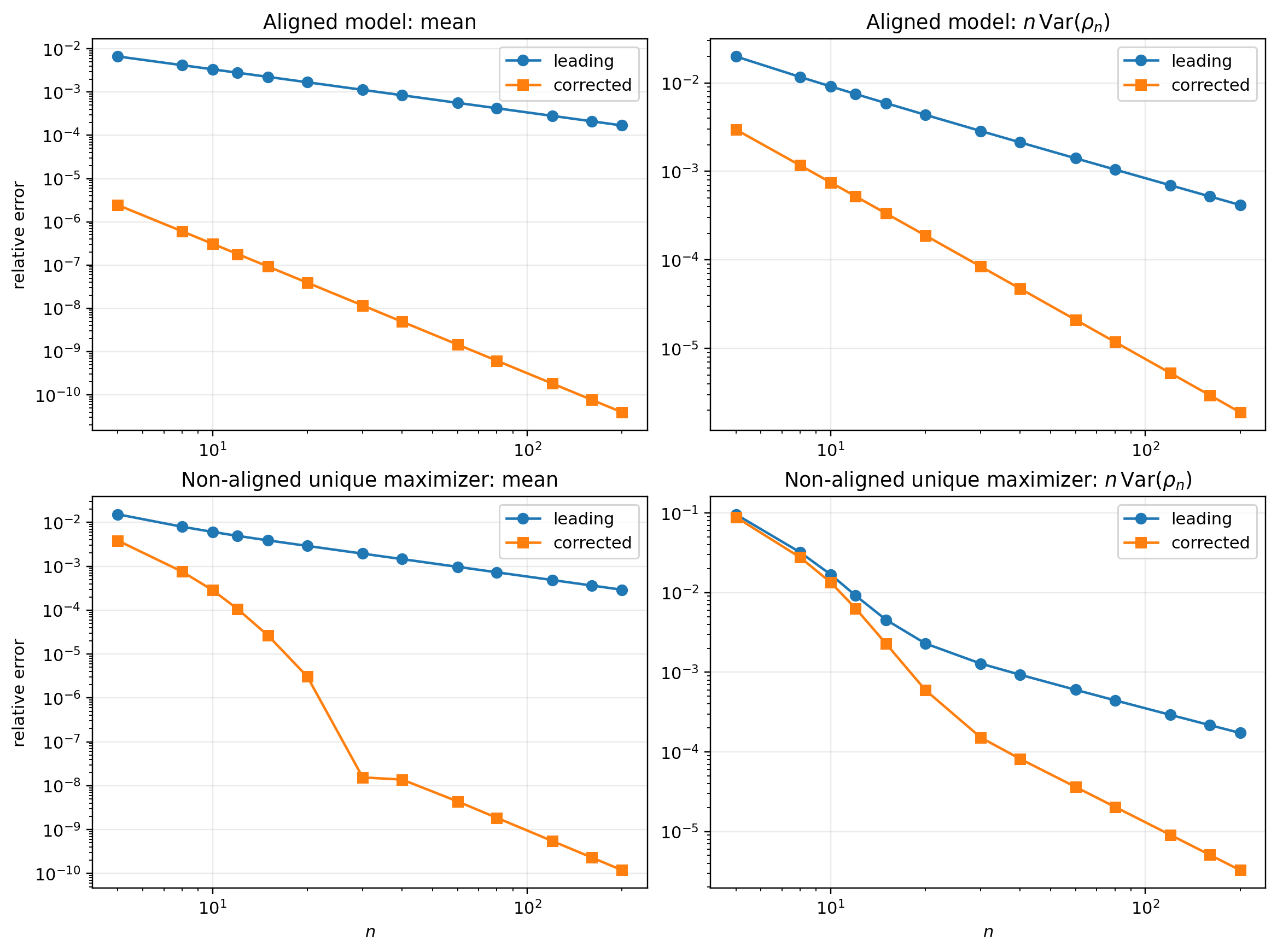}
  \caption{Finite-family validation of~\cref{thm:edgeworth_exp_univariate}. Top row: aligned unique-maximizer model. Bottom row: non-aligned unique-maximizer model. Blue circles show the leading approximations, orange squares the corrected moment formulas from~\eqref{eq:Avrho}--\eqref{eq:Varrho}, and the reference values are exact moments computed from~\eqref{eq:finite_multinomial_moment}.}
  \label{fig:edgeworth-finite-unique}
\end{figure}

The proof of~\cref{thm:edgeworth_exp_univariate} relies on the fact that, in the non-aligned unique-maximizer case, inactive coordinates overtake the active coordinate only with exponentially small probability. \Cref{fig:edgeworth-finite-switch} verifies this directly. The left panel plots $\mathbb P(\rho_n\neq\widetilde\rho_n)$, while the right panel compares the exact moments of $\rho_n$ with those of the surrogate $\widetilde\rho_n$. Both discrepancies decay exponentially, in agreement with the proof.

\begin{figure}[ht!]
  \centering
  \includegraphics[width=.92\textwidth]{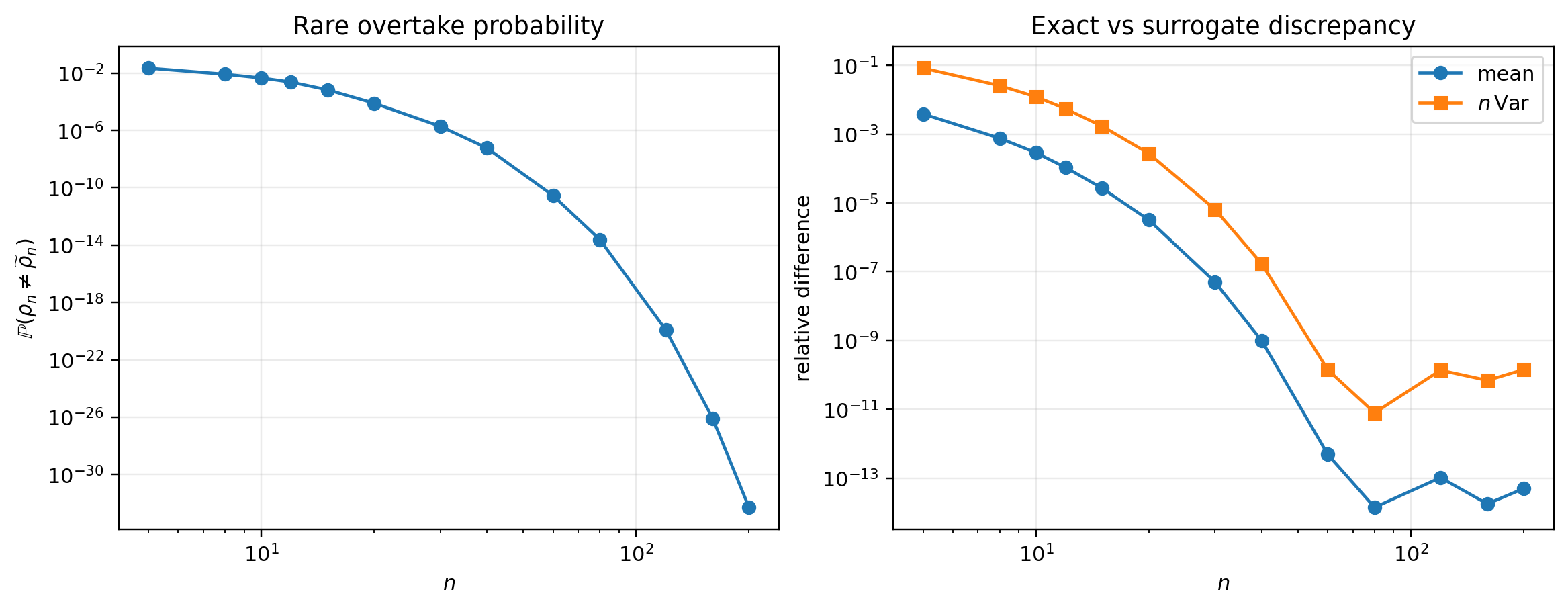}
  \caption{Non-aligned unique-maximizer experiment. Left: exact probability that an inactive coordinate overtakes the active one. Right: relative difference between the exact moments of $\rho_n$ and those of the surrogate $\widetilde\rho_n$ used in the proof of~\cref{thm:edgeworth_exp_univariate}.}
  \label{fig:edgeworth-finite-switch}
\end{figure}

For the coalescence models, the finite-family law is discrete, so the first lower-order weak Edgeworth term may carry periodic lattice corrections. For that reason, \cref{fig:edgeworth-finite-multimax} focuses on the universal leading quantities from~\eqref{eq:multi-leading-mean}--\eqref{eq:multi-leading-var}. In the $|J|=2$ model the theoretical constants are $m_1\approx 0.45156$ and $v_0\approx 0.12746$; in the $|J|=3$ model they are $m_1\approx 0.63644$ and $v_0\approx 0.13479$. The exact scaled mean and variance converge to these limits, with a noticeably slower approach than in the unique-maximizer regime, which is consistent with the absence of a universal first correction.

\begin{figure}[ht!]
  \centering
  \includegraphics[width=.92\textwidth]{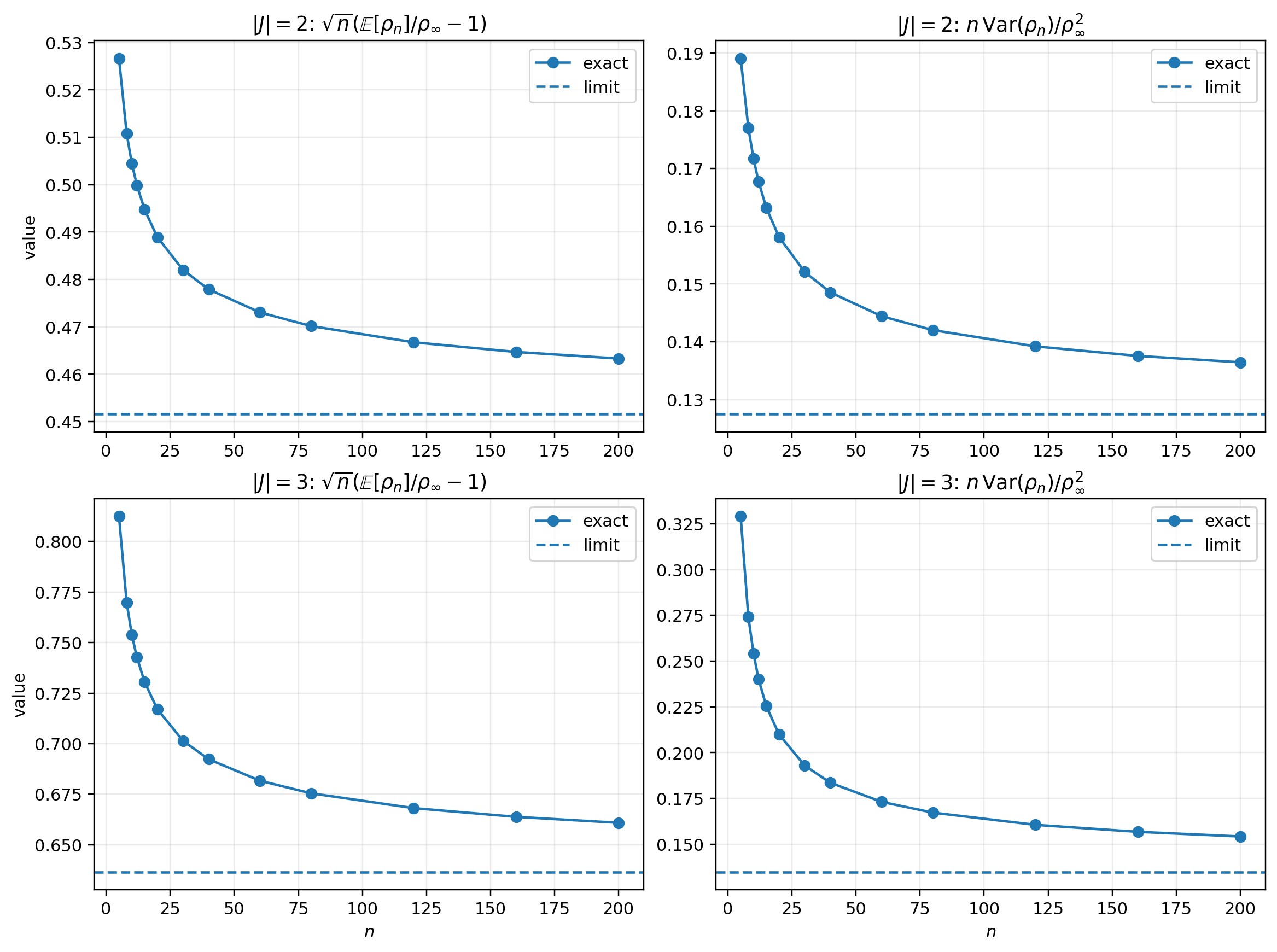}
  \caption{Finite-family coalescence experiments. Top row: $|J|=2$ model. Bottom row: $|J|=3$ model. Solid curves are exact values computed from~\eqref{eq:finite_multinomial_moment}; dashed lines show the limiting constants $m_1$ and $v_0$ from~\eqref{eq:multi-leading-mean}--\eqref{eq:multi-leading-var}.}
  \label{fig:edgeworth-finite-multimax}
\end{figure}

\subsection{Large deviations and tail probabilities} \label{subsec:large_deviations}

The CLT and its Edgeworth refinements characterize the \emph{typical} fluctuations of $\rho_n$ around $\rho_\infty$ at the scale $n^{-1/2}$. However, the motivating questions posed in the introduction, how likely is worst-case (JSR) growth, and how concentrated are typical trajectories around $\rho_\infty$, require estimates on the tails of the distribution of $\rho_n$, far from the Gaussian regime. Such estimates are provided by a \emph{large deviation principle} (LDP), which we now establish by exploiting the i.i.d. structure of the log-spectral-radius.

\begin{theorem}\label{thm:LDP}
Let $\mathcal{T} = \{T_1, \ldots, T_m\}$ be a family of upper(lower)-triangular matrices equipped with a probability distribution $\mathbb{P}_{\mathcal{T}}$. Assume there exists $j^\star\in\{1,\dots,d\}$ such that
\[
|\lambda_{j^\star}^{(i)}|>\max_{j\ne j^\star} |\lambda_j^{(i)}|,
\qquad i=1,\dots,m.
\]
Define
\begin{equation}\label{def:Lambda_LDP}
\Lambda(t) \coloneqq \log \sum_{i=1}^m p_i \, \bigl|\lambda_{j^\star}^{(i)}\bigr|^t, \qquad t \in \mathbb{R},
\end{equation}
and
\begin{equation}\label{def:rate_function}
I(z) \coloneqq \sup_{t \in \mathbb{R}} \left( tz - \Lambda(t) \right), \qquad z \in \mathbb{R}.
\end{equation}
Then $\rho_n$ satisfies a large deviation principle with speed $n$ and good rate function
\[
\widetilde I(r)\coloneqq I(\log r),\qquad r>0.
\]
That is, for any Borel set $B \subseteq (0, \infty)$,
\begin{equation}\label{eq:LDP_bounds}
-\inf_{r \in B^\circ} \widetilde{I}(r) \leq \liminf_{n \to \infty} \frac{1}{n} \log \mathbb{P}(\rho_n \in B) \leq \limsup_{n \to \infty} \frac{1}{n} \log \mathbb{P}(\rho_n \in B) \leq -\inf_{r \in \overline{B}} \widetilde{I}(r).
\end{equation}
Moreover, $I\ge0$ and $I(\log\rho_\infty)=0$. If $Z=\log |\lambda_{j^\star}^{(I)}|$ is non-degenerate, then $I$ is strictly convex on the interior of its effective domain. If $Z$ is degenerate, then the LDP is trivial: $I$ is $0$ at $\log\rho_\infty$ and $+\infty$ away from this point.
\end{theorem}

\begin{proof}
Since the index $j^\star$ dominates every matrix in the family, the RSR satisfies
\[
\rho_n = \exp\biggl(\frac{1}{n}\sum_{k=1}^n Z_k^{(j^\star)}\biggr),
\]
where $Z_k^{(j^\star)} = \log |Y_k^{(j^\star)}|$. The log-moment generating function of $Z_1^{(j^\star)}$ is
\[
\log \mathbb{E}\bigl[\mathrm{e}^{t Z_1^{(j^\star)}}\bigr] = \log \sum_{i=1}^m p_i \, \bigl|\lambda_{j^\star}^{(i)}\bigr|^t = \Lambda(t),
\]
which is finite for all $t \in \mathbb{R}$ on the support of the law. By Cram\'er's theorem (see, e.g.,~\cite[Chapter~27]{klenke2014probability}), $n^{-1}\sum_{k=1}^n Z_k^{(j^\star)}$ satisfies an LDP with speed $n$ and good rate function $I$. The map $z\mapsto \mathrm e^z$ is continuous, so the contraction principle (see~\cite[Theorem~27.5]{klenke2014probability}) yields the LDP for $\rho_n$ with rate function $\widetilde I(r)=I(\log r)$.

The minimum of $I$ is attained at $z=\Lambda'(0)=\mathbb E Z_1^{(j^\star)}=\log\rho_\infty$, where $I(\log\rho_\infty)=0$. If $Z$ is non-degenerate, then $\Lambda$ is strictly convex and essentially smooth on the finite interval generated by the support, which gives strict convexity of $I$ on the interior of its effective domain. If $Z$ is deterministic, the stated degenerate form follows from Cram\'er's theorem.
\end{proof}

The LDP provides exponential-rate estimates for the probability that $\rho_n$ deviates from $\rho_\infty$ in either direction. These estimates can be connected directly to the JSR and LSR, thereby quantifying the likelihood of extreme growth scenarios.

\begin{corollary}\label{cor:tail_JSR_LSR}
Under the hypotheses of~\cref{thm:LDP}, assume that $Z=\log |\lambda_{j^\star}^{(I)}|$ is non-degenerate. Let $I_+\coloneqq\{i \ : \ p_i>0\}$ and define the extremal values by
\[
\hat{\lambda} \coloneqq \max_{i\in I_+} \bigl|\lambda_{j^\star}^{(i)}\bigr|, \qquad \check{\lambda} \coloneqq \min_{i\in I_+} \bigl|\lambda_{j^\star}^{(i)}\bigr|.
\]
If all $p_i$ are positive and the aligned dominance assumption~\eqref{def:aligned dom convergence} holds, then these coincide with the JSR and LSR of the family, respectively. The following hold:
\begin{enumerate}[label=\textnormal{(\roman*)}]
\item For any $r \in (\rho_\infty, \hat{\lambda}]$,
\begin{equation}\label{eq:upper_tail}
\lim_{n \to \infty} \frac{1}{n} \log \mathbb{P}(\rho_n \geq r) = -I(\log r).
\end{equation}
\item For any $r \in [\check{\lambda}, \rho_\infty)$,
\begin{equation}\label{eq:lower_tail}
\lim_{n \to \infty} \frac{1}{n} \log \mathbb{P}(\rho_n \leq r) = -I(\log r).
\end{equation}
\item For every fixed $\delta$ satisfying $0<\delta<\hat\lambda-\rho_\infty$,
\begin{equation}\label{eq:JSR_tail}
\mathbb{P}\bigl(\rho_n \geq \hat{\lambda} - \delta\bigr) = \exp\bigl(-n \, I\bigl(\log(\hat{\lambda} - \delta)\bigr) + o(n)\bigr).
\end{equation}
If $\delta\ge \hat\lambda-\rho_\infty$, then the threshold is at or below the typical value and the exponential rate is $0$ rather than $-I(\log(\hat\lambda-\delta))$.
\end{enumerate}
\end{corollary}

\begin{proof}
For $r\in(\rho_\infty,\hat\lambda)$ and $r\in(\check\lambda,\rho_\infty)$, the statements follow from~\eqref{eq:LDP_bounds} applied to $B=[r,\infty)$ and $B=(0,r]$, respectively. Since $Z$ is non-degenerate, $I$ is continuous and strictly increasing on $(\log\rho_\infty,\log\hat\lambda)$ and strictly decreasing on $(\log\check\lambda,\log\rho_\infty)$, so the infima over the interior and closure of these sets coincide. It remains only to discuss the endpoints. Let
\[
p_{\max}\coloneqq \sum_{i\in I_+:\, |\lambda_{j^\star}^{(i)}|=\hat\lambda}p_i,
\qquad
p_{\min}\coloneqq \sum_{i\in I_+:\, |\lambda_{j^\star}^{(i)}|=\check\lambda}p_i.
\]
Then
\[
\mathbb P(\rho_n\ge \hat\lambda)=p_{\max}^n,
\qquad
\mathbb P(\rho_n\le \check\lambda)=p_{\min}^n.
\]
Moreover, the Legendre transform satisfies $I(\log\hat\lambda)=-\log p_{\max}$ and $I(\log\check\lambda)=-\log p_{\min}$, obtained by letting $t\to+\infty$ and $t\to-\infty$ in~\eqref{def:rate_function}, respectively. This proves the endpoint cases. Statement (iii) is (i) with $r=\hat\lambda-\delta$, under the stated restriction on $\delta$.
\end{proof}

\begin{remark}
In the non-degenerate case, the rate function $\widetilde{I}$ has a natural local interpretation. Near $r = \rho_\infty$, a second-order Taylor expansion using $I''(\log\rho_\infty) = 1/\sigma_Z^2$ yields
\[
\widetilde{I}(r) \approx \frac{(\log r - \log \rho_\infty)^2}{2\sigma_Z^2} \approx \frac{(r - \rho_\infty)^2}{2\sigma_\infty^2},
\]
recovering the Gaussian fluctuations of the CLT. For larger deviations, the full rate function captures the non-Gaussian structure of the tails. On the interior of the effective domain, the rate function is explicitly computable from~\eqref{def:Lambda_LDP}--\eqref{def:rate_function}: the maximizer $t^\star$ satisfies
\[
\Lambda'(t^\star) = \log r, \qquad I(\log r) = t^\star \log r - \Lambda(t^\star).
\]
\end{remark}

When multiple maximizers are present, the LDP extends through the multivariate Cram\'er theorem and the contraction principle.

\begin{theorem}\label{thm:LDP_multi}
Let $\mathcal{T}$ be as in~\cref{thm:clt_general} and define the multivariate log-moment generating function
\[
\Lambda(\boldsymbol{t}) \coloneqq \log \sum_{i=1}^m p_i \exp\!\left(\sum_{j=1}^d t_j \log \bigl|\lambda_j^{(i)}\bigr|\right), \qquad \boldsymbol{t} \in \mathbb{R}^d,
\]
and the multivariate rate function $I(\boldsymbol{z}) := \sup_{\boldsymbol{t} \in \mathbb{R}^d}\bigl(\boldsymbol{t} \cdot \boldsymbol{z} - \Lambda(\boldsymbol{t})\bigr)$. Then $\rho_n$ satisfies an LDP with speed $n$ and good rate function
\begin{equation}\label{eq:rate_multi}
\widetilde{I}(r) \coloneqq \inf \left\{ I(\boldsymbol{z}) \ : \ \boldsymbol{z} \in \mathbb{R}^d, \; \max_{1 \leq j \leq d} z_j = \log r \right\}, \qquad r > 0.
\end{equation}
\end{theorem}

\begin{proof}
Since $\boldsymbol{Z}_1$ takes finitely many values, $\Lambda(\boldsymbol{t})$ is finite for all $\boldsymbol{t} \in \mathbb{R}^d$. The multivariate Cram\'er theorem gives an LDP for $\frac{1}{n}\sum_{k=1}^n \boldsymbol{Z}_k$ in $\mathbb{R}^d$ with rate function $I$. The map $\boldsymbol{z} \mapsto \exp(\max_j z_j)$ is continuous, so the contraction principle yields the LDP as above.
\end{proof}

\subsection*{Numerical experiments}

We validate the LDP of~\cref{thm:LDP} and~\cref{cor:tail_JSR_LSR} by Monte Carlo simulation. Consider $m=3$ diagonal matrices in $\mathbb R^{3\times 3}$, e.g.
\[
D_1=\operatorname{diag}(3.5,1.2,0.8),\quad
D_2=\operatorname{diag}(2.0,1.5,0.6),\quad
D_3=\operatorname{diag}(4.0,0.9,1.1),
\]
with $\boldsymbol{p}=(0.3,0.5,0.2)$. The dominant eigenvalues are $|\lambda_1^{(i)}|\in\{3.5,2.0,4.0\}$, giving $\rho_\infty\approx 2.7174$, $\hat\lambda=4.0$ (JSR), and $\check\lambda=2.0$ (LSR).

For each product length $n\in\{10,20,50,100,200,500\}$, we generate $N_{\mathrm{MC}}=5\times 10^5$ independent realizations of $\rho_n$ and estimate the tail probabilities $\mathbb P(\rho_n\ge r)$ and $\mathbb P(\rho_n\le r)$ for several values of $r$ above and below $\rho_\infty$. 

\cref{fig:ldp-validation} displays the empirical rates $-\frac{1}{n}\log\mathbb P(\rho_n\ge r)$ (upper tail, left panel) and $-\frac{1}{n}\log\mathbb P(\rho_n\le r)$ (lower tail, right panel) as a function of $n$, together with the theoretical rate $\widetilde I(r)$ (dashed lines).

In both panels, the empirical rates converge to the predicted values as $n$ increases, confirming the exponential decay rates given by~\cref{cor:tail_JSR_LSR}.

\begin{figure}[ht!]
  \centering
  \includegraphics[width=.85\textwidth]{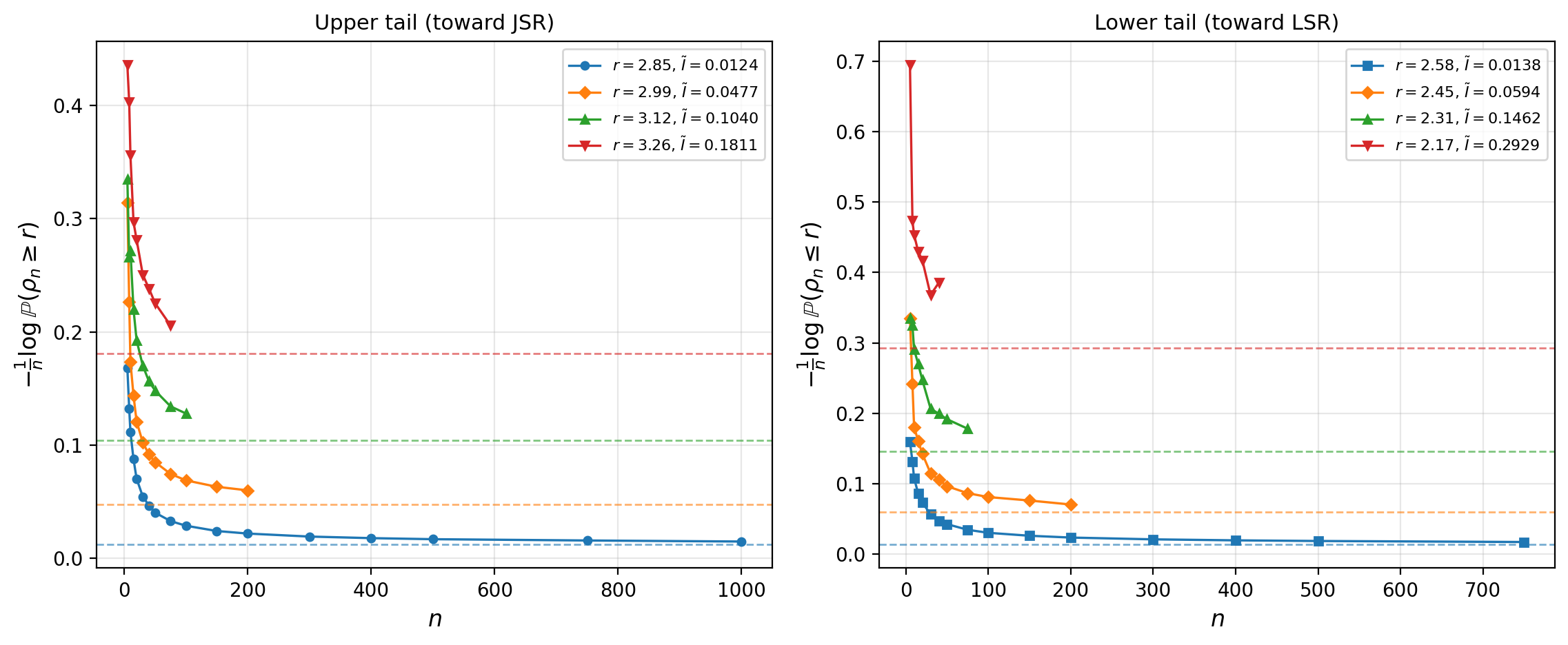}
  \caption{Validation of~\cref{thm:LDP}. Empirical rates $-\frac1n\log\mathbb P(\rho_n\ge r)$ (left) and $-\frac1n\log\mathbb P(\rho_n\le r)$ (right) for several threshold values $r$. Dashed lines indicate the theoretical $\widetilde I(r)=I(\log r)$ from~\eqref{def:rate_function}. In both tails, the MC estimates converge to the predicted rates as $n$ increases.}
  \label{fig:ldp-validation}
\end{figure}

\subsection{Dependence on the probability distribution} \label{subsec:rho_infty_prob}

The limit $\rho_\infty$ established in~\cref{thm:LLN gen diag} depends on $\boldsymbol{p} = (p_1, \ldots, p_m)$, and it is natural to ask how this dependence is structured. The following proposition answers the question and connects the RSR to the JSR and the LSR.

\begin{proposition}\label{prop:rho_infty_p}
Let $\mathcal{S} = \{D_1, \ldots, D_m\}$ be a diagonal family, and let $\Delta_m := \{\boldsymbol{p} \in [0,1]^m : \sum_i p_i = 1\}$ denote the probability simplex. Then $\boldsymbol{p} \mapsto \rho_\infty(\mathcal{S}, \boldsymbol{p})$, defined by~\eqref{def:rho_infty2}, satisfies:
\begin{enumerate}[label=\textup{(\roman*)}]
\item The function $\boldsymbol{p} \mapsto \log \rho_\infty(\mathcal{S}, \boldsymbol{p})$ is convex on $\Delta_m$.

\item The function $\boldsymbol{p} \mapsto \rho_\infty(\mathcal{S}, \boldsymbol{p})$ is continuous on $\Delta_m$.

\item For diagonal families,
\begin{equation}\label{eq:JSR_LSR_bound}
\check{\rho}(\mathcal S) = \min_{\boldsymbol p\in\Delta_m}\rho_\infty(\mathcal S,\boldsymbol p), \qquad \hat{\rho}(\mathcal S) = \max_{\boldsymbol p\in\Delta_m}\rho_\infty(\mathcal S,\boldsymbol p),
\end{equation}
and therefore $\check{\rho}(\mathcal S)\le \rho_\infty(\mathcal S,\boldsymbol p)\le \hat{\rho}(\mathcal S)$ for $\boldsymbol p\in\Delta_m$. Moreover,
\[
\hat{\rho}(\mathcal{S}) = \max_{1 \leq j \leq d} \max_{1 \leq i \leq m} |\lambda_j^{(i)}| = \sup_{\boldsymbol{p} \in \Delta_m} \rho_\infty(\mathcal{S}, \boldsymbol{p}),
\]
whereas the lower spectral radius is given by the convex optimization problem
\begin{equation}\label{eq:lsr_lp}
\log\check\rho(\mathcal S)
=
\min_{\boldsymbol p\in\Delta_m}\max_{1\le j\le d}\sum_{i=1}^m p_i\log |\lambda_j^{(i)}|.
\end{equation}

\item For $\boldsymbol p$ in the interior of $\Delta_m$ such that $J(\boldsymbol p)=\{j^\star\}$, the function $\boldsymbol p\mapsto \rho_\infty$ is differentiable along tangent directions $h=(h_1,\dots,h_m)$ satisfying $\sum_{i=1}^m h_i=0$, and
\begin{equation}\label{eq:rho_infty_gradient}
D\log \rho_\infty(\mathcal S,\boldsymbol p)[h] = \sum_{i=1}^m h_i \log |\lambda_{j^\star}^{(i)}|,
\end{equation}
equivalently,
\[
D\rho_\infty(\mathcal S,\boldsymbol p)[h] = \rho_\infty(\mathcal S,\boldsymbol p) \sum_{i=1}^m h_i \log |\lambda_{j^\star}^{(i)}|.
\]
In particular, if one transfers mass from index $k$ to index $i$, i.e. $h=e_i-e_k$, then
\[
D\rho_\infty(\mathcal S,\boldsymbol p)[e_i-e_k] = \rho_\infty(\mathcal S,\boldsymbol p) \bigl(\log |\lambda_{j^\star}^{(i)}|-\log |\lambda_{j^\star}^{(k)}|\bigr).
\]
\end{enumerate}
\end{proposition}

\begin{proof}
We divide the proof into each point:
\begin{enumerate}[label=\textup{(\roman*)}]
\item From~\eqref{def:rho_infty2},
\[
\log \rho_\infty(\mathcal{S}, \boldsymbol{p}) = \max_{1 \leq j \leq d} \sum_{i=1}^m p_i \log|\lambda_j^{(i)}|.
\]
Each map $\boldsymbol{p} \mapsto \sum_{i} p_i \log|\lambda_j^{(i)}|$ is linear, hence convex, on $\Delta_m$. Since the pointwise maximum of finitely many convex functions is convex, $\log \rho_\infty$ is convex on $\Delta_m$.

\item As the pointwise maximum of finitely many continuous functions, $\log \rho_\infty$ is continuous. Since $\exp$ is continuous, $\rho_\infty$ is continuous on $\Delta_m$.

\item For any word $\sigma \in [m]^n$, let $\nu_i(\sigma)$ denote the number of occurrences of index $i$, and set $p_i^{(n)} := \nu_i(\sigma)/n$. Then
\[
\rho_n(\sigma)=\max_{1\le j\le d}\prod_{i=1}^m |\lambda_j^{(i)}|^{p_i^{(n)}}=\rho_\infty(\mathcal S,\boldsymbol p^{(n)}).
\]
Thus $\rho_n(\sigma)$ is obtained by evaluating $\rho_\infty(\mathcal S,\cdot)$ at a rational point of the simplex. Conversely, rational points are dense in $\Delta_m$, and $\rho_\infty(\mathcal S,\cdot)$ is continuous by (ii). Therefore
\[
\hat\rho(\mathcal S)=\sup_{\boldsymbol p\in\Delta_m}\rho_\infty(\mathcal S,\boldsymbol p),\qquad \check\rho(\mathcal S)=\inf_{\boldsymbol p\in\Delta_m}\rho_\infty(\mathcal S,\boldsymbol p).
\]
Since $\Delta_m$ is compact, the inf and sup are attained. The formula for $\hat\rho(\mathcal S)$ follows because, for fixed $j$, the weighted geometric mean $\prod_i |\lambda_j^{(i)}|^{p_i}$ is maximized by placing all mass on an index realizing $\max_i |\lambda_j^{(i)}|$. The formula~\eqref{eq:lsr_lp} follows by taking logarithms.

\item When $J(\boldsymbol p)=\{j^\star\}$, the maximizing index is locally constant, so in a neighborhood of $\boldsymbol p$ one has
\[
\log \rho_\infty(\mathcal S,\boldsymbol p)=\sum_{i=1}^m p_i\log |\lambda_{j^\star}^{(i)}|.
\]
Differentiating along tangent directions $h$ with $\sum_i h_i=0$ gives~\eqref{eq:rho_infty_gradient}, and the corresponding formula for $D\rho_\infty$ follows by the chain rule.
\end{enumerate}
\end{proof}

\begin{remark}
The log-convexity of $\rho_\infty$ in~\cref{prop:rho_infty_p}(i) reflects the geometric structure of the problem.  The minimization formula in~\cref{prop:rho_infty_p}(iii) is often more useful than a coordinatewise expression.  For instance, for
\[
D_1=\operatorname{diag}(2,1),\qquad D_2=\operatorname{diag}(1,2),
\]
one has $\max_j\min_i |\lambda_j^{(i)}|=1$, whereas
\[
\check\rho(\mathcal S)=\min_{p\in[0,1]}\max(2^p,2^{1-p})=\sqrt 2.
\]
Thus the LSR is obtained by balancing the competing coordinates rather than by minimizing each coordinate separately.  The extremal bounds~\eqref{eq:JSR_LSR_bound} complement the tail bounds in~\cref{cor:tail_JSR_LSR}: the LDP quantifies the probability of observing $\rho_n$ near the JSR/LSR for a fixed $\boldsymbol p$, whereas~\eqref{eq:JSR_LSR_bound} shows how these extremes arise by optimizing over the sampling distribution.
\end{remark}

\section{Perturbation theory } \label{sec:perturbation_theory}

In this section, we study perturbations of matrix families and the effect of small perturbations on the random spectral radius. Let
\[
\cF_\eps := \bigl\{A_1(\eps), \ldots, A_m(\eps) \bigr\},
\]
be obtained by perturbing a family $\mathcal{F}=\{A_1,\dots,A_m\}$ with additive matrices $\Delta^{(i)}$, each normalized in Frobenius norm:
\[
A_i(\eps) := A_i + \eps \Delta^{(i)}, \qquad \bigl\| \Delta^{(i)} \bigr\|_F = 1.
\]
Throughout this section we assume that the unperturbed matrices $A_i$ have simple, nonzero eigenvalues, and that $\eps$ is sufficiently small so that the relevant eigenvalue branches remain well-defined.

For clarity, we carry out the perturbation expansion for a family of only two complex matrices; specifically, throughout \cref{subsec:pert_eig_expansion} we consider
\[
\cF_\eps := \bigl\{A_1(\eps), A_2(\eps)\bigr\}.
\]
Since the perturbed matrices $A_1(\eps)$ and $A_2(\eps)$ need not commute, their product expansions exhibit noncommutativity starting at order $\eps$. Specifically, we get
\[
[A_1(\eps), A_2(\eps)] = A_1 A_2 - A_2 A_1 + \eps \big( [A_1, \Delta^{(2)}] + [\Delta^{(1)}, A_2] \big) + \eps^2 [\Delta^{(1)}, \Delta^{(2)}].
\]
In the following, we focus first on triangular matrices, and then specialize further to diagonal ones. For diagonal matrices, the constant term of the commutator vanishes, while in the upper(lower)-triangular case, it has zero diagonal entries.

\subsection{Perturbation eigenvalue expansion}\label{subsec:pert_eig_expansion}

In this subsection, we summarize auxiliary results on eigenvalues of perturbed matrices that are already well-known in the literature.

\begin{lemma} \label{lem:RS-diagonal}
Let $A(\eps):=A+\eps\Delta$, where $A\in\mathbb C^{d\times d}$ has simple, nonzero eigenvalues $\lambda_1,\dots,\lambda_d$. Let $v_k$ and $u_k$ be, respectively, right and left eigenvectors of $A$, normalized so that
\[
u_j^\ast v_k = \delta_{jk}, \qquad j,k=1,\ldots,d.
\]
Then, as $\eps \to 0$, the eigenvalues of $A(\eps)$ admit the expansion
\[
\lambda_k(\eps) = \lambda_k +\eps\,u_k^*\Delta v_k +\eps^2\,u_k^*\Delta v_k^{(1)} +O(\eps^3), \qquad \text{where} \quad v_k^{(1)} = \sum_{j\ne k}\frac{u_j^*\Delta v_k}{\lambda_k-\lambda_j}\,v_j.
\]
\end{lemma}

\begin{proof}
This is the standard Rayleigh--Schr\"odinger expansion for simple eigenvalues in the non-Hermitian setting, see, for example,~\cite{bamieh2022tutorialmatrixperturbationtheory,kato1995perturbation,horn2012matrix}.
\end{proof}

\begin{remark}
In particular, if $A$ is a diagonal matrix, then the eigenvectors are the standard unit vectors $u_k=v_k=e_k$. The eigenvalue expansion simplifies to
\[
\lambda_k(\eps) = \lambda_k + \eps\,\Delta_{kk} + \eps^2 \sum_{j\neq k} \frac{\Delta_{kj}\Delta_{jk}}{\lambda_k - \lambda_j} + O(\eps^3), \qquad \eps \rightarrow 0.
\]
\end{remark}

\begin{remark}
If $A$ is normal, its left and right eigenvectors can be chosen to coincide into an orthonormal basis, and the first-order correction reduces to $\lambda_k^{(1)} = v_k^*\Delta v_k$; if $\Delta$ is Hermitian, this is real.
\end{remark}

\begin{corollary}\label{cor:log-modulus}
With the notation of~\cref{lem:RS-diagonal}, let $a_k:= u_k^* \Delta v_k$ and $b_k:= u_k^* \Delta v_k^{(1)}$. Then
\[
\log|\lambda_k(\eps)|=\log|\lambda_k(0)|+\eps \Re\left(\frac{a_k}{\lambda_k(0)}\right) +\eps^2 \Re \left(\frac{b_k}{\lambda_k(0)}-\frac{a_k^2}{2\lambda_k^2(0)}\right)+O(\eps^3).
\]
\end{corollary}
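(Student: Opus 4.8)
The plan is to obtain the expansion of $\log|\lambda_k(\eps)|$ by composing the eigenvalue expansion from~\cref{lem:RS-diagonal} with the real-analytic map $z \mapsto \log|z| = \tfrac12\log(z\bar z)$, treating $\lambda_k(\eps)$ as a perturbation of the nonzero base point $\lambda_k(0)$. Since $\lambda_k(0) \neq 0$, there is a neighborhood on which $\log|\cdot|$ is smooth, so I may expand in $\eps$ and collect terms up to order $\eps^2$. Writing $\lambda_k(\eps) = \lambda_k(0) + \eps\, a_k + \eps^2 b_k + O(\eps^3)$ with $a_k := u_k^t \Delta v_k$ and $b_k := u_k^t \Delta v_k^{(1)}$ as defined in the statement, I would factor out $\lambda_k(0)$ and write
\[
\lambda_k(\eps) = \lambda_k(0)\left(1 + \eps\,\frac{a_k}{\lambda_k(0)} + \eps^2\,\frac{b_k}{\lambda_k(0)} + O(\eps^3)\right).
\]

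Next I would apply $\log|\cdot| = \Re\log(\cdot)$ to this product, using $\log|\lambda_k(\eps)| = \log|\lambda_k(0)| + \Re\log\bigl(1 + w(\eps)\bigr)$ where $w(\eps) := \eps\,\frac{a_k}{\lambda_k(0)} + \eps^2\,\frac{b_k}{\lambda_k(0)} + O(\eps^3)$ is the small complex quantity inside the parentheses. The principal branch of $\log$ is valid here because $|w(\eps)| \to 0$ as $\eps \to 0$. I would then substitute the scalar Taylor expansion $\log(1+w) = w - \tfrac12 w^2 + O(w^3)$ and keep terms through $\eps^2$. The first-order term contributes $\eps\,\Re\bigl(a_k/\lambda_k(0)\bigr)$; the second-order term receives a contribution $\eps^2\,\Re\bigl(b_k/\lambda_k(0)\bigr)$ from the linear part of $\log(1+w)$ and a contribution $-\tfrac12\eps^2\,\Re\bigl(a_k^2/\lambda_k^2(0)\bigr)$ from the $-\tfrac12 w^2$ term, since the leading piece of $w^2$ is $\eps^2 a_k^2/\lambda_k^2(0)$. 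Summing these yields exactly the claimed coefficient $\Re\bigl(b_k/\lambda_k(0) - a_k^2/(2\lambda_k^2(0))\bigr)$ at order $\eps^2$.

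The only genuine care needed is bookkeeping: the $O(\eps^3)$ remainder in the eigenvalue expansion propagates through $\log(1+w)$ as an $O(\eps^3)$ error in $\log|\lambda_k(\eps)|$, which I would confirm by noting that $\log(1+\cdot)$ and $\Re(\cdot)$ are Lipschitz near their arguments, so the composition does not degrade the order of the remainder. I expect no substantive obstacle here, since the result is a direct consequence of~\cref{lem:RS-diagonal} together with the elementary expansion of $\log|\cdot|$; the statement is essentially a restatement of the perturbation expansion in logarithmic-modulus coordinates, which is the natural coordinate for the subsequent analysis of $\rho_\infty$ and $\sigma_\infty$ since these are built from $\sum_i p_i \log|\lambda^{(i)}_j|$ and its second moment. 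The main point to state cleanly is simply that taking real parts commutes with the expansion because $\eps$ is real, so each order in $\eps$ may be handled separately.
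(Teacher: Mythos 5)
Your proposal is correct and follows exactly the route the paper intends: the corollary is stated without proof as an immediate consequence of \cref{lem:RS-diagonal}, obtained by factoring out $\lambda_k(0)$, writing $\log|\cdot|=\Re\log(\cdot)$, and expanding $\log(1+w)=w-\tfrac12 w^2+O(w^3)$, which is precisely what you do. You also implicitly use the correct form $\lambda_k(\eps)=\lambda_k(0)+\eps\,a_k+\eps^2 b_k+O(\eps^3)$ of the eigenvalue expansion (the lemma as printed contains a duplicated first-order term at order $\eps^2$, evidently a typo), and your bookkeeping of the remainder through the composition is sound.
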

 
Fix $n \geq 1$ and consider $A_j(\eps) = A_j + \eps\Delta^{(j)}$ for $j=1,2$. We consider products containing exactly $k$ factors of $A_1(\eps)$ and $n-k$ factors of $A_2(\eps)$. Since these operators generally do not commute, we index products by the order of their factors and keep the global convention from~\cref{sec:expected_spectral_radius}:
\begin{equation} \label{def:ProducA_eps_sigma}
\Pi(\eps, \sigma) := A_{\sigma_n}(\eps)\cdots A_{\sigma_1}(\eps),
\qquad
\sigma=(\sigma_1,\ldots,\sigma_n)\in [2]^n,
\end{equation}
and require that $\sigma$ contain exactly $k$ entries equal to $1$ and $n-k$ entries equal to $2$.

\begin{definition}
Let $\sigma\in[2]^n$ be such that $\Pi(0,\sigma)$ has simple, nonzero eigenvalues. We denote by $\Lambda_i(\eps,\sigma)$ the corresponding local eigenvalue branches of $\Pi(\eps,\sigma)$, numbered so that
\[
\Lambda_i(0,\sigma)=\prod_{k=1}^n \lambda_i^{(\sigma_k)}.
\]
The same notation will be used below for words $\sigma\in[m]^n$ in the diagonal perturbation setting.
\end{definition}

At $\eps = 0$, the matrices in a product~\eqref{def:ProducA_eps_sigma} are triangular and therefore the dominant eigenvalue (that is, the eigenvalue of largest modulus) of a product does not depend on the multiplication order. Since $\lambda_i^{(h)} = (A_h)_{ii}\neq0$ for $h=1,2$, we have
\[
\Lambda_i(0,\sigma)=\left(\lambda_i^{(1)}\right)^k \left(\lambda_i^{(2)}\right)^{n-k}.
\]
Denote $\Pi_\sigma^{(0)}:=\Pi(0, \sigma)$. For $|\eps|$ small, we expand $\Pi(\eps, \sigma)$ as
\[
\Pi(\eps, \sigma) = \Pi_\sigma^{(0)} + \eps \, \Pi_\sigma^{(1)} + \eps^2 \, \Pi_\sigma^{(2)} + O(\eps^3),
\]
with
\begin{align*}
\Pi_\sigma^{(1)} & = \sum_{\ell=1}^n A_{\sigma_n}\cdots A_{\sigma_{\ell+1}} \Delta^{(\sigma_\ell)} A_{\sigma_{\ell-1}}\cdots A_{\sigma_1},
\\[1pt]
\Pi_\sigma^{(2)} &= \sum_{1\le p<q\le n} A_{\sigma_n}\cdots A_{\sigma_{q+1}} \Delta^{(\sigma_q)} A_{\sigma_{q-1}}\cdots A_{\sigma_{p+1}} \Delta^{(\sigma_p)} A_{\sigma_{p-1}}\cdots A_{\sigma_1}.
\end{align*}
For $1\le a\le b\le n$ and $\sigma \in [2]^n$, we introduce the compact notation
\[
\Lambda_i^{(a:b)}(0,\sigma) := \prod_{p=a}^b \lambda_i^{(\sigma_p)},
\qquad
\Lambda_i^{(a:b)}(0,\sigma):=1 \ \text{if } a>b,
\]
so that in particular $\Lambda_i^{(1:n)}(0,\sigma)=\Lambda_i(0,\sigma)$.

\begin{remark}
If every factor $A_{\sigma_\ell}(\eps)$ is invertible (e.g., for $|\eps|$ small enough), then for the one-step cyclic shift
\[
\tau(\sigma_1,\sigma_2,\ldots,\sigma_n):=(\sigma_2,\ldots,\sigma_n,\sigma_1)
\]
the products $\Pi(\eps,\tau\sigma)$ and $\Pi(\eps,\sigma)$ are similar:
\[
\Pi(\eps,\tau\sigma)=A_{\sigma_1}(\eps)\,\Pi(\eps,\sigma)\,A_{\sigma_1}(\eps)^{-1}.
\]
Hence $\Lambda_i(\eps,\tau\sigma)=\Lambda_i(\eps,\sigma)$ for all $\eps$. More generally, a cyclic shift by $r$ positions is a conjugation by the partial product $A_{\sigma_r}(\eps)\cdots A_{\sigma_1}(\eps)$.
\end{remark}

\begin{lemma}\label{lemma:eigenvalue_products}
Fix $n\ge 1$ and $\sigma\in[2]^n$, and assume that $\Pi_\sigma^{(0)}=A_{\sigma_n}\cdots A_{\sigma_1}$ has simple, nonzero eigenvalues $\Lambda_1(0,\sigma),\dots,\Lambda_d(0,\sigma)$. Let $v_i(\sigma)$ and $u_i(\sigma)$ be, respectively, right and left eigenvectors of $\Pi_\sigma^{(0)}$ associated with $\Lambda_i(0,\sigma)$, normalized. Then
\begin{align}\label{eq:lambda_pert_product}
\Lambda_i(\eps,\sigma) = \Lambda_i(0,\sigma) &+ \eps\, u_i^*\Pi_\sigma^{(1)} v_i \nonumber\\
&+ \eps^2 \left[ u_i^*\Pi_\sigma^{(2)} v_i + \sum_{j\ne i}\frac{\bigl(u_i^*\Pi_\sigma^{(1)}v_j\bigr)\bigl(u_j^*\Pi_\sigma^{(1)}v_i\bigr)}{\Lambda_i(0,\sigma)-\Lambda_j(0,\sigma)} \right] + O(\eps^3),
\end{align}
where the $O(\eps^3)$ remainder is bounded uniformly in $\sigma$ for fixed $n$.
\end{lemma}

\begin{remark}
The eigenvectors $v_i(\sigma), u_i(\sigma)$ of $\Pi_\sigma^{(0)}$ depend on $\sigma$ and, in general, do not coincide with the standard basis vectors, even though each individual factor $A_{\sigma_\ell}$ has $e_1$ as the right eigenvector for its dominant eigenvalue (upper triangular case). Consequently, the quadratic forms $u_i^*\Pi_\sigma^{(k)}v_i$ in~\eqref{eq:lambda_pert_product} do not reduce to matrix entries $(\Pi_\sigma^{(k)})_{ii}$ unless an additional structural assumption, such as diagonality of the $A_{\sigma_\ell}$, is imposed.
\end{remark}

We now specialize the expansion result above (\cref{lemma:eigenvalue_products}) to the diagonal case, where the formulas collapse to explicit matrix entries.

\begin{lemma}\label{lemma:eigenvalue_products_diag}
Assume that in~\cref{lemma:eigenvalue_products} the matrices $A_{\sigma_\ell}$ are diagonal, so that $\Pi_\sigma^{(0)}$ is diagonal with simple, nonzero entries. Then $u_i(\sigma)=v_i(\sigma)=e_i$ for all $i$ and $\sigma$, and
\[
\Lambda_i(\eps,\sigma) = \Lambda_i(0,\sigma) + \eps\,(\Pi_\sigma^{(1)})_{ii} + \eps^2\left[(\Pi_\sigma^{(2)})_{ii} + \sum_{j\ne i}\frac{(\Pi_\sigma^{(1)})_{ji}(\Pi_\sigma^{(1)})_{ij}}{\Lambda_i(0,\sigma)-\Lambda_j(0,\sigma)}\right] + O(\eps^3),
\]
with the explicit expressions
\begin{align*}
(\Pi_\sigma^{(1)})_{ii} &= \Lambda_i(0,\sigma)\sum_{\ell=1}^n \frac{\Delta^{(\sigma_\ell)}_{ii}}{\lambda_i^{(\sigma_\ell)}}, \\
(\Pi_\sigma^{(1)})_{ij} &= \sum_{\ell=1}^n \Delta^{(\sigma_\ell)}_{ij}\,\Lambda_i^{(\ell+1:n)}(0,\sigma)\,\Lambda_j^{(1:\ell-1)}(0,\sigma), \qquad (i\ne j), \\
(\Pi_\sigma^{(2)})_{ii} &= \sum_{1\le p<q\le n}\sum_{r=1}^d \Delta^{(\sigma_q)}_{ir}\Delta^{(\sigma_p)}_{ri}\,\Lambda_i^{(q+1:n)}(0,\sigma)\,\Lambda_r^{(p+1:q-1)}(0,\sigma)\,\Lambda_i^{(1:p-1)}(0,\sigma).
\end{align*}
\end{lemma}

If $n\ge1$, $k\in\{0,\ldots,n\}$ and $\sigma$ has exactly $k$ entries equal to $1$ and $n-k$ ones equal to $2$, then both the leading term 
\[
\Lambda_i(0,\sigma)=\left(\lambda_i^{(1)}\right)^k\left(\lambda_i^{(2)}\right)^{n-k}
\]
and the first-order perturbation 
\[
\Lambda_i^{(1)}(0,\sigma)=(\Pi_\sigma^{(1)})_{ii} =\Lambda_i(0,\sigma)\left( k\,\frac{\Delta^{(1)}_{ii}}{\lambda_i^{(1)}}+(n-k)\,\frac{\Delta^{(2)}_{ii}}{\lambda_i^{(2)}} \right)
\]
\emph{do not depend on the multiplication order in the product but depend only on the occurrence numbers}. The second-order coefficient $\Lambda_i^{(2)}(0,\sigma)$ generally \textbf{depends} on the arrangement of factors. It becomes independent if either
\begin{itemize}
\item[\textnormal{(a)}] $\Delta^{(1)}$ and $\Delta^{(2)}$ are diagonal (all factors commute);
\item[\textnormal{(b)}] $D_i=\alpha_i I$ for $i\in\{1,2\}$, and $[\Delta^{(1)},\Delta^{(2)}]=0$.
\end{itemize}

\begin{example}
Let $d=2$ and consider the following matrices:
\[
D_1=\begin{pmatrix}1&0\\0&2\end{pmatrix},\qquad
D_2=\begin{pmatrix}3&0\\0&5\end{pmatrix},\qquad
\Delta^{(1)}=\Delta^{(2)}=\begin{pmatrix}0&1\\1&0\end{pmatrix}.
\]
With the global convention $\Pi(\eps,\sigma)=A_{\sigma_n}(\eps)\cdots A_{\sigma_1}(\eps)$, we have
\[
\Pi(\eps,(1,1,2,2))=A_2(\eps)^2A_1(\eps)^2,
\qquad
\Pi(\eps,(1,2,1,2))=A_2(\eps)A_1(\eps)A_2(\eps)A_1(\eps).
\]
For the eigenvalue associated with $e_1$, $\Lambda_1(0,\sigma)=9$ and $\Lambda_1^{(1)}(0,\sigma)=0$ for both permutations. However, the second-order terms are different:
\[
\Lambda_1^{(2)} \bigl(0,(1,1,2,2) \bigr) = -\tfrac{1803}{91} \neq -\tfrac{138}{7} = \Lambda_1^{(2)}\bigl(0,(1,2,1,2)\bigr),
\]
so the ordering is crucial even though the perturbations are symmetric and off-diagonal.
\end{example}

\begin{corollary}\label{cor:log-eig-product}
Under the hypotheses of~\cref{lemma:eigenvalue_products_diag}, for any $i$ and any word $\sigma$,
\begin{align*}
& \log |\Lambda_i(\eps,\sigma)|  = \log |\Lambda_i(0,\sigma)| +\eps\,\Re \left(\frac{(\Pi_\sigma^{(1)})_{ii}}{\Lambda_i(0,\sigma)}\right)
\\ &\qquad  +\eps^2\,\Re \left( \frac{1}{\Lambda_i(0,\sigma)} \left[ (\Pi_\sigma^{(2)})_{ii} + \sum_{j\ne i} \frac{(\Pi_\sigma^{(1)})_{ji}(\Pi_\sigma^{(1)})_{ij}} {\Lambda_i(0,\sigma)-\Lambda_j(0,\sigma)} \right] -\frac12\left(\frac{(\Pi_\sigma^{(1)})_{ii}}{\Lambda_i(0,\sigma)}\right)^2 \right) +O(\eps^3).
\end{align*}
\end{corollary}

\subsection{Perturbation theory for diagonal matrices}

In the proofs of the LLN and CLT for triangular and commuting families of matrices, we used independent random variables associated with the matrices. This approach depended on two properties: the preservation of matrix structure under multiplication and the ability to determine the spectral radius of a product directly from the original data. However, perturbations typically disrupt these structures, causing eigenvalue perturbations in matrix products that depend on the multiplication order. Therefore, in the following subsections we focus on diagonal matrix families. Our first goal is to derive a rigorous first-order logarithmic approximation for the perturbed random spectral radius. To recover an exact fixed-$\eps$ law of large numbers and central limit theorem, we then impose the stronger assumption that the perturbed family remains simultaneously triangularizable. Let
\[
\mathcal F_\eps = \bigl\{A_i(\eps):=D_i+\eps\,\Delta^{(i)} \bigr\}_{i=1}^m, \qquad \|\Delta^{(i)}\|_F=1,
\]
where $D_i=\operatorname{diag}(\lambda^{(i)}_1,\dots,\lambda^{(i)}_d)$, for
$i=1,\dots,m$. In this subsection we assume that the diagonal family satisfies the aligned dominance assumption from~\cref{subsec:ordered_diagonal}, namely that, after relabeling if necessary,
\[
|\lambda_1^{(i)}|>\max_{r\ne1}|\lambda_r^{(i)}|,\qquad i=1,\dots,m.
\]
Let the perturbed RSR be the random variable on the word space $[m]^n$ defined by
\[
\rho_n(\eps,\sigma) := \left(\max_{1\le i\le d}|\Lambda_i(\eps,\sigma)|\right)^{1/n}, \qquad \sigma=(i_1,\dots,i_n)\in[m]^n,
\]
and equip $[m]^n$ with the usual product probability measure $\mathbb P(\sigma)=\prod_{j=1}^n p_{i_j}$.

\subsubsection{Logarithmic approximation}

The perturbative expressions for the eigenvalues~\eqref{eq:lambda_pert_product} show that the branches of matrix products vary continuously with $\eps$. In the diagonal case,~\eqref{def:aligned dom convergence} gives a canonical branch issuing from the dominant unperturbed eigenvalue. The next lemma provides a uniform logarithmic approximation for that branch.

\begin{lemma}\label{lem:pert_remainder}
Let $\mathcal{F}_\eps = \{D_i+\eps\Delta^{(i)}\}_{i=1}^m$ where the diagonal matrices $D_i=\operatorname{diag}(\lambda_1^{(i)},\ldots,\lambda_d^{(i)})$ satisfy the aligned dominance assumption. Set
\begin{equation}\label{def:eta}
\eta := \max_{\substack{1<r\leq d \\ 1\leq i\leq m}} \frac{|\lambda_r^{(i)}|}{|\lambda_1^{(i)}|} < 1, \qquad \lambda_{\min} := \min_i|\lambda_1^{(i)}|, \qquad \Delta_{\max} := \max_i\|\Delta^{(i)}\|_F,
\end{equation}
and $\delta := \Delta_{\max}/\lambda_{\min}$. Then there exists $\eps_0>0$, depending only on $\eta$, $\lambda_{\min}$, and $\Delta_{\max}$, such that for every $|\eps|\leq \eps_0$, every $n\geq 1$, and every $\sigma\in[m]^n$, the eigenvalue branch $\Lambda_1(\eps,\sigma)$ issued from $\Lambda_1(0,\sigma)$ is simple, nonzero, and strictly dominant in modulus. Moreover,
\[
\frac{1}{n}\log|\Lambda_1(\eps,\sigma)| = \frac{1}{n}\sum_{k=1}^n \left(\log|\lambda_1^{(\sigma_k)}| + \eps\,\Re\left(\frac{\Delta^{(\sigma_k)}_{11}}{\lambda_1^{(\sigma_k)}}\right)\right) + R_n(\eps,\sigma),
\]
with the following estimate uniform in $n$ and $\sigma$:
\begin{equation}\label{eq:remainder_bound}
|R_n(\eps,\sigma)|\leq C_{\mathcal{F}}\,\eps^2, \qquad C_{\mathcal{F}} := \frac{6\,\Delta_{\max}^2}{\lambda_{\min}^2(1-\eta)}.
\end{equation}
\end{lemma}

\begin{proof}
Set
\[
B_i(\eps):=\frac{A_i(\eps)}{\lambda_1^{(i)}}=
\operatorname{diag}(1,\nu_2^{(i)},\dots,\nu_d^{(i)})+\eps M_i,
\qquad
\nu_r^{(i)}:=\frac{\lambda_r^{(i)}}{\lambda_1^{(i)}},
\]
so that $|\nu_r^{(i)}|\le \eta$ for $r>1$ and $\|M_i\|_2\le \delta$. Let $
\widetilde\Pi(\eps,\sigma):=B_{\sigma_n}(\eps)\cdots B_{\sigma_1}(\eps)$. Then
\[
\Lambda_1(\eps,\sigma)=\Lambda_1(0,\sigma)\,\widetilde\Lambda_1(\eps,\sigma),
\]
where $\widetilde\Lambda_1(\eps,\sigma)$ denotes the branch of
$\widetilde\Pi(\eps,\sigma)$ issuing from $1$ at $\eps=0$. Write $B_i(\eps)$ in block form relative to $\mathbb C e_1\oplus e_1^\perp$, that is
\[
B_i(\eps)=
\begin{pmatrix}
a_i & b_i^\ast\\
c_i & D_i^\perp
\end{pmatrix},
\]
with
\[
|a_i-1|\le |\eps|\delta,\qquad
\|b_i\|_2,\|c_i\|_2\le |\eps|\delta,\qquad
\|D_i^\perp\|_2\le \eta+|\eps|\delta.
\]
For $w\in e_1^\perp$, define $\Phi_i(w)$ by
\[
B_i(\eps)(e_1+w)=\bigl(a_i+b_i^\ast w\bigr)\,\bigl(e_1+\Phi_i(w)\bigr),
\qquad
\Phi_i(w):=\frac{c_i+D_i^\perp w}{a_i+b_i^\ast w}.
\]
Fix
\[
\beta:=\frac{2\delta|\eps|}{1-\eta},
\qquad
\mathbb B_\beta:=\{w\in e_1^\perp:\ \|w\|_2\le \beta\}.
\]
After choosing $\eps_0>0$ small enough, we may assume that for all
$|\eps|\le \eps_0$ one has $\beta\le \sqrt2-1$ and
$|\eps|\delta(1+\beta)\le 1/2$. Hence, for every $w\in\mathbb B_\beta$,
\[
|a_i+b_i^\ast w|
\ge 1-|\eps|\delta(1+\beta)
>0,
\]
and
\[
\|\Phi_i(w)\|_2
\le
\frac{(\eta+|\eps|\delta)\beta+|\eps|\delta}{1-|\eps|\delta(1+\beta)}
\le \beta.
\]
Thus each $\Phi_i$ maps $\mathbb B_\beta$ into itself. A direct computation gives,
for $w,z\in\mathbb B_\beta$,
\[
\|\Phi_i(w)-\Phi_i(z)\|_2
\le q(\eps)\,\|w-z\|_2,
\]
where
\[
q(\eps):=
\frac{(1+|\eps|\delta)(\eta+|\eps|\delta)+|\eps|^2\delta^2
+2\beta|\eps|\delta(\eta+|\eps|\delta)}
{\bigl(1-|\eps|\delta(1+\beta)\bigr)^2}.
\]
Since $q(\eps)\to \eta<1$ as $\eps\to 0$, after shrinking $\eps_0$ if necessary we may assume
\[
q_*:=\sup_{|\eps|\le \eps_0}q(\eps)<1.
\]
For a word $\sigma\in[m]^n$, define the projective map
\[
\Phi_\sigma:=\Phi_{\sigma_n}\circ\cdots\circ\Phi_{\sigma_1}.
\]
Then $\Phi_\sigma$ is a contraction on $\mathbb B_\beta$ with Lipschitz constant at most
$q_*^n$. By Banach's fixed-point theorem, $\Phi_\sigma$ has a unique fixed point
$w^\ast\in\mathbb B_\beta$. Setting $v^\ast:=e_1+w^\ast$, we obtain
\[
\widetilde\Pi(\eps,\sigma)v^\ast=\widetilde\Lambda_1(\eps,\sigma)\,v^\ast
\]
for some eigenvalue $\widetilde\Lambda_1(\eps,\sigma)$ of
$\widetilde\Pi(\eps,\sigma)$. To identify the derivative, let $M:=\widetilde\Pi(\eps,\sigma)$ and $\Lambda:=\widetilde\Lambda_1(\eps,\sigma)$. Differentiating the affine-chart identity
\[
M(e_1+w)=\alpha(w)\bigl(e_1+\Phi_\sigma(w)\bigr)
\]
at $w=w^\ast$ in the direction $\xi\in e_1^\perp$ gives
\[
M\binom{0}{\xi}
=
\alpha'_{w^\ast}[\xi]\,v^\ast
+
\Lambda\binom{0}{D\Phi_\sigma(w^\ast)\xi}.
\]
Now let $u=(u_1,u_\perp)$ be an eigenvector of $M$ with eigenvalue
$\nu\neq \Lambda$, and set
\[
\xi:=u_\perp-u_1w^\ast\in e_1^\perp.
\]
Note that $\xi\neq 0$, since otherwise $u=u_1v^\ast$ would force
$\nu=\Lambda$. Since $u=u_1v^\ast+\binom{0}{\xi}$, using $Mv^\ast=\Lambda v^\ast$ and $Mu=\nu u$ we obtain
\[
M\binom{0}{\xi}
=
M(u-u_1v^\ast)
=
\nu\binom{0}{\xi}+(\nu-\Lambda)u_1v^\ast.
\]
Comparing the $e_1^\perp$-components yields
\[
D\Phi_\sigma(w^\ast)\xi=\frac{\nu}{\Lambda}\,\xi.
\]
Thus $D\Phi_\sigma(w^\ast)$ is the linear map induced by
$\Lambda^{-1}M$ on the quotient $\mathbb C^d/\mathbb C v^\ast$.
In particular, every eigenvalue $\nu\neq \Lambda$ of $M$ contributes the
quotient eigenvalue $\nu/\Lambda$, and if $\Lambda$ had algebraic multiplicity
greater than one then $1$ would also belong to
$\operatorname{spec}(D\Phi_\sigma(w^\ast))$. Since
\[
\|D\Phi_\sigma(w^\ast)\|\le q_*^n<1,
\]
all eigenvalues of $D\Phi_\sigma(w^\ast)$ have modulus $<1$. Hence
$|\nu|<|\Lambda|$ for every eigenvalue $\nu\neq \Lambda$, and
$1\notin \sigma(D\Phi_\sigma(w^\ast))$. This rules out algebraic
multiplicity greater than one for $\Lambda$. Therefore
$\widetilde\Lambda_1(\eps,\sigma)$ is algebraically simple and strictly dominant
in modulus. 

By continuity in $\eps$ and the identity
$\widetilde\Lambda_1(0,\sigma)=1$, it is exactly the branch issuing from $1$. Set $v^{(0)}:=v^\ast$, and for $k=1,\dots,n$ define recursively
\[
B_{\sigma_k}(\eps)v^{(k-1)}=\alpha_k\,v^{(k)},
\qquad
v^{(k)}=e_1+w^{(k)},
\qquad
w^{(k)}\in\mathbb B_\beta.
\]
Since $w^\ast$ is a fixed point of $\Phi_\sigma$, one has
$v^{(n)}=v^{(0)}=v^\ast$. Iterating the previous identities,
\[
\widetilde\Pi(\eps,\sigma)v^\ast
=
B_{\sigma_n}(\eps)\cdots B_{\sigma_1}(\eps)v^\ast
=
\left(\prod_{k=1}^n \alpha_k\right)v^\ast,
\]
hence
\[
\widetilde\Lambda_1(\eps,\sigma)=\prod_{k=1}^n \alpha_k.
\]
Moreover, each factor is nonzero because
\[
|\alpha_k|
=
|a_{\sigma_k}+b_{\sigma_k}^\ast w^{(k-1)}|
\ge 1-|\eps|\delta(1+\beta)>0.
\]
Therefore $\widetilde\Lambda_1(\eps,\sigma)\neq 0$, and hence
$\Lambda_1(\eps,\sigma)=\Lambda_1(0,\sigma)\widetilde\Lambda_1(\eps,\sigma)$ is
simple, nonzero, and strictly dominant. Writing $v^{(k-1)}=e_1+w^{(k-1)}$ with $\|w^{(k-1)}\|_2\le \beta$, we obtain
\[
\alpha_k
=
1+\eps\,\frac{\Delta^{(\sigma_k)}_{11}}{\lambda_1^{(\sigma_k)}}
+\eps\,\zeta_k,
\qquad
|\zeta_k|
\le
\delta\,\|w^{(k-1)}\|_2
\le
\delta\beta
=
\frac{2\delta^2|\eps|}{1-\eta}.
\]
Set $u_k:=\alpha_k-1$. Since $\beta\le 1$ for $|\eps|\le \eps_0$, after one final
shrink of $\eps_0$ we may also assume $2|\eps|\delta\le 1/2$, and therefore
\[
|u_k|
\le
|\eps|\delta(1+\beta)
\le 2|\eps|\delta
\le \tfrac12.
\]
For $|u|\le \frac12$, the Taylor remainder estimate
\[
\bigl|\log|1+u|-\Re u\bigr|\le |u|^2
\]
yields
\[
\left|
\log|\alpha_k|
-\eps\,\Re\!\left(\frac{\Delta^{(\sigma_k)}_{11}}{\lambda_1^{(\sigma_k)}}\right)
\right|
\le
|\eps|\,|\zeta_k|+|u_k|^2
\le
\frac{2\delta^2\eps^2}{1-\eta}+4\delta^2\eps^2
\le
\frac{6\delta^2\eps^2}{1-\eta}.
\]
Summing over $k$ and dividing by $n$, together with
\[
\log|\widetilde\Lambda_1(\eps,\sigma)|
=
\log|\Lambda_1(\eps,\sigma)|
-\sum_{k=1}^n \log|\lambda_1^{(\sigma_k)}|,
\]
gives
\[
\frac{1}{n}\log|\Lambda_1(\eps,\sigma)|
=
\frac{1}{n}\sum_{k=1}^n
\left(
\log|\lambda_1^{(\sigma_k)}|
+\eps\,\Re\!\left(\frac{\Delta^{(\sigma_k)}_{11}}{\lambda_1^{(\sigma_k)}}\right)
\right)
+R_n(\eps,\sigma),
\]
with
\[
|R_n(\eps,\sigma)|
\le
\frac{6\delta^2}{1-\eta}\,\eps^2
=
\frac{6\,\Delta_{\max}^2}{\lambda_{\min}^2(1-\eta)}\,\eps^2.
\]
This is exactly~\eqref{eq:remainder_bound}.
\end{proof}

\begin{remark}
\cref{lem:pert_remainder} provides a uniform first-order logarithmic approximation for the dominant eigenvalue branch of perturbed products. It does not by itself imply an exact fixed-$\eps$ LLN or CLT, since the bounded remainder $R_n(\eps,\sigma)$ need not converge as $n\to\infty$.
\end{remark}

\begin{theorem}\label{thm:pert_log_approx}
Under the hypotheses of~\cref{lem:pert_remainder}, define
\[
\alpha_i:=\Re\!\left(\frac{\Delta^{(i)}_{11}}{\lambda_1^{(i)}}\right),
\qquad
\widetilde\mu_\eps:=\sum_{i=1}^m p_i\bigl(\log|\lambda_1^{(i)}|+\eps\,\alpha_i\bigr),
\qquad
\widetilde\rho_\infty(\eps):=\exp(\widetilde\mu_\eps).
\]
Then, for every $|\eps|\le \eps_0$, every $n\ge1$, and every $\sigma\in[m]^n$,
\begin{equation}\label{eq:pert_log_main}
\log \rho_n(\eps,\sigma)
=
\frac1n\sum_{k=1}^n\bigl(\log|\lambda_1^{(\sigma_k)}|+\eps\,\alpha_{\sigma_k}\bigr)
+R_n(\eps,\sigma),
\end{equation}
where $R_n(\eps,\sigma)$ satisfies~\eqref{eq:remainder_bound}. Now let $\omega=(\omega_k)_{k\ge1}\in[m]^{\mathbb N}$ be sampled with the product measure and write $\omega|_n:=(\omega_1,\dots,\omega_n)$. Then, almost surely,
\[
\limsup_{n\to\infty}
\left|
\log \rho_n(\eps,\omega|_n)-\widetilde\mu_\eps
\right|
\le C_{\mathcal F}\eps^2.
\]
Equivalently, almost surely,
\[
\mathrm e^{-C_{\mathcal F}\eps^2}\widetilde\rho_\infty(\eps) \le \liminf_{n\to\infty}\rho_n(\eps,\omega|_n) \le \limsup_{n\to\infty}\rho_n(\eps,\omega|_n) \le \mathrm e^{C_{\mathcal F}\eps^2}\widetilde\rho_\infty(\eps).
\]
\end{theorem}

\begin{proof}
By~\cref{lem:pert_remainder}, the branch $\Lambda_1(\eps,\sigma)$ is strictly dominant, so
\[
\rho_n(\eps,\sigma)=|\Lambda_1(\eps,\sigma)|^{1/n}
\implies
\log\rho_n(\eps,\sigma)=\frac1n\log|\Lambda_1(\eps,\sigma)|.
\]
Applying the logarithmic expansion from~\cref{lem:pert_remainder} yields~\eqref{eq:pert_log_main}. Now let $\omega=(\omega_k)_{k\ge1}\in[m]^{\mathbb N}$ and apply~\eqref{eq:pert_log_main} to the prefix $\sigma=\omega|_n$. The first term on the right-hand side is the empirical mean of i.i.d.\ random variables with expectation $\widetilde\mu_\eps$, so the strong law of large numbers gives
\[
\frac1n\sum_{k=1}^n
\bigl(\log|\lambda_1^{(\omega_k)}|+\eps\,\alpha_{\omega_k}\bigr)
\to
\widetilde\mu_\eps
\qquad \text{a.s.}
\]
Combining this with the uniform remainder bound~\eqref{eq:remainder_bound} yields the logarithmic enclosure. Taking exponentials gives the multiplicative bounds for $\rho_n(\eps,\omega|_n)$.
\end{proof}

\subsubsection{Numerical validation of the remainder bound}

We illustrate \cref{lem:pert_remainder} on the same diagonal family used in the large-deviation experiment, namely
\[
D_1=\operatorname{diag}(3.5,1.2,0.8),\quad
D_2=\operatorname{diag}(2.0,1.5,0.6),\quad
D_3=\operatorname{diag}(4.0,0.9,1.1),
\]
with probabilities $\boldsymbol p=(0.3,0.5,0.2)$. We draw one fixed Gaussian
realization of the perturbation matrices $\Delta^{(i)}$ and normalize each matrix so that $\|\Delta^{(i)}\|_F=1$, hence
$\Delta_{\max}=1$. For this family,
\[
\eta=0.75,\qquad
\lambda_{\min}=2.0,\qquad
\delta=\frac{\Delta_{\max}}{\lambda_{\min}}=0.5,\qquad
C_{\mathcal F}=\frac{6\,\Delta_{\max}^2}{\lambda_{\min}^2(1-\eta)}=6.
\]
Moreover, the inequalities used in the proof of~\cref{lem:pert_remainder} are satisfied with $|\eps|\le 0.09$. Indeed
$\beta=4|\eps|\le 0.36<\sqrt2-1$,
$|\eps|\delta(1+\beta)\le 0.0612<\tfrac12$, and the contraction factor
$q(\eps)$ appearing in the proof satisfies $q(0.09)\approx 0.974<1$.
Accordingly, we restrict the numerical validation to the theorem-covered regime
$\eps\in[0.002,0.09]$.

For each pair $(n,\eps)$ with $n\in\{10,50,200,500\}$, we generate
$3\,000$ i.i.d.\ words $\sigma\in[m]^n$ and evaluate
\[
R_n(\eps,\sigma)
:=
\log \rho_n(\eps,\sigma)
-
\frac1n\sum_{k=1}^n
\Bigl(\log|\lambda_1^{(\sigma_k)}|+\eps\,\alpha_{\sigma_k}\Bigr),
\qquad
\alpha_i=\Re\!\left(\frac{\Delta^{(i)}_{11}}{\lambda_1^{(i)}}\right).
\]
The exact value of $\log \rho_n(\eps,\sigma)$ is computed from the spectral
radius of the perturbed product using a stabilized product accumulation, which
avoids floating-point overflow for the largest values of $n$.
\cref{fig:remainder-bound} displays the resulting sample maxima.

The left panel shows that the empirical maxima
$\max_{1\le s\le 3000}|R_n(\eps,\sigma^{(s)})|$ are parallel to the theoretical
envelope $C_{\mathcal F}\eps^2$ on a log--log scale and remain well below it
for all tested values of $n$. The right panel plots the normalized sample
maxima $\max_{1\le s\le 3000}|R_n(\eps,\sigma^{(s)})|/\eps^2$; these curves are
nearly flat in~$\eps$ and show no growth with~$n$, which is consistent with the
uniform $O(\eps^2)$ estimate in \cref{lem:pert_remainder}. As expected, the
explicit constant $C_{\mathcal F}=6$ is conservative relative to the observations.

\begin{figure}[ht!]
  \centering
  \includegraphics[width=\textwidth]{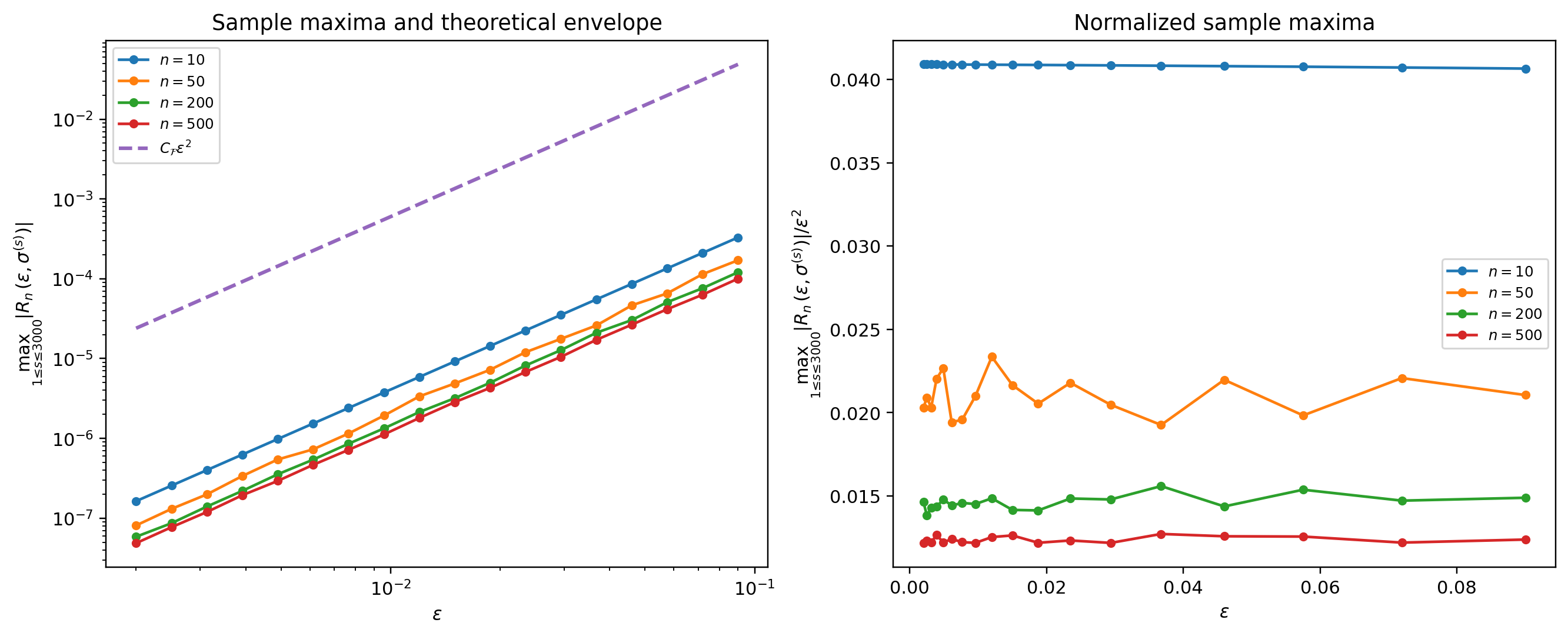}
  \caption{\textbf{Left:} sample maxima
  $\max_{1\le s\le 3000}|R_n(\eps,\sigma^{(s)})|$ for
  $n\in\{10,50,200,500\}$, together with the theoretical reference
  $C_{\mathcal F}\eps^2$ with $C_{\mathcal F}=6$. \textbf{Right:} normalized
  sample maxima $\max_{1\le s\le 3000}|R_n(\eps,\sigma^{(s)})|/\eps^2$. The
  nearly flat dependence on~$\eps$ supports the $O(\eps^2)$ scaling.}
  \label{fig:remainder-bound}
\end{figure}

\subsection{Perturbed LLN and CLT}

The first-order approximation above is useful for diagonal perturbations, but it does not by itself provide an exact fixed-$\eps$ LLN/CLT. To recover such results, we now impose a stronger structural hypothesis ensuring that the perturbed family remains within the triangular framework of \cref{sec:expected_spectral_radius}.

\begin{assumption}[Simultaneous triangularizability]\label{ass:simtri}
There exists $\eps_0>0$ and a family of invertible matrices $S(\eps)\in\mathbb C^{d\times d}$, defined for $|\eps|\le \eps_0$, such that the matrices
\[
T_i(\eps):=S(\eps)^{-1}A_i(\eps)S(\eps),
\]
are upper triangular for all $i=1,\ldots,m$. We denote their diagonal entries by
\[
\lambda_j^{(i)}(\eps):=\bigl(T_i(\eps)\bigr)_{jj},\qquad j=1,\dots,d,
\]
ordered so that $\lambda_j^{(i)}(0)=\lambda_j^{(i)}$. We also assume that $\lambda_j^{(i)}(\eps)\neq0$ for all $i,j$ and all $|\eps|\le \eps_0$. Since the diagonal entries of an upper-triangular matrix are its eigenvalues and the eigenvalues of $A_i(0)$ are simple, these functions $\eps\mapsto \lambda_j^{(i)}(\eps)$ agree, after the chosen relabeling, with the analytic eigenvalue branches of $A_i(\eps)$ issuing from $\lambda_j^{(i)}$.
\end{assumption}

\begin{remark}
Since spectral radius is invariant under similarity transformations, for every word
$\sigma=(i_1,\dots,i_n)\in[m]^n$ one has
\[
\rho \bigl(A_{i_n}(\eps)\cdots A_{i_1}(\eps)\bigr) = \rho\bigl(T_{i_n}(\eps)\cdots T_{i_1}(\eps)\bigr).
\]
Therefore, the random spectral radius associated with the perturbed family $\mathcal F_\eps=\{A_i(\eps)\}_{i=1}^m$ coincides with that of the triangular family $\mathcal T_\eps=\{T_i(\eps)\}_{i=1}^m$.
\end{remark}

For the perturbed triangular family, define
\[
\mu_j(\eps):=\sum_{i=1}^m p_i\log |\lambda_j^{(i)}(\eps)|, \qquad \rho_\infty^{(j)}(\eps):=\exp(\mu_j(\eps)) = \prod_{i=1}^m |\lambda_j^{(i)}(\eps)|^{p_i},
\]
and $\rho_\infty(\eps):=\max_{1\le j\le d}\rho_\infty^{(j)}(\eps)$. In addition, let
\[
J(\eps):=\arg\max_{1\le j\le d}\rho_\infty^{(j)}(\eps).
\]
Moreover, for $l,h\in J(\eps)$ define the covariance matrix
\[
\Sigma_\eps(l,h)
:=
\sum_{i=1}^m p_i \log |\lambda_l^{(i)}(\eps)|\,\log |\lambda_h^{(i)}(\eps)|
-
\mu_l(\eps)\mu_h(\eps).
\]

\begin{theorem}\label{thm:pert_strong}
Assume~\cref{ass:simtri}. Then, for every fixed $|\eps|\le \eps_0$, there holds
\[
\rho_n(\mathcal F_\eps,\mathbb P_{\mathcal F_\eps}) \xrightarrow{\ \mathrm{a.s.}\ } \rho_\infty(\eps) \qquad\text{as }n\to\infty.
\]
More precisely,
\[
\rho_\infty(\eps)=\max_{1\le j\le d}\prod_{i=1}^m |\lambda_j^{(i)}(\eps)|^{p_i}.
\]
If $J(\eps)=\{j^\star(\eps)\}$, define
\[
\sigma_\infty(\eps) = \rho_\infty(\eps) \left( \sum_{i=1}^m p_i\bigl(\log |\lambda_{j^\star(\eps)}^{(i)}(\eps)|\bigr)^2 - \mu_{j^\star(\eps)}(\eps)^2 \right)^{1/2}.
\]
If in addition $\sigma_\infty(\eps)>0$, then
\[
\frac{
\rho_n(\mathcal F_\eps,\mathbb P_{\mathcal F_\eps})-\rho_\infty(\eps)}{n^{-1/2}\sigma_\infty(\eps)} \ \xRightarrow{\ d\ }\ \mathcal N(0,1).
\]
If $J(\eps)=\{j^\star(\eps)\}$ and $\sigma_\infty(\eps)=0$, then the fluctuation is degenerate:
\[
\sqrt n\Bigl(\rho_n(\mathcal F_\eps,\mathbb P_{\mathcal F_\eps})-\rho_\infty(\eps)\Bigr)\xRightarrow{\ d\ } 0.
\]
If $|J(\eps)|>1$, then
\[
\sqrt n\Bigl(\rho_n(\mathcal F_\eps,\mathbb P_{\mathcal F_\eps})-\rho_\infty(\eps)\Bigr) \ \xRightarrow{\ d\ }\ \rho_\infty(\eps)\max_{j\in J(\eps)} G_j,
\]
where $\boldsymbol G=(G_j)_{j\in J(\eps)}$ is a centered Gaussian vector with covariance matrix $\Sigma_\eps$.
\end{theorem}

\begin{proof}
By~\cref{ass:simtri}, the perturbed family $\mathcal F_\eps$ is similar to the upper-triangular family $\mathcal T_\eps=\{T_i(\eps)\}_{i=1}^m$, and the random spectral radii coincide. The stated LLN and CLT therefore follow directly from~\cref{thm:LLN gen diag,thm:clt_general,Cor:genericCLT} applied to $\mathcal T_\eps$.
\end{proof}

We now show how the perturbation expansions from~\cref{lem:RS-diagonal,cor:log-modulus} translate into expansions of the asymptotic parameters $\rho_\infty(\eps)$ and $\sigma_\infty(\eps)$.

\begin{assumption}\label{ass:persist_unique}
Assume that there exists an index $j^\star\in\{1,\dots,d\}$ such that
\[
\mu_{j^\star}(0)>\max_{j\ne j^\star}\mu_j(0).
\]
By continuity of the functions $\eps\mapsto \mu_j(\eps)$, the same maximizer $j^\star$ persists for all sufficiently small $|\eps|$, and we keep this label fixed in the sequel.
\end{assumption}

\begin{corollary}\label{cor:rhoinf_eps}
Assume~\cref{ass:simtri} and~\cref{ass:persist_unique}. Then, for $|\eps|$ sufficiently small,
\[
\log \rho_\infty(\eps) = \sum_{i=1}^m p_i \log |\lambda_{j^\star}^{(i)}(\eps)| =
\log \rho_\infty(0) + \eps \sum_{i=1}^m p_i\,\Re \left(\frac{a_{i,j^\star}}{\lambda_{j^\star}^{(i)}(0)}\right) + O(\eps^2),
\]
where $a_{i,j^\star}:=u_{i,j^\star}^*\,\Delta^{(i)}\,v_{i,j^\star}$, with $u_{i,j^\star},v_{i,j^\star}$ denoting normalized left/right eigenvectors of $A_i$
associated with $\lambda_{j^\star}^{(i)}(0)$. Equivalently,
\[
\rho_\infty(\eps)= \rho_\infty(0)\,
\exp \left( \eps \sum_{i=1}^m p_i\,\Re \left(\frac{a_{i,j^\star}}{\lambda_{j^\star}^{(i)}(0)}\right) +O(\eps^2) \right).
\]
In the diagonal case, where $u_{i,j^\star}=v_{i,j^\star}=e_{j^\star}$, this reduces to
\[
\log \rho_\infty(\eps) = \log \rho_\infty(0) + \eps \sum_{i=1}^m p_i\,\Re\!\left(\frac{\Delta^{(i)}_{j^\star j^\star}}{\lambda_{j^\star}^{(i)}}\right) +O(\eps^2).
\]
\end{corollary}

\begin{proof}
By~\cref{ass:persist_unique}, continuity implies that for all sufficiently small $|\eps|$ the maximum defining $\rho_\infty(\eps)$ is attained uniquely at $j^\star$, hence
\[
\log \rho_\infty(\eps)=\sum_{i=1}^m p_i \log |\lambda_{j^\star}^{(i)}(\eps)|.
\]
Applying~\cref{cor:log-modulus} to each $A_i(\eps)$ and summing with weights $p_i$
yields the expansion.
\end{proof}

\begin{corollary}\label{cor:sigmainf_eps}
Assume~\cref{ass:simtri} and~\cref{ass:persist_unique}. Then, for $|\eps|$ sufficiently small,
\[
\sigma_\infty^2(\eps) = \rho_\infty(\eps)^2 \operatorname{Var}_{\boldsymbol p}\bigl(\log |\lambda_{j^\star}^{(i)}(\eps)|\bigr),
\]
and, in particular, $\sigma_\infty(\eps)=\sigma_\infty(0)+O(\eps)$.
\end{corollary}

\begin{proof}
The formula for $\sigma_\infty(\eps)$ is exactly the unique-maximizer variance formula from~\cref{thm:pert_strong}. For $i=1,\dots,m$, set
\[
x_i(\eps):=\log |\lambda_{j^\star}^{(i)}(\eps)|,
\]
and let $X_\eps$ be the discrete random variable taking the value $x_i(\eps)$ with probability $p_i$. By~\cref{cor:log-modulus}, each $x_i(\eps)$ admits an expansion of the form
\[
x_i(\eps)=x_i(0)+\eps\,b_i+O(\eps^2).
\]
Hence
\[
\operatorname{Var}(X_\eps)
=
\operatorname{Var}(X_0)+O(\eps)
\]
whenever $\operatorname{Var}(X_0)>0$. If instead $\operatorname{Var}(X_0)=0$, then $x_i(0)$ is constant on the support of $\boldsymbol p$, and therefore
\[
\operatorname{Var}(X_\eps)
=
\eps^2\operatorname{Var}_{\boldsymbol p}(b_i)+O(\eps^3).
\]
In either case,
\[
\operatorname{Var}(X_\eps)^{1/2}
=
\operatorname{Var}(X_0)^{1/2}+O(\eps).
\]
Together with~\cref{cor:rhoinf_eps}, which gives $\rho_\infty(\eps)=\rho_\infty(0)+O(\eps)$, this yields
\[
\sigma_\infty(\eps)=\sigma_\infty(0)+O(\eps).
\]
\end{proof}

\section{Numerical experiments} \label{sec:numerics}

In this section we present empirical evidence for noncommuting families sampled directly from continuous matrix ensembles. In particular, the experiments below do \textbf{not} start from a diagonal or triangular reference family and do \textbf{not} use a perturbative construction. The purpose of the section is therefore purely exploratory: we test whether the LLN/CLT/LDP phenomenology established earlier for commuting and triangular families is still visible in genuinely nonperturbative examples. Fix
\[
\cF=\{A_1,\dots,A_m\}\subseteq \mathbb C^{d\times d}
\]
and a probability vector $\mathbf p=(p_1,\dots,p_m)\in\Delta_m$. For an infinite i.i.d. index sequence
\[
\omega=(\omega_k)_{k\ge1}\in[m]^{\mathbb N},
\qquad
\mathbb P(\omega_k=i)=p_i,
\]
we write $\omega|_n:=(\omega_1,\dots,\omega_n)$ and
\[
\Pi(\omega|_n):=A_{\omega_n}\cdots A_{\omega_1},
\qquad
\rho_n(\omega):=\rho \bigl(\Pi(\omega|_n)\bigr)^{1/n}.
\]
Thus the family $\cF$ is sampled once and then kept fixed; the only randomness in $\rho_n$ comes from the path $\omega$, exactly as in the rest of the paper.

To quantify noncommutativity we use the normalized commutator:
\begin{equation} \label{eq:non_commutator_level}
\gamma(\cF)
:=
\frac{2}{m(m-1)}
\sum_{1\le i<j\le m}
\frac{\|[A_i,A_j]\|_F}{\|A_i\|_F\|A_j\|_F}.
\end{equation}
This is only an empirical quantity, but it is useful for comparing families of different sizes and scales. For $M$ independent trajectories $\omega^{(1)},\dots,\omega^{(M)}$, write
\[
\rho_n^{(b)}:=\rho_n(\omega^{(b)}),\qquad b=1,\dots,M,
\]
and define the empirical center, scaled empirical variance, and empirical tail probability as follows:
\begin{equation} \label{eq:empirical_estimators}
\hat m_n
:=
\frac1M\sum_{b=1}^M \rho_n^{(b)},
\qquad
\hat v_n
:=
n\,\widehat{\operatorname{Var}} \bigl(\rho_n^{(1)},\dots,\rho_n^{(M)}\bigr),
\qquad
\hat p_n(r)
:=
\frac1M\sum_{b=1}^M \mathbf 1_{\{\rho_n^{(b)}\ge r\}}.
\end{equation}
For sample-path plots we also use a long-horizon reference level
\[
\hat\rho_\star:=\hat m_{N_\star},
\]
computed at a large fixed $N_\star$. This is only an empirical reference level; unlike in the diagonal or triangular settings, no closed-form asymptotic center is assumed to be known here. The main figures use the following directly sampled dense families.

\smallskip
\noindent
\emph{Family A.} Three $3\times3$ matrices with independent lognormal entries, rescaled so that each individual spectral radius lies in $[1.15,1.85]$, with
\[
\mathbf p=(0.30,0.40,0.30),
\qquad
\gamma(\cF_A)\approx 0.487.
\]

\smallskip
\noindent
\emph{Family B.} Four $4\times4$ matrices with independent shifted-Gaussian entries $\mathcal N(0.25,0.7^2)$, again rescaled to a moderate spectral-radius range, with
\[
\mathbf p=(0.25,0.20,0.25,0.30),
\qquad
\gamma(\cF_B)\approx 0.654.
\]

\medskip
\noindent\textbf{LLN.}
\Cref{fig:sample_paths} shows $25$ individual trajectories of $\rho_n(\omega)$ for Family~A. The paths fluctuate visibly at short lengths, but by $n\approx 200$ they have already entered a narrow band around the empirical reference level $\hat\rho_\star$. This is the same concentration mechanism that underlies the almost-sure law of large numbers in the triangular setting, now observed for a genuinely dense noncommuting family.

\begin{figure}[ht!]
  \centering
  \includegraphics[width=0.7\linewidth]{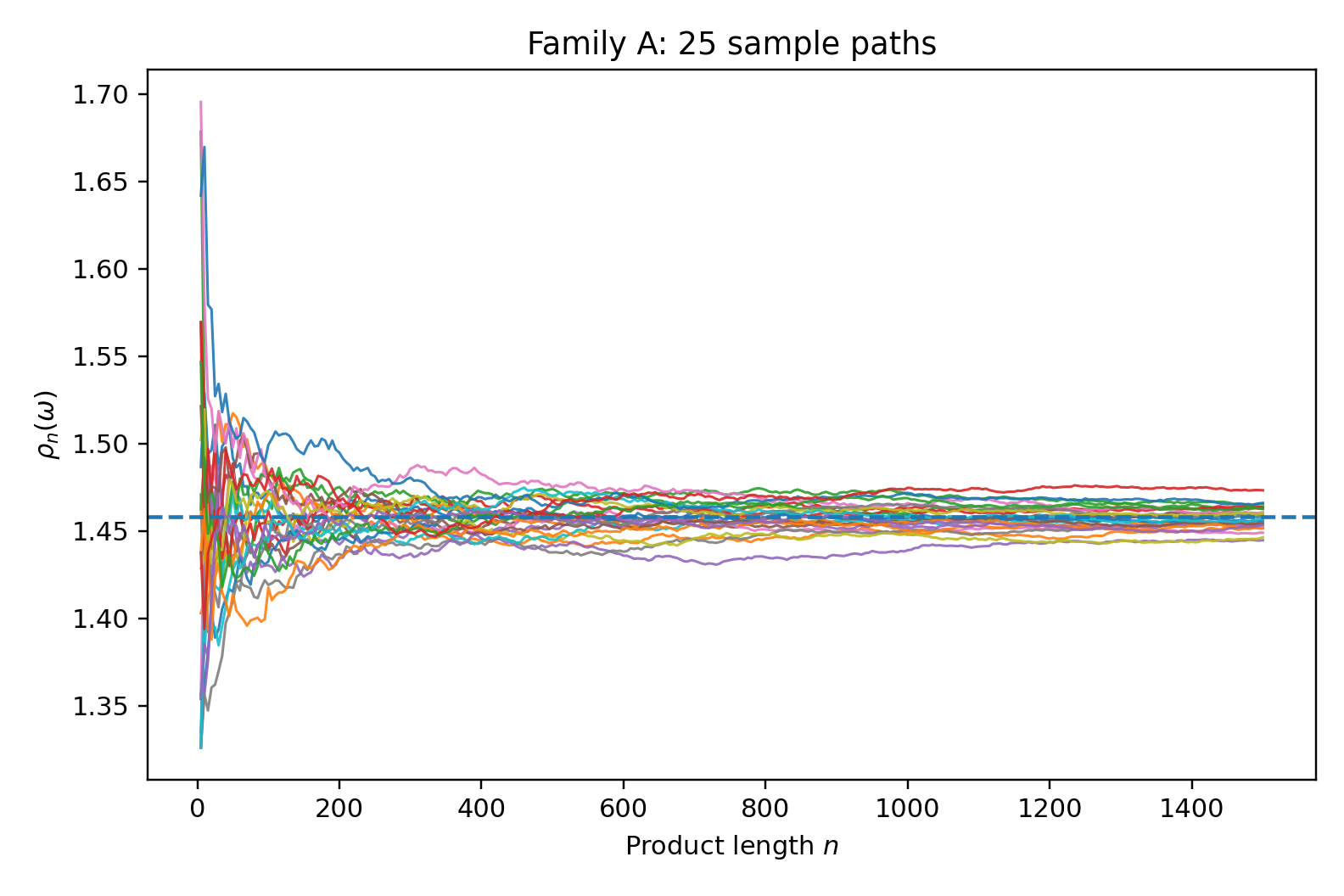}
  \caption[Sample paths of \(\rho_n\) for a generic dense family]%
  {\protect\parbox[t]{.93\textwidth}{%
  Twenty-five independent paths $\rho_n(\omega)$ for Family~A. The dashed line is the empirical reference level $\hat\rho_\star=\hat m_{1500}\approx1.4584$, estimated from $8000$ independent trajectories.}}
  \label{fig:sample_paths}
\end{figure}

\medskip
\noindent\textbf{CLT.}
To test the shape of the fluctuations, we form the standardized empirical variable
\begin{equation} \label{eq:empirical_standardization}
Z_n^{(b)}
:=
\frac{\sqrt n\,\bigl(\rho_n^{(b)}-\hat m_n\bigr)}{\sqrt{\hat v_n}},
\qquad b=1,\dots,M.
\end{equation}
\Cref{fig:qq_plots} displays QQ-plots of $Z_n^{(b)}$ against standard Gaussian quantiles for both Family~A and Family~B, at two values of $n$. The agreement is clear in all four panels and improves slightly as $n$ increases, which is consistent with a Gaussian fluctuation picture even though no general theorem is presently available for these examples.

\begin{figure}[ht!]
  \centering
  \includegraphics[width=0.7\linewidth]{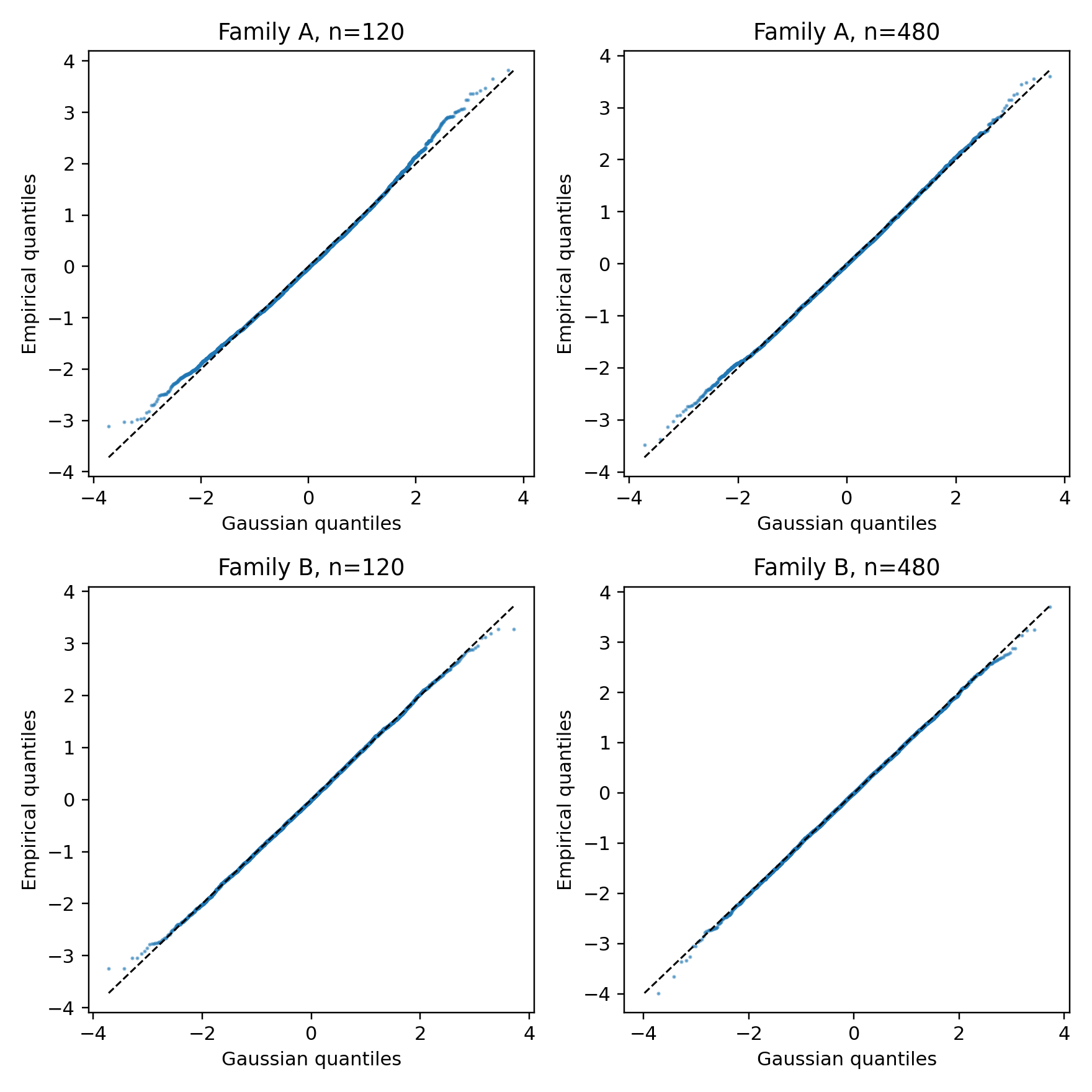}
  \caption[QQ-plots for empirical CLT diagnostics]%
  {\protect\parbox[t]{.93\textwidth}{%
  QQ-plots of the standardized samples $Z_n^{(b)}$ from~\eqref{eq:empirical_standardization} against $\mathcal N(0,1)$ quintiles, with $M=5000$ trajectories per panel. Top row: Family~A. Bottom row: Family~B. The same empirical center $\hat m_n$ and scaled variance $\hat v_n$ from~\eqref{eq:empirical_estimators} are used for the standardization.}}
  \label{fig:qq_plots}
\end{figure}

\medskip
\noindent\textbf{LDP.}
Finally,~\Cref{fig:ldp_tails} investigates fixed-threshold tail events. For several \emph{absolute} thresholds $r$, chosen once from runs and then held fixed for all $n$, we plot
\[
\log \hat p_n(r)
\quad\text{against}\quad n.
\]
In both Family~A and Family~B the resulting curves are close to linear over the displayed range, which is the qualitative behavior one would expect from a large-deviation principle. Since no explicit rate function is known in this setting, the dashed lines in the figure are simple least-squares fits.

\begin{figure}[ht!]
  \centering
  \includegraphics[width=\linewidth]{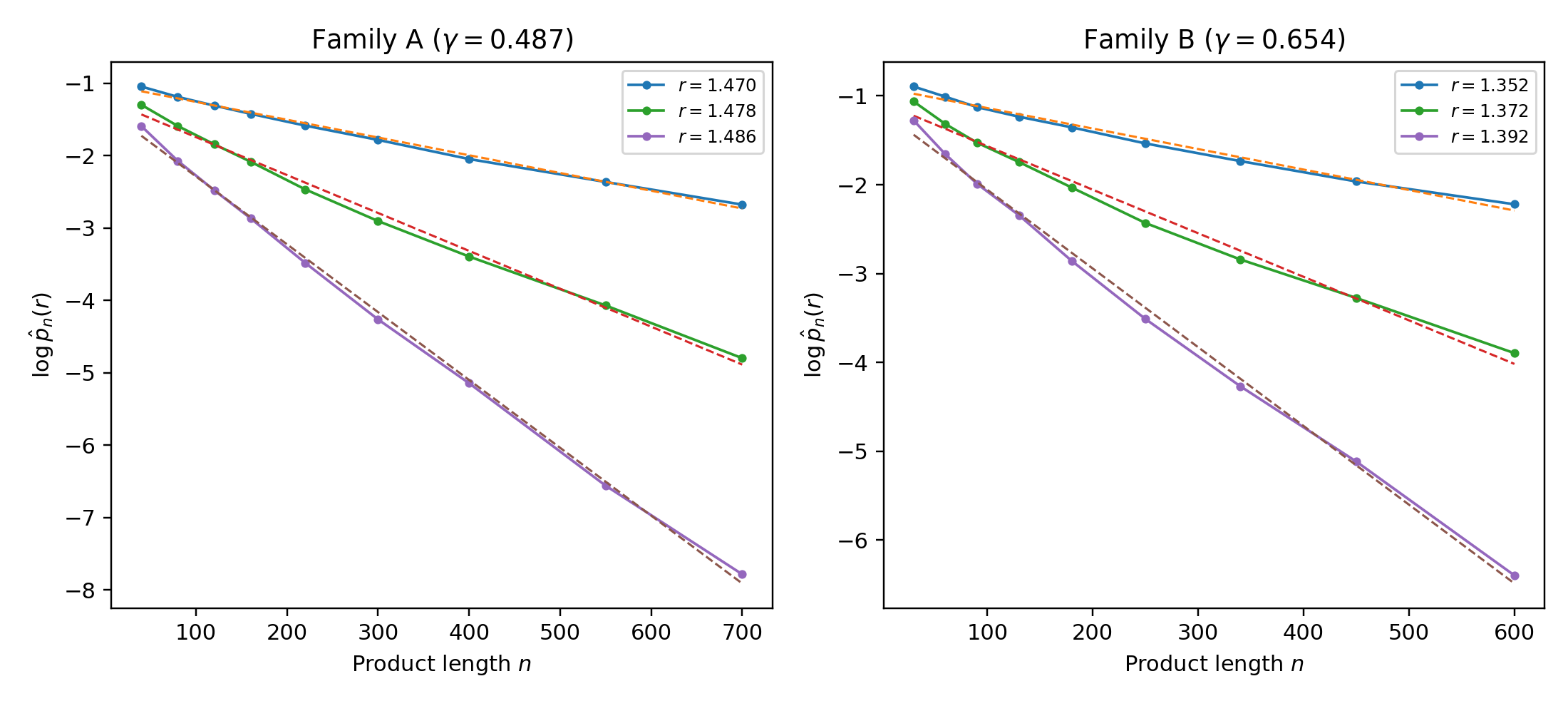}
  \caption[Empirical fixed-threshold tails for generic dense families]%
  {\protect\parbox[t]{.93\textwidth}{%
  Empirical fixed-threshold tails $\log \hat p_n(r)$ for Families~A and~B, based on $M=12000$ trajectories at each value of $n$. The thresholds are absolute constants chosen once from pilot runs and then kept fixed throughout the plot: $r\in\{1.470,1.478,1.486\}$ for Family~A and $r\in\{1.352,1.372,1.392\}$ for Family~B. The dashed lines are least-squares fits and are included only to highlight the approximately linear decay.}}
  \label{fig:ldp_tails}
\end{figure}

\section{Conclusion} 

In this article, we introduced the \emph{random spectral radius}, a new joint spectral characteristic associated with a bounded family of matrices. We established rigorous asymptotic results, including a law of large numbers, central limit theorems, finite-size asymptotic expansions, and a large deviation principle, for commuting and triangular families, together with perturbative stability results for nearby families.

The numerical experiments for generic dense families suggest that several of these asymptotic phenomena may persist well beyond the structured setting treated theoretically here; however, providing a rigorous theory for fully noncommuting families remains an open and challenging problem.

Another important class of matrix families is that of nonnegative matrices. For a nonnegative matrix, Perron--Frobenius theory guarantees that the spectral radius is a nonnegative eigenvalue; under stronger irreducibility or primitivity assumptions, this leading eigenvalue is simple and dominant. It would therefore be natural to investigate the random spectral radius in this setting as well. For random products of nonnegative transition matrices, the logarithmic growth rate is closely related to the top Lyapunov exponent and hence to typical growth or decay under randomness.

A natural extension of the present framework concerns \emph{Markovian switching}, where the index sequence $\sigma = (\sigma_1, \sigma_2, \ldots)$ is generated by a Markov chain with transition matrix $P$ rather than by i.i.d.\ sampling. While the approach of~\cite{CMS21} addresses asymptotic growth rates in this setting, the question of whether the CLT and, more ambitiously, the Edgeworth-type refinements developed here extend remains open. The loss of independence among the $Z_k$ random variables introduces correlations that may affect both the asymptotic variance and the higher-order correction terms.

For diagonal and triangular families, the large deviation principle established in~\cref{subsec:large_deviations} quantifies the probability of rare events, for instance the likelihood that $\rho_n$ approaches the JSR or the LSR, thereby directly addressing the motivating questions posed in the introduction. Extending the LDP to general noncommuting families remains an open problem. Any progress in these directions would represent an important step toward a comprehensive understanding of the random spectral radius for general matrix families.

\subsection*{Acknowledgements}

We thank Kalle Koskinen for the careful reading of the manuscript and valuable comments. 

Nicola Guglielmi and Francesco Paolo Maiale acknowledge that their research was supported by funds from the Italian 
MUR (Ministero dell'Universit\`a e della Ricerca) within the 
PRIN 2022 Project ``Advanced numerical methods for time dependent parametric partial differential equations with applications'' and the PRIN-PNRR Project ``FIN4GEO''.
Nicola Guglielmi is also affiliated to the INdAM-GNCS (Gruppo Nazionale di Calcolo Scientifico).

\begin{appendices}

\makeatletter
\renewcommand{\appendixname}{}
\renewcommand{\thesection}{\Alph{section}}
\renewcommand{\@seccntformat}[1]{%
  \csname the#1\endcsname\quad
}
\makeatother

\section{Additional numerical experiments}\label{app:numerics}

This appendix collects further empirical evidences for the directly sampled dense families introduced in~\cref{sec:numerics}. No diagonal benchmark and no perturbative reference model is used anywhere in this appendix; all quantities are computed directly from Monte Carlo samples of the random spectral radius.

\subsection{Approximate Gaussian profiles}

\Cref{fig:generic_histograms} shows histograms of $\rho_n$ for Families~A and~B at the two representative lengths $n = 80$ and $n=400$. In each panel we superimpose the Gaussian density with the same empirical mean and variance as the histogram. The fit is already good at $n=80$ and becomes visibly tighter by $n=400$, which complements~\cref{fig:qq_plots}.

\begin{figure}[ht!]
  \centering
  \includegraphics[width=.85\linewidth]{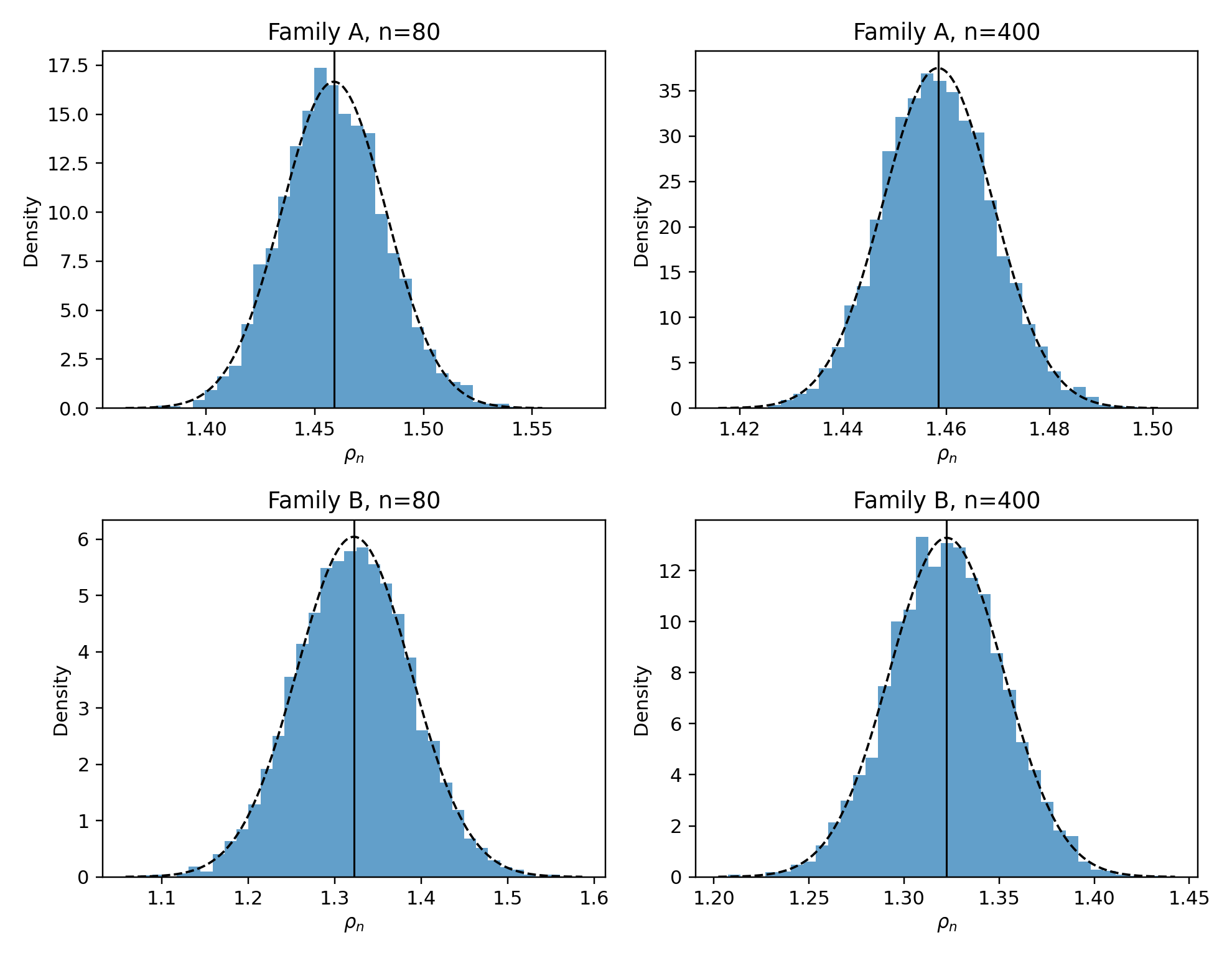}
  \caption[Histograms for Fam.~A and~B]%
  {\protect\parbox[t]{.93\textwidth}{%
  Histograms of $\rho_n$ for Fam.~A and~B at $n=80$ and $n=400$, each based on $M=6000$ trajectories. The dashed curve in each panel is the Gaussian density with the same empirical mean and variance as the corresponding histogram.}}
  \label{fig:generic_histograms}
\end{figure}

\subsection{Stabilization of the empirical center and variance}

The quantities $\hat m_n$ and $\hat v_n$ defined in~\eqref{eq:empirical_estimators} provide a useful summary of the empirical LLN/CLT behavior.~\Cref{fig:mean_variance_generic} shows that in both Families~A and~B the center $\hat m_n$ stabilizes quickly, while the scaled variance $n\,\widehat{\operatorname{Var}}(\rho_n)$ approaches an approximately constant level. This is exactly the pattern one would expect if
\[
\rho_n \approx \rho_\star + \frac{\sigma_\star}{\sqrt n}\,G
\]
for a suitable effective center $\rho_\star$ and fluctuation scale $\sigma_\star$.

\begin{figure}[ht!]
  \centering
  \includegraphics[width=.95\linewidth]{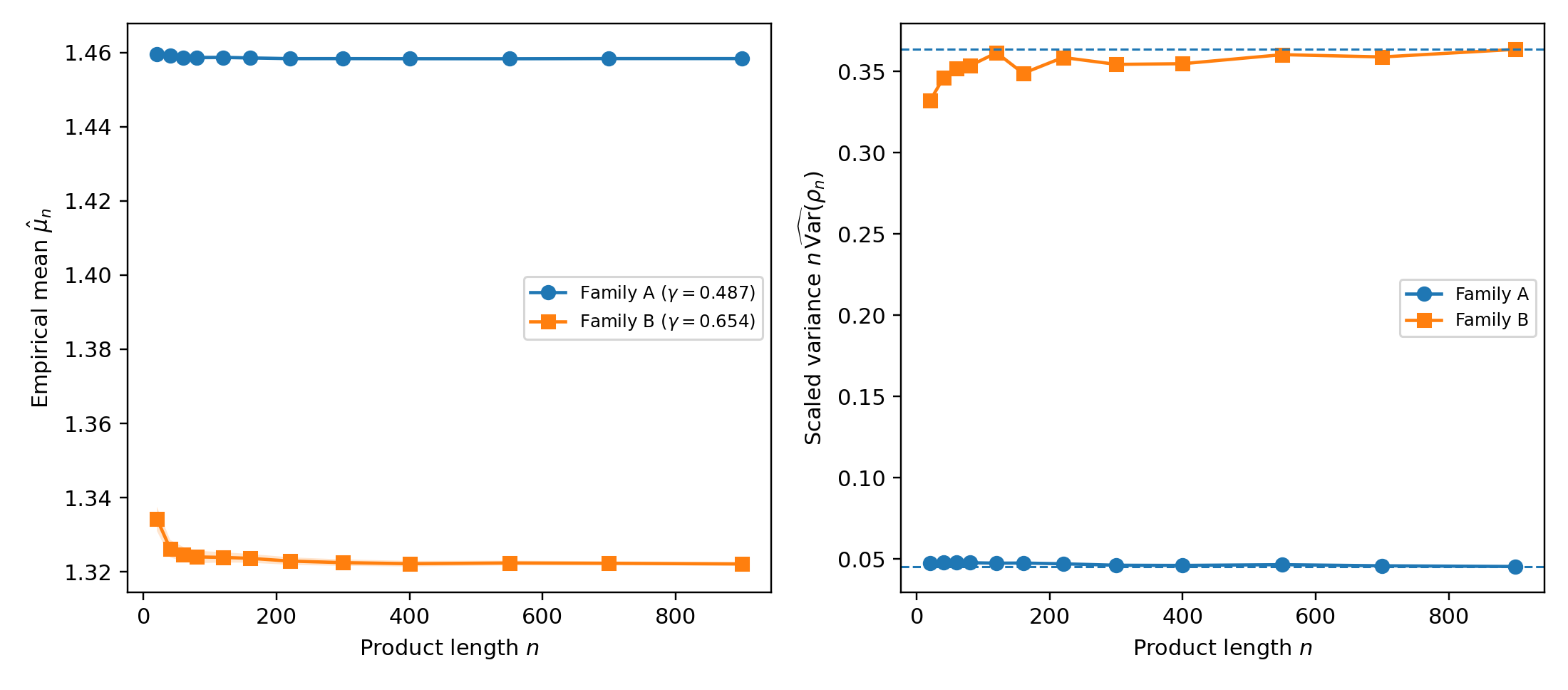}
  \caption[Empirical center and scaled variance]%
  {\protect\parbox[t]{.93\textwidth}{%
  \textbf{Left}: empirical mean $\hat m_n$ for Fam.~A and~B; the shaded bands are $95\%$ MC confidence bands for the mean. \textbf{Right}: scaled variance $\hat v_n=n\,\widehat{\operatorname{Var}}(\rho_n)$. In both the center stabilizes rapidly and the scaled variance flattens to an approximately constant level.}}
  \label{fig:mean_variance_generic}
\end{figure}

\subsection{A higher-dimensional family}

The next experiment shows that the same qualitative behavior persists in larger dimensions. We sample a third family directly from a dense lognormal ensemble:

\smallskip
\noindent
\emph{Family C.} Five $5\times5$ matrices with independent lognormal entries, rescaled to a moderate spectral-radius range, with
\[
\mathbf p=(0.15,0.20,0.25,0.20,0.20),
\qquad
\gamma(\cF_C)\approx 0.389.
\]

\Cref{fig:large_matrices} shows $15$ sample paths together with a QQ-plot at $n=500$. The concentration of the paths and the near-linear QQ-plot are very similar to what we observed in the lower-dimensional families.

\begin{figure}[ht!]
  \centering
  \includegraphics[width=.95\linewidth]{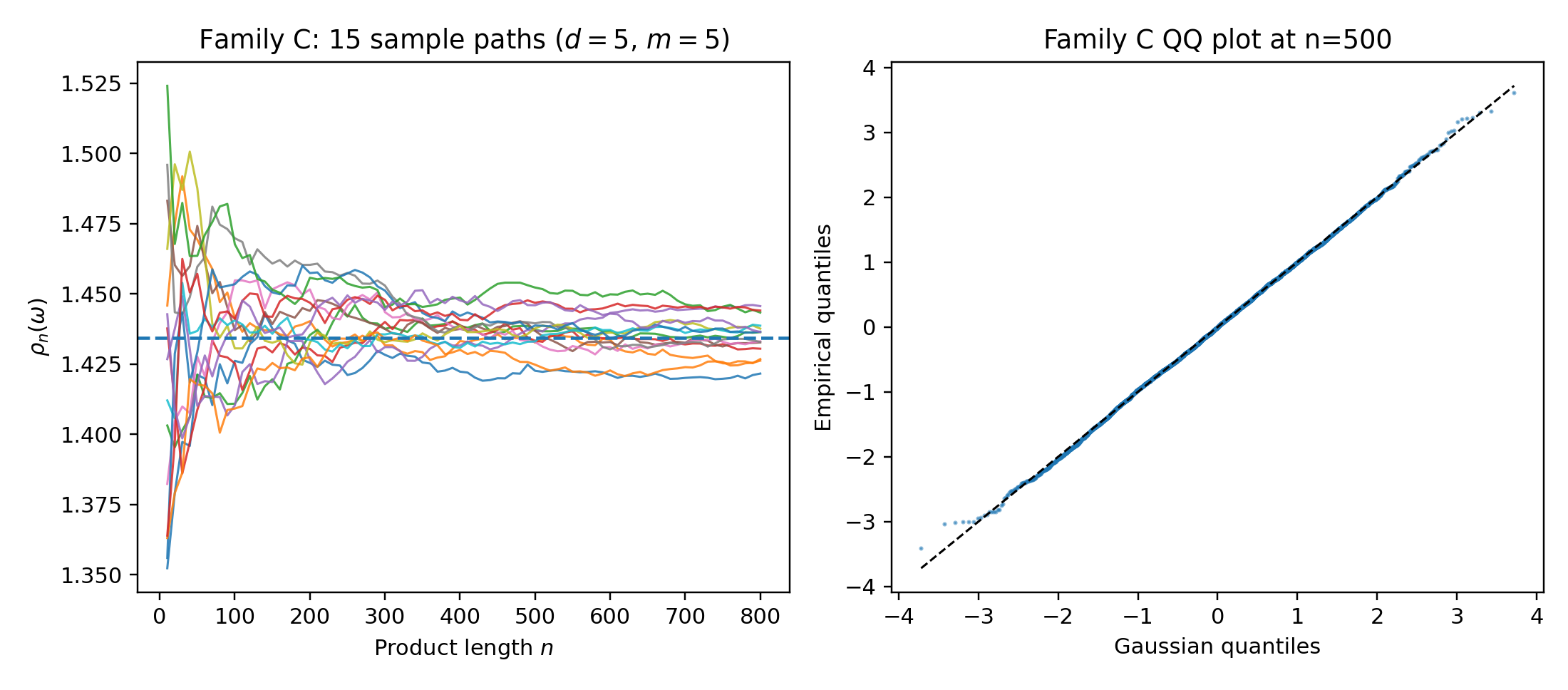}
  \caption[Higher-dimensional generic family]%
  {\protect\parbox[t]{.93\textwidth}{%
  \textbf{Left}: $15$ sample paths of $\rho_n(\omega)$, with dashed line $\hat m_{800}\approx1.4344$. \textbf{Right}: QQ-plot of the standardized variable $Z_{500}^{(b)}$ from~\eqref{eq:empirical_standardization}, based on $M=5000$ trajectories.}}
  \label{fig:large_matrices}
\end{figure}

\subsection{Dependence on the sampling law}

For $\cF$ fixed, the law of the switching sequence enters only through the probability $\mathbf p\in\Delta_m$. To visualize this dependence in a generic noncommuting example,~\cref{fig:simplex} plots the empirical landscape
\[
\mathbf p\longmapsto \hat m_{250}(\mathbf p)
\]
over the simplex $\Delta_3$ for Family~A. Each grid value is estimated independently by Monte Carlo, using $M=1200$ trajectories on a uniform simplex mesh of size $0.1$. The resulting landscape is smooth and strongly anisotropic; on the displayed grid the smallest and largest values occur at the vertices $(0,1,0)$ and $(1,0,0)$, respectively. We emphasize, however, that this figure is purely empirical and does \textbf{not} assert convexity, variational formulas, or any exact relation to the JSR/LSR in the general setting.

\begin{figure}[ht!]
  \centering
  \includegraphics[width=.60\linewidth]{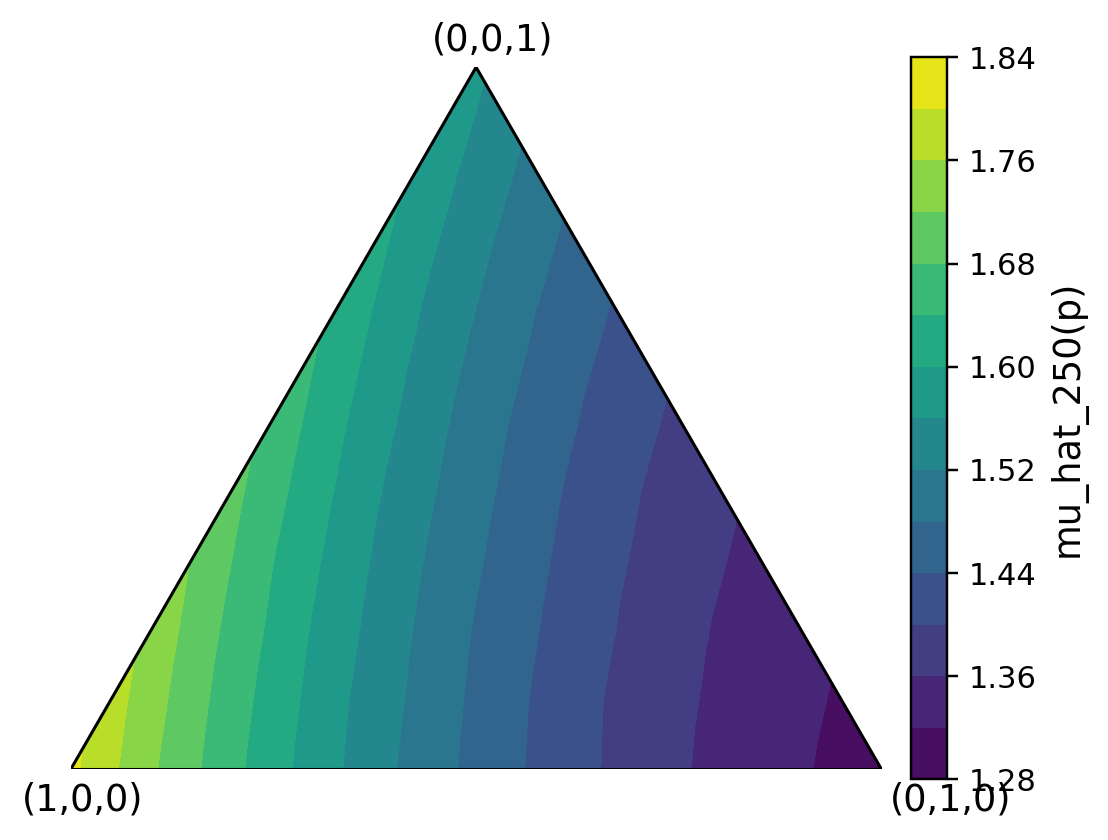}
  \caption[Empirical dependence on \(\mathbf p\) over the simplex]%
  {\protect\parbox[t]{.93\textwidth}{%
  Empirical landscape $\mathbf p\mapsto \hat m_{250}(\mathbf p)$ for Family~A over the probability simplex $\Delta_3$.}}
  \label{fig:simplex}
\end{figure}

\end{appendices}

\bibliographystyle{siam} 
\bibliography{references.bib}

\end{document}